\newcommand{\mtrx}[1]{\begin{pmatrix} #1 \end{pmatrix}}
\crefname{figure}{Figure}{Figures}
\crefname{table}{Table}{Tables}
\crefname{lemma}{Lemma}{Lemmas}
\crefname{section}{Section}{Sections}
\renewcommand{\(}{\left(}
\renewcommand{\)}{\right)}
\newcommand{\RN}{\mathbb{R}^N}
\definecolor{grau}{gray}{.5}
\theoremstyle{plain}
\newtheorem{theorem}{Theorem}
\newtheorem{lemma}[theorem]{Lemma}
\newtheorem{proposition}[theorem]{Proposition}
\newtheorem{corollary}[theorem]{Corollary}
\theoremstyle{definition}
\newtheorem{remark}[theorem]{Remark}
\newtheorem{remark*}{Remark}
\newtheorem{example}[theorem]{Example}
\renewcommand{\epsilon}{\varepsilon}
\renewcommand{\argmin}{\operatorname*{argmin}}
\newcommand{\dint}[2]{{#1:#2}}
\newcommand{\RI}{\operatorname{R_{\text{ind}}}}
\title{Smoothing for signals with discontinuities \\
using higher order Mumford-Shah models}
\date{}
\author{  Martin Storath%
\thanks{Interdisciplinary Center for Scientific Computing (IWR), Universität Heidelberg, Germany, and 
Faculty of Applied Natural Sciences and Humanities, University of Applied Sciences Würzburg-Schweinfurt,
Germany}
 ~~~~~
 Lukas Kiefer%
	\thanks{Mathematical
Imaging Group, Universität Heidelberg, Germany}
\thanks{Department of Mathematics and Natural Sciences, Hochschule Darmstadt, Germany}
  ~~~~~
    Andreas Weinmann$^\ddagger$%
 \thanks{
 Institute of Computational Biology, Helmholtz Zentrum München, Germany}
}
\begin{document}
\maketitle

\begin{abstract}
Minimizing the Mumford-Shah functional is frequently used for smoothing
	signals or time series with discontinuities.
A significant limitation of the standard Mumford-Shah model is that linear trends -- and in general polynomial trends --
in the data are not well preserved. 
This can be improved by building on  splines of higher order
which leads to higher order Mumford-Shah models.
In this work, we study these models in the univariate situation:
we discuss important differences to the first order Mumford-Shah model,
and we obtain uniqueness results for their solutions.
As a main contribution, we derive fast minimization algorithms for Mumford-Shah models of arbitrary orders.
We show that the worst case complexity of all proposed schemes is quadratic in the length of the signal.
Remarkably, they thus achieve the worst case complexity of the fastest solver for the piecewise constant Mumford-Shah model (which is the simplest model of the class).
Further, we obtain stability results for the proposed algorithms.
We complement these results with a
numerical study. 
Our reference implementation processes signals with more than 10,000 elements in less than one second.

\end{abstract}

{ \small \textbf{Keywords:} piecewise smooth approximation, discontinuous signals, complexity penalized estimation,
changepoint estimation, segmented least squares, spline smoothing,  Mumford-Shah model, Potts model, Blake-Zisserman model.

\textbf{AMS subject classification (MSC2010):}
65D10, % Smoothing, curve fitting
65K05, % Mathematical programming methods
62G08, % Nonparametric regression
65K10, % Optimization and variational techniques
65D07.  % Splines
}

\section{Introduction}

Smoothing is an important processing step when working with measured signals or time series.
For signals without discontinuities, it is standard to use smoothing splines for this task.
In various applications however, the signals possess discontinuities.
Such applications are, for example, the cross-hybridization of DNA \cite{snijders2001assembly,drobyshev2003specificity,hupe2004analysis}, 
the reconstruction of brain stimuli \cite{winkler2005don}, 
single-molecule fluorescence resonance energy transfer \cite{joo2008advances},
cellular ion channel functionalities \cite{hotz2013idealizing},
photo-emission spectroscopy \cite{frick2014multiscale}
and the rotations 
of the bacterial flagellar motor 
\cite{sowa2005direct}; see also 
\cite{little2011generalized,little2011generalized2, frick2014multiscale} for further examples.
Frequently, it is important to preserve the discontinuities since they typically indicate a significant change.
Unfortunately, the locations of the discontinuities are in general
 unknown; they have to be estimated along with the signal.

One approach to this problem is to estimate the discontinuities locally and adapt the corresponding fitting operator 
	to the local situation in an explicit way; for instance 	\cite{arandiga2005interpolation,amat2017new,harten1996multiresolution}.
Other approaches use variational methods: an energy functional is considered and a corresponding minimizer yields a smoothed approximation to the data. Examples for discontinuity preserving methods  are total variation (TV)/Rudin-Osher-Fatemi models \cite{rudin1992nonlinear} which yield a kind of piecewise constant approximation. To account for linear and higher order trends in the data, higher order TV methods have been proposed  \cite{chambolle1997image,Bredies10}. TV models and their higher order extensions, however, do not explicitly incorporate the notion of discontinuities and segment boundaries.
A model taking these notions explicitly into account is the Mumford-Shah model.
It simultaneously estimates a discontinuity set and a corresponding piecewise smooth approximation \cite{mumford1989optimal}.
Its piecewise constant variant is also known as the Potts model \cite{potts1952some,geman1984stochastic,winkler2003image},
or as the Chan-Vese model for the  case of two phases \cite{chan2001active}.
Mumford-Shah and Potts models
are classical models for discontinuity preserving smoothing and for segmentation
\cite{blake1987visual,geman1984stochastic, mumford1985boundary, winkler2003image}.
More recent applications are smoothing of video sequences \cite{strekalovskiy2014real} and segmentation with shape priors \cite{isack2016hedgehog}. They are also used for stabilizing the reconstructions of inverse problems
\cite{rondi2001enhanced,rondi2008regularization, jiang2014regularizing,ramlau2007mumford,ramlau2010regularization, weinmann2015iterative}.
Corresponding existence results on minimizers were established in \cite{artina2013linearly,fornasier2013existence}.
	Besides the classical $\ell^2$-based models, $\ell^p$-based variants with $p\geq 1$ \cite{fornasier2010iterative,hohm2015algorithmic,kolmogorov2015total, friedrich2008complexity,weinmann2015l1potts,storath2017jump} and manifold-valued data spaces \cite{wang2005dti,weinmann2014mumford, storath2017jump}
	have been considered.
Discretizations have been studied in \cite{chambolle1995image,chambolle1999finite}.
	 Mumford-Shah and Potts problems  are known to be NP hard in the  multivariate case and in the univariate inverse problem setup \cite{veksler1999efficient,boykov2001fast,weinmann2015iterative}.
Approximate solution strategies are for example based on graduated non-convexity \cite{blake1987visual, blake1989comparison}, 
	approximation by elliptic functionals \cite{ambrosio1990approximation, rondi2001enhanced,bar2004variational}, 
	graph cuts \cite{boykov2001fast},
	active contours \cite{tsai2001curve}, convex relaxations \cite{strekalovskiy2012convex},
	iterative thresholding algorithms \cite{fornasier2010iterative}, ADMM splitting schemes  \cite{storath2014fast,hohm2015algorithmic},
	and iterative Potts minimization \cite{weinmann2015iterative,kiefer2018iterative}.
    In the univariate case, dynamic programming strategies yield exact solutions as discussed in more detail later on.

A significant limitation of the classical Mumford-Shah model is that it does not well preserve locally
linear or polynomial trends in the data.
Instead, it tends to produce spurious discontinuities when the slope of the signal is
too high, which has been termed the \enquote{gradient limit effect} by Blake and Zisserman~\cite{blake1987visual}.
The reason for this is that it penalizes deviations from a piecewise constant spline.
The preservation of linear or polynomial trends
can be accomplished by considering \emph{higher order Mumford-Shah models}.
They penalize 
	 the deviation from a piecewise polynomial instead of the deviation from a piecewise constant function.
	The multivariate discrete {higher order Mumford-Shah and Potts models} are particularly interesting in image processing for edge preserving smoothing of images with locally linear or polynomial trends; for example, second order methods are applied for piecewise approximation and segmentation of images 	\cite{wang2013level, zanetti2017piecewise,chen2017general, zanetti2016numerical} or regularization of flow fields \cite{yang2015dense,fortun2017fast}. The univariate discrete models are particularly interesting for smoothing time series with discontinuities; examples with biological applications are for instance \cite{nord2017catch, sowa2005direct}.
 Further they arise as subproblems in univariate and multivariate splitting schemes \cite{storath2014fast,hohm2015algorithmic, weinmann2015iterative,kiefer2018iterative}.	

We intend to systematically study higher order Mumford-Shah models, where, in this paper,  
	we consider the univariate and discrete situation.
It is given by the minimization problem
\begin{equation}\label{eq:penalizedProblem}\tag{$\Pc_{k,\beta,\gamma}$}
	(u^*, \Ic^*) = \argmin_{u \in \R^N,~\Ic \text{ partition} }~ \|  u - f \|_2^2 +  \sum_{I \in \Ic}   \beta^{2k} \| \nabla^k u_I \|_2^2  + \gamma \, | \Ic |.
\end{equation}
Here, $f \in \R^N$ denotes the given data, 
and the minimum is computed with respect to the target variables
$u$ and $\Ic,$ 
where $u$ is a discrete univariate signal of length $N$
and $\Ic$ is a partition of the domain $\Omega = \{1, \ldots, N\}.$
(The connection between $u$ and $\Ic$ is discussed in detail later in \cref{sec:basic_proporties}.)
The symbol $\nabla^k u_I$ denotes the $k$-th order finite difference operator
applied to the vector $u$ restricted to the \enquote{interval} $I$ of a partition~$\Ic$ of the domain. 
The functional value comprises a cost term for the data deviation,
a cost term for the inner energy of a spline on the single segments, and a cost term for the complexity of the partition (measured in terms of the number of segments $|\Ic|$).
The minimizing signal $u^*$ is a piecewise $k$-th order discrete spline approximation to $f$ with elasticity parameter $\beta$ 
which has discontinuities or breakpoints at the boundaries given by the partition $\Ic^*.$
Choosing a large parameter value $\beta$ leads to stronger smoothing on the segments, and choosing a large parameter value $\gamma$ leads to less segments.
It is interesting to look at the cases for very large parameters of $\beta$ and $\gamma.$
As the kernel of $\nabla^k$ consists of  polynomials of maximum degree $k-1,$
the limit situation $\beta \to \infty$ 
can be written as
\begin{equation}\label{eq:penalizedProblemPoly}\tag{$\Pc_{k,\infty,\gamma}$}
	\argmin_{u, \Ic}~ \|  u - f \|_2^2   + \gamma \, | \Ic |, \quad \text{s.t. $u_I$ is a polynomial of maximum degree $k-1$ for all $I \in \Ic.$}
\end{equation}
As the case $k=1$ is known as the Potts model (as a tribute to the work of R.~Potts \cite{potts1952some}) we  refer to \eqref{eq:penalizedProblemPoly}  as \emph{higher order Potts model}.
On the other hand, for sufficiently large $\gamma,$
it 
reduces to the (discrete) $k$-th order spline approximation
\begin{equation}\label{eq:splineProblem}\tag{$\Pc_{k,\beta,\infty}$}
	\argmin_{u}~\|  u - f \|_2^2   +   \beta^{2k} \| \nabla^k u \|_2^2,
\end{equation}
which is a classical method for smoothing data; see \cite{whittaker1922new, wahba1990spline}.
Depending on the application, 
there are different points of view for 
 Mumford-Shah-type models:
On the one hand, the optimal partition $\Ic^*$
can serve as a basis for identifying segment neighborhoods \cite{auger1989algorithms}
and as an indicator for changepoints of the signal \cite{killick2012optimal}.
On the other hand, the corresponding optimal signal $u^*$ can serve as a smoother 
for a signal with discontinuities \cite{blake1987visual,winkler2002smoothers}.

In the literature, the members of the higher order Mumford-Shah family
\eqref{eq:penalizedProblem} 
have been considered for the cases $k=1, 2$ 
and for $\beta < \infty$ or $\beta = \infty$ (strict piecewise polynomial model)
by various individual studies.
The classical (first order) Potts model $(\Pc_{1, \infty, \gamma})$ and closely related models were studied in various works; for example \cite{bruce1965optimum, liebscher1999potts, winkler2002smoothers, wittich2008complexity, killick2012optimal}. 
The strict piecewise linear model $(\Pc_{2, \infty, \gamma})$
was studied by Bellman and Roth~\cite{bellman1969curve}; see also
\cite{kleinberg2006algorithm}. We refer to it as affine Potts model or piecewise affine Mumford-Shah model. 
The first order problems $(\Pc_{1, \beta, \gamma})$ for arbitrary parameters $\beta$ 
have been studied in the seminal works of Mumford and Shah~\cite{mumford1985boundary,mumford1989optimal}.
(This motivates the denomination higher order Mumford-Shah model for the family \eqref{eq:penalizedProblem}.)
The same problem was studied at around the same time by Blake and Zisserman 
\cite{blake1987visual} under the name weak string model.
They  also introduced a second order extension, called the weak rod model  \cite{blake1987visual}.
This model is more general than the model $(\Pc_{2, \beta, \gamma})$ considered here
because it  has an extra penalty for discontinuities in the first derivative.
We refer to \cite{carriero2015survey,zanetti2016numerical} 
for a recent investigation of Blake-Zisserman models in 2D.
To our knowledge, the  models $(\Pc_{k, \beta, \gamma})$ have not been systematically studied  
for arbitrary orders $k.$

Although \eqref{eq:penalizedProblem}
is a non-convex problem it can be solved exactly by dynamic programming; see \cite{bellman1969curve, blake1989comparison,auger1989algorithms,winkler2002smoothers, jackson2005algorithm, friedrich2008complexity}.
 The state-of-the-art solver has worst case complexity 
$O(N^2 \phi(N))$ where $\phi(N)$ comprises the costs of computing 
a spline approximation error on an interval of maximum length $N$; see  \cite{winkler2002smoothers, kleinberg2006algorithm, friedrich2008complexity}.
Killick et al.~\cite{killick2012optimal} proposed a pruning strategy to accelerate the algorithm in a special yet practically relevant case:
if the expected number of segments $|\Ic^*|$ grows linearly in $N$
and if the expected log-likelihood fulfills certain estimates, detailed in~\cite{killick2012optimal},
the expected complexity is $\Oc_P(N \phi(N)).$
Another pruning scheme has been established in  \cite{storath2014fast}.
An algorithm for solving the first order Mumford-Shah problem for all parameters $\gamma$ simultaneously was proposed in~\cite{friedrich2008complexity}.
It is straightforward to devise an algorithm for $(\Pc_{k, \beta, \gamma})$ of complexity $\Oc(N^3).$
For the first order problem $(\Pc_{1, \beta, \gamma}),$
we proposed an $\Oc(N^2)$ algorithm \cite{hohm2015algorithmic} 
which utilizes a fast computation scheme
for the approximation errors proposed by Blake~\cite{blake1989comparison}. 
Unfortunately, as that scheme is based on algebraic recurrences,
a generalization to arbitrary orders of $k$ seems difficult.
By precomputations of moments one can achieve $\phi(N) = \Oc(1);$ see \cite{lemire2007better, friedrich2008complexity}.
Although that approach gives reasonable results for the low orders $k=1,2,$ 
it gets numerically unstable for higher orders.
A different dynamic programming approach was discussed in \cite{blake1987visual} which however only computes an approximate minimizer.

\paragraph{Contribution.}

This work deals with the analysis and with solvers for the
higher order Mumford-Shah and Potts problems~\eqref{eq:penalizedProblem}.
First, we discuss basic properties of higher order Mumford-Shah models,
we prove that the solutions are unique for almost all input data,
and we discuss connections with related models.
A main contribution is a fast non-iterative algorithm for minimizing the 
higher-order Mumford-Shah and Potts models of arbitrary order.
The proposed schemes are based on dynamic programming and recurrence relations. 
We prove that the proposed algorithms have the same 
worst case complexity as the state-of-the-art solver
for minimizing the (simpler) piecewise constant Mumford-Shah model,
i.e., their runtime grows quadratically with respect to the length of the signal.
Our reference implementation processes
signals of length over 10,000 in less than one second
on a standard desktop computer.
Further, we derive stability results for the proposed algorithms.
Eventually, we provide a numerical study 
where we in particular compare with the first order model 
with respect to runtime and reconstruction quality.

\paragraph{Organization of the paper.}

In Section~\ref{sec:higher_order}
we describe and discuss higher order Mumford-Shah and higher order Potts models.
In Section~\ref{sec:FastStableMumfordShahSolver},
we develop a fast solver for higher order Mumford-Shah problems
and for higher order Potts problems, and we analyze the stability.
A numerical study is given in Section~\ref{sec:numerical_results}.

\section{Higher order Mumford-Shah and Potts models}\label{sec:higher_order}

We start with some basic notations and definitions.
Our goal is to recover an 
unknown signal $g \in \R^N$
from its noisy samples 
\begin{equation}\label{eq:regressionModel}
	f_n = g_n + \eta_n, \quad\text{}n = 1,...,N,
\end{equation}
where the $\eta_n$ are independently distributed Gaussian random variables of zero mean and variance $\sigma^2.$
We write $\dint{l}{r}$ for a discrete \enquote{interval} from $l$ to $r$, i.e.
$\dint{l}{r} =\{l,l+1, \ldots r\}.$
It is convenient to use the Matlab-type notation $x_{I} = x_{l:r} = (x_l, x_{l+1}, \ldots, x_r)$
for indexing.
We say that $\Ic$ is a \emph{partition of $\Omega = \dint{1}{N}$ into intervals},
if  $I\cap J = \emptyset$ for all $I, J \in \Ic,$
if $\bigcup_{I \in \Ic} I = \Omega,$ 
and if all $I \in \Ic$ are discrete intervals  of the form $I= \dint{l}{r},$
with $1 \leq l \leq r \leq N.$
As we will only work with partitions into intervals here,
we briefly call $\Ic$ a partition.
Further, we use the notation $\|u\|_2^2 = \sum_{n=1}^N u_n^2$ for $u \in \R^N$
to denote the squared Euclidean length of $u.$

\subsection{First order Mumford-Shah models and the gradient limit effect}

Before considering higher order models, 
we first review some important properties of first order models.
The first order Mumford-Shah problem ($\Pc_{1, \beta, \gamma}$) on the discrete domain $\Omega$ can be written as
\begin{equation}\label{eq:firstOrderMS_discrete}
	(u^*, \Ic^*) = \argmin_{u \in \R^N,\,\Ic \text{ partition of }\Omega }~
	 \sum_{n=1}^N  (u_n - f_n)^2 + \beta^{2} \sum_{I \in \Ic}  \sum_{i =1}^{|I| - 1} ( (u_I)_{i+1} - (u_I)_i)^2  + \gamma \, | \Ic |.
\end{equation}
The two-fold minimization with respect to the signal $u$ and the partition $\Ic$ is instructive 
but cumbersome in practice.
We can remove
the explicit dependance  on the signal $u.$
The formulation in terms of partitions reads
\begin{equation}\label{eq:firstOrderMS_I}
	\Ic^* = \argmin_{\Ic \text{ partition of } \dint{1}{N} }  \sum_{I \in \Ic} \Big( \Ec^I + \gamma\Big),
	\quad \text{with } \Ec^I = \min_{v \in \R^{|I|}}~
\sum_{i =1}^{|I|}  (v_i - f_i)^2 + \beta^{2} \sum_{I \in \Ic}  \sum_{i =1}^{|I| - 1} (v_{i+1} - v_i)^2.
\end{equation}
Here $\Ec^I$ describes the error of the best first order discrete spline approximation on the interval $I.$
This formulation is typically used for derivation of algorithms based on dynamic programming 
\cite{blake1989comparison, friedrich2008complexity}.

For the first order model \eqref{eq:firstOrderMS_discrete}, it is possible to rewrite the functional without partitions.
The corresponding formulation reads
\begin{equation}\label{eq:firstOrderMS_u}
	u^* = \argmin_{u \in \R^N}~
	 \sum_{n=1}^N  (u_n - f_n)^2 +  \sum_{n=1}^{N-1} \min(\beta^{2} (u_{n+1} - u_n )^2, \gamma).
\end{equation}
This formulation is useful for derivation of algorithms based on iterative thresholding techniques \cite{fornasier2010iterative}.
The key property that makes the formulation in terms of $u$
possible is that, for the first order model, the signal $u^*$ and the partition $\Ic^*$  are 
equivalent in the sense that $\Ic^*$ can be recovered from $u^*$ 
and vice-versa.

\begin{figure}[t]
	\captionsetup[subfigure]{justification=centering}	
	\centering
		\begin{subfigure}{0.32\textwidth}
		\includegraphics[width=1\textwidth,trim=25 14 0 0, clip]{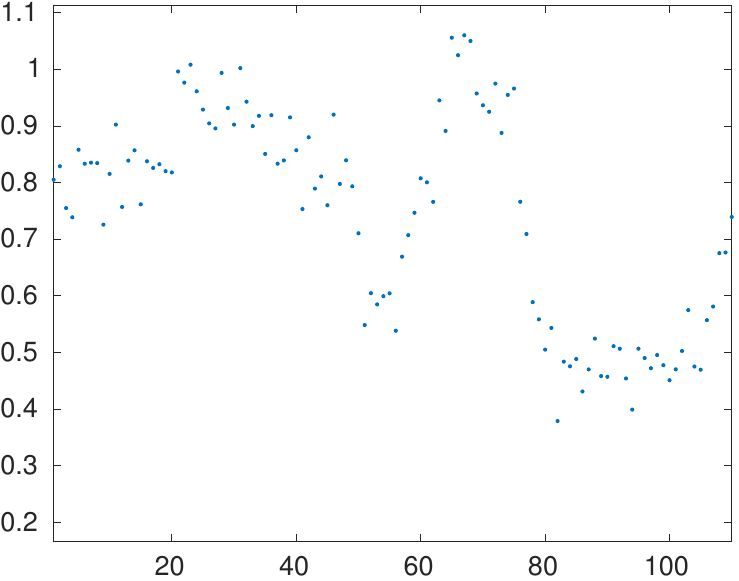}
	\caption{Noisy signal with discontinuities}
	\end{subfigure}\hspace{0.05\textwidth}
	\begin{subfigure}{0.32\textwidth}
		\includegraphics[width=1\textwidth,trim=25 14 0 0, clip]{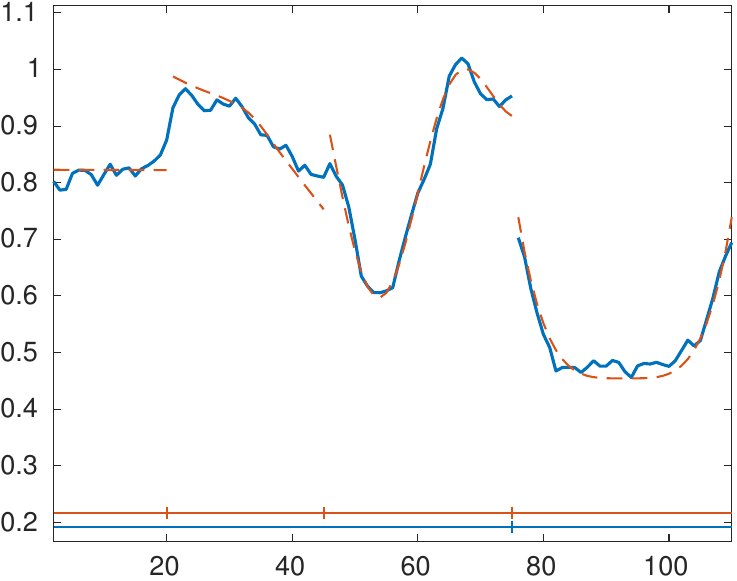}
	\caption{Result of $(\Pc_{1,\beta,\gamma})$ with optimal 
	$\beta,\gamma$}
	\end{subfigure}
	\\[2ex]
	\begin{subfigure}{0.32\textwidth}
		\includegraphics[width=1\textwidth,trim=25 14 0 0, clip]{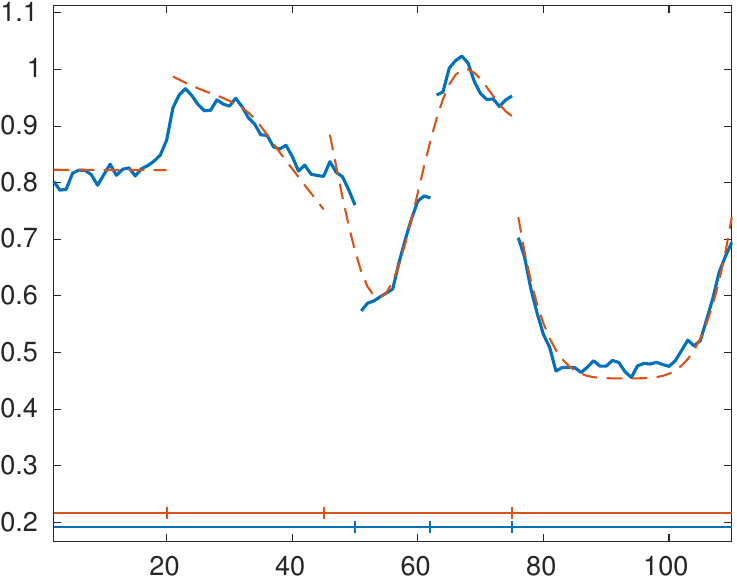}
		\caption{Smaller complexity penalty $\gamma$}
	\end{subfigure}\hspace{0.05\textwidth}
	\begin{subfigure}{0.32\textwidth}
		\includegraphics[width=1\textwidth,trim=25 14 0 0, clip]{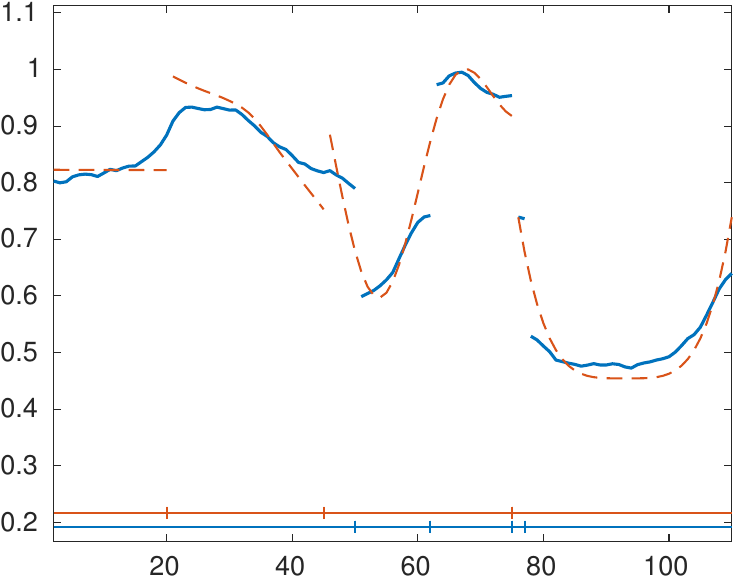}
		\caption{Larger elasticity parameter $\beta$}
	\end{subfigure}
	\caption{
		Limitations of the classical (first order) Mumford-Shah model:
		Subfigures (b) to (d) show reconstructions of the signal in (a)
		by the first order Mumford-Shah model  $(\Pc_{1,\beta,\gamma}).$
		The red  dashed lines depict the ground truth; the streaks at the bottom indicate the  discontinuities or segments of the ground truth (top, red) and the computed segmentations (bottom, blue).
		 \emph{(b)}~The model parameters were optimized with respect to the $\ell^2$ error to the ground truth
		($\gamma = 0.04$, $\beta = 1.3625$). The result provides unsatisfactory smoothing and  detection of discontinuities.
 \emph{(c)}~Decreasing the complexity penalty (here $\gamma= 0.02$) leads to more but dislocated discontinuities.
		\emph{(d)}~Increasing the elasticity parameter (here $\beta = 3$) leads to stronger smoothing, but to spurious segments as well.
		In either case, the first order model shows the tendency to create spurious discontinuities 
		at steep slopes which is known as the gradient limit effect.
	}
	\label{fig:GradientLimit_mixedEffects}
\end{figure}
As mentioned in the introduction,
a major limitation of the classical Mumford-Shah model is that 
data with locally linear or polynomial trends are not well approximated.
This undesirable effect, known as gradient limit effect, is illustrated in Figure~\ref{fig:GradientLimit_mixedEffects}.
We observe that the solution of the first order model (using model parameters optimized to the $\ell^2$ error)  
does not catch all discontinuities.
We also see that tuning the model parameters towards allowing for more discontinuities
leads to spurious discontinuities at steep slopes.
This shows that the first order models are not rich enough for dealing with 
data having locally linear or polynomial trends.

\subsection{Basic properties of higher order Mumford-Shah and Potts models}
\label{sec:basic_proporties}

We denote by $\nabla^k \in \R^{(q-k) \times q}$  the matrix that acts as $k$-th order finite difference on
the vector $u_I$ where $q= |I|.$
To fix ideas, the matrices $\nabla^k$ are given for $k=1$ and $k=2$ by
\[
	\nabla = \begin{pmatrix}
	-1  & 1    &  & & \\ 
	&	-1 &  1  &    &  \\ 
	&	     & \ddots   & \ddots & \\  
	&	&    & -1  & 1  \\ 
	\end{pmatrix} \in \R^{(q-1) \times q}
\qquad	\text{and} \quad
	\nabla^2 = \begin{pmatrix}
	1  & -2  & 1 &  & & \\ 
	&	1  & -2  & 1   & & \\ 
	&	  & \ddots  & \ddots   & \ddots & \\  
	&	& & 1  & -2  & 1  \\ 
	\end{pmatrix} \in \R^{(q-2) \times q}.
	\]
For higher orders $k \geq 3,$ 
the row pattern is equal to 
$t *  t * \ldots * t$ which denotes 
the $k$-fold convolution of the finite difference vector
$t = (-1, 1)$ with itself.

Using this notation the higher order Mumford-Shah problem \eqref{eq:penalizedProblem}
can be written as
\begin{equation}\label{eq:weakSplineDiscrete}
	(u^*, \Ic^*) = \argmin_{u \in \R^N,\,\Ic \text{ partition of }\Omega }~
	 \underbrace{\sum_{n=1}^N  (u_n - f_n)^2}_{\text{data penalty}} + \underbrace{\beta^{2k} \sum_{I \in \Ic}  \sum_{i =1}^{|I| - k} (\nabla^k u_I)_i^2 }_{\text{smoothness penalty}} + \underbrace{\gamma \, | \Ic |.}_{\text{complexity penalty}}
\end{equation}
As mentioned in the introduction,
the functional comprises a data penalty term,
a smoothness penalty term, and a complexity penalty term.
A minimizer $u^*$ is a $k$-th order discrete spline 
approximation to data $f$ on each segment of the partition $\Ic^*.$
The parameter $\gamma$ determines the penalty 
for opening a new segment.
The parameter $\beta$ controls influence of the smoothness penalty.
The order $k$ is the derivative order of the (discrete) spline.
Note that polynomials of order  $k-1$
on a segment do not have any smoothness penalty.

An illustration on the smoothing effect of higher order Mumford-Shah models 
in comparison to classical splines and to first order models is given in
\cref{fig:splineVsMS}.

\begin{figure}[t]
	\begin{subfigure}{0.32\textwidth}
		\includegraphics[width=1\textwidth,trim=25 14 0 0, clip]{Experiments/Synthetic_Data/Spline_Examples/Data.pdf}
	\caption{Noisy signal with discontinuities}
	\end{subfigure}
\hfill
	\begin{subfigure}{0.32\textwidth}
		\includegraphics[width=1\textwidth,trim=25 14 0 0, clip]{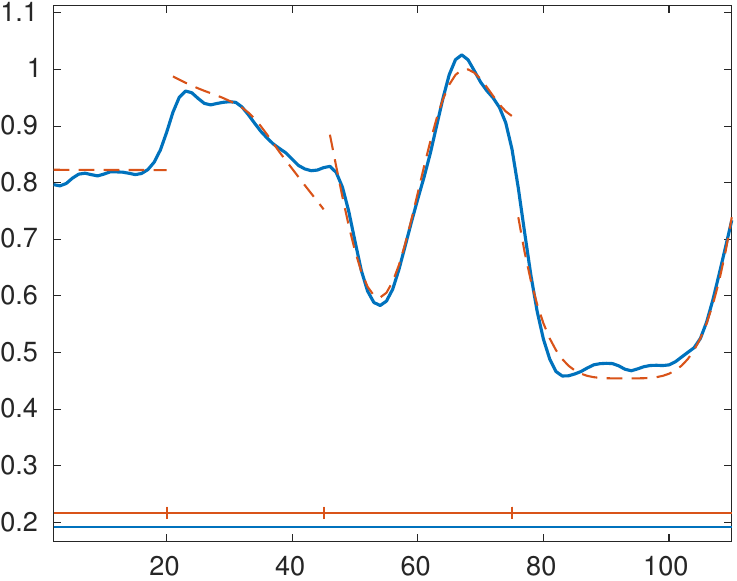}
		\caption{Smoothing spline}
	\end{subfigure}
\hfill
	\begin{subfigure}{0.32\textwidth}
		\includegraphics[width=1\textwidth,trim=25 14 0 0, clip]{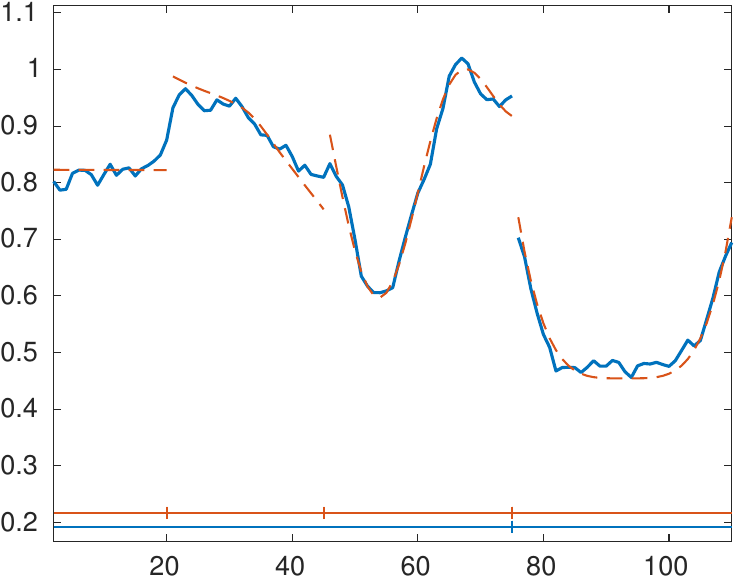}
		\caption{Classical Mumford-Shah $(\Pc_{1,\beta,\gamma})$}
	\end{subfigure}\\[0.7em]
	\begin{subfigure}{0.32\textwidth}
		\includegraphics[width=1\textwidth,trim=25 14 0 0, clip]{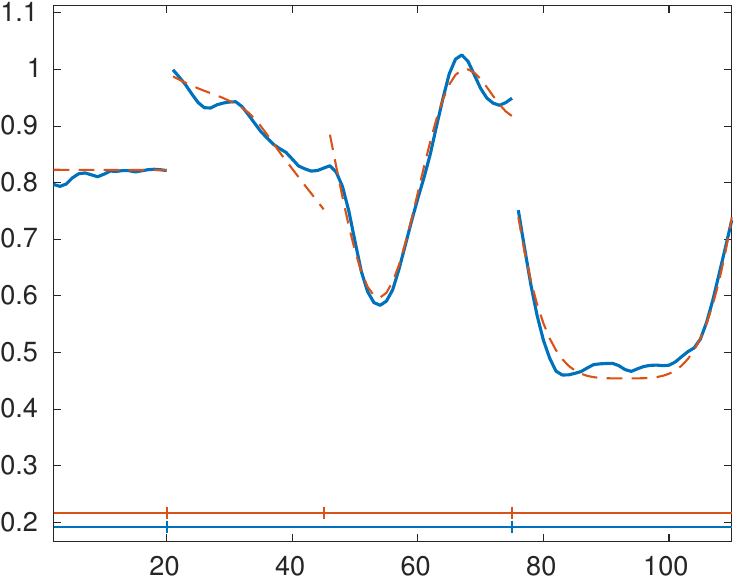}
		\caption{Higher order Mumford-Shah $(\Pc_{2,\beta,\gamma})$}
	\end{subfigure}\hfill
	\begin{subfigure}{0.32\textwidth}
		\includegraphics[width=1\textwidth,trim=25 14 0 0, clip]{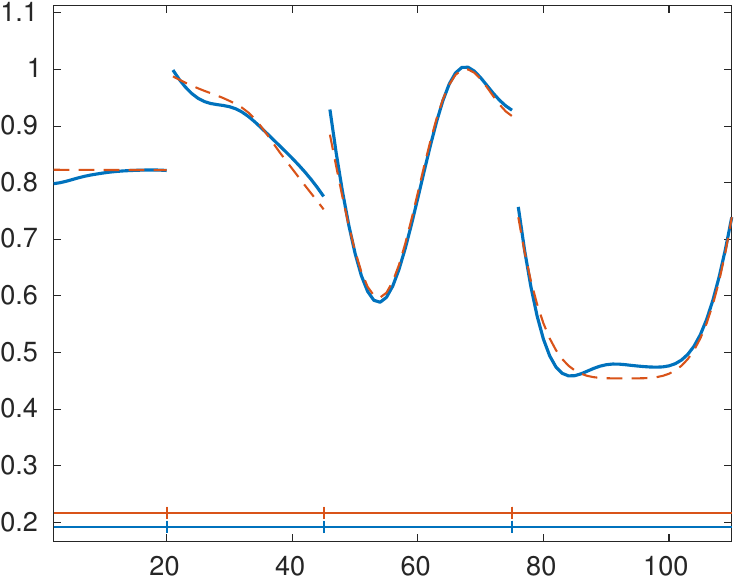}
		\caption{Higher order Mumford-Shah $(\Pc_{3,\beta,\gamma})$}
	\end{subfigure}\hfill
	\begin{subfigure}{0.32\textwidth}
		\includegraphics[width=1\textwidth,trim=25 14 0 0, clip]{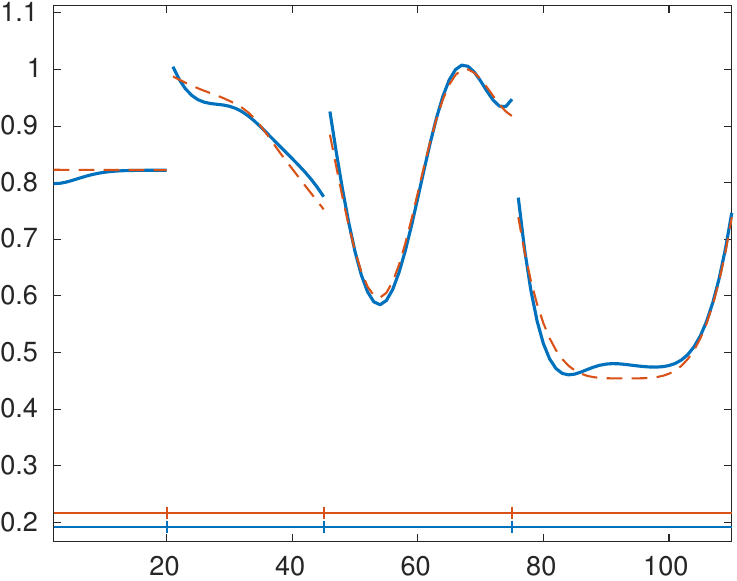}
		\caption{Higher order Mumford-Shah $(\Pc_{4,\beta,\gamma})$}
	\end{subfigure}	
	\caption{
		Smoothing a noisy signal with discontinuities   using various models. All model parameters are chosen with respect to optimal $\ell^2$-error.
		As in \cref{fig:GradientLimit_mixedEffects}, the ground truth 
		is depicted as red dashed line and the estimate as blue solid line.
		\emph{(b)}~Classical spline approximation smoothes out the discontinuities.
		 \emph{(c)}~The classical Mumford-Shah model allows for discontinuities, but the estimate misses most of them and the result remains noisy.
		 \emph{(d--f)}~The higher order Mumford-Shah results provide improved smoothing and segmentation.
		 In particular, the third and the fourth order models get the discontinuities of the groundtruth.
	}
	\label{fig:splineVsMS}
\end{figure}

\paragraph{Formulation as partitioning problem.} 
As in the first order case, it is convenient to formulate the higher order Mumford-Shah problem \eqref{eq:weakSplineDiscrete}
in terms of the partition only;
it reads
\begin{equation}\label{eq:partitioningProblem}
	\Ic^* = \argmin_{\Ic \text{ partition of } \dint{1}{N} } \sum_{I \in \Ic} (\Ec^I + \gamma),
\end{equation}
where $\Ec^{I}$ denotes the approximation error of 
the $k$-th order (discrete) smoothing spline on $I$ given by
\begin{equation}\label{eq:epsSpline}
	\Ec^I = \min_{v \in \R^{|I|}}~\|  v - f_I \|_2^2 +    \beta^{2k} \| \nabla^k v \|_2^2
			= \min_{v \in \R^{|I|}}~\sum_{i=1}^{|I|}  (v_i - (f_I)_i)^2 +   \sum_{i =1}^{|I| - k}  \beta^{2k} (\nabla^k v)_i^2.
\end{equation}
Note that the $k$-th order finite difference $\nabla^k$ is only 
well defined for vectors of length greater than $k,$
so $\Ec^{I} = 0$ if $|I| \leq k.$
The minimizing estimate $u^*$ can be recovered  from an optimal partition $\Ic^*$ 
by solving
\begin{equation}\label{eq:minimizerOnInterval}
	u_I^* = \argmin_{v \in \R^{|I|}}~\|  v - f_I \|_2^2 +    \beta^2 \| \nabla^k v \|_2^2, \quad\text{ for all }I \in \Ic^*.
\end{equation}
Hence, if we have computed an optimal partition $\Ic^*$ of the domain $\Omega,$
its accompanying signal estimate $u^*$ is uniquely determined.

To express the relation between  $u^*$ and $\Ic^*,$
it is convenient to introduce a formulation in terms of block matrices.
A partition $\Ic$ defines a block diagonal matrix  $L_\Ic$ by
\begin{equation}\label{eq:lpc}
	L_\Ic = 
	\begin{pmatrix}
		L_{|\Ic_1|} &   & &\\
		& L_{|\Ic_2|}    & &\\
		& &   \ddots &\\
		& &   & L_{|\Ic_M|} 
	\end{pmatrix},
	\qquad\text{with}\qquad
	L_n = \nabla^k \in \R^{(n-k) \times n},
\end{equation}
and with  $k$-th order finite difference matrices $\nabla^k$ of the appropriate size $(n-k) \times n$ defined as above.
Here, $|\Ic_m|$ denotes the cardinality of the $m$-th element of the partition $\Ic.$
If $n\leq k,$
we use the convention that $L_n$ is an \enquote{empty} block of length $n.$
The number of columns of $L_\Ic$ is equal to $N,$ 
and the number of rows depends on the size of the intervals with minimum length $k+1$ of the partition, i.e.,
 $\sum_{m=1}^M \max(|I_m|-k,0).$
 For example, for $k=2$ the partition $\Ic = \{ (1, \ldots, 4), (5, 6), (7, \ldots, 9)\}$
defines the matrix
\[
	L_\Ic = \begin{pmatrix}
	1  & -2  & 1 & & & & &\\ 
	   &  1  & -2  & 1 & & & & &\\ 
	& & &	& & & 1  & -2  & 1  \\ 
	\end{pmatrix} \in \R^{3 \times 9}.
	\]
	The block matrix notation \eqref{eq:lpc} allows to formulate 
the  minimization problem \eqref{eq:weakSplineDiscrete} 
in the compact form
  \begin{equation}\label{eq:minInUandI}
	\argmin_{u \in \R^N,\,\Ic \text{ partition}}~ \beta^{2k}\, \| L_\Ic u \|_2^2 + \| u - f\|_2^2 + \gamma\,|\Ic|.
\end{equation}
For a fixed partition $\Ic,$
taking derivatives with respect to $u$ reveals that 
a minimizer $u_{\Ic}$
satisfies the linear system
\begin{equation}\label{eq:minimizingProperty}
	2\beta^{2k} L_\Ic^T L_\Ic  u_{\Ic}  + 2( u_{\Ic} - f) = 0. 
\end{equation}
As the system has full rank for all $\beta \geq 0$
we get the unique solution
\begin{equation}\label{eq:GetUFromMatrix}
u_{f,\Ic} = S_{\Ic, \beta} f, \qquad\text{where } S_{\Ic,\beta} = (\beta^{2k} L_\Ic^T L_\Ic + \mathrm{id})^{-1}.
\end{equation}
We omit the subscript if the dependence on $\Ic$ or $\beta $ is clear.
Plugging
\eqref{eq:GetUFromMatrix}
 into \eqref{eq:weakSplineDiscrete}
 gives the  explicit expression 
 for the functional value of the higher order Mumford-Shah functional restricted to the partition~$\Ic$ as
\begin{align}\label{eq:functional_first}
	\Gc'_{\Ic}(f) =
	 \beta^{2k}\, \| L_\Ic S_\Ic f \|_2^2 + \| S_\Ic f - f\|_2^2
	+  \gamma\,|\Ic|.
\end{align}
Hence, the minimizing partition $\Ic^*$ 
is given as the minimizing argmuent of  $\Ic \mapsto \Gc'_{\Ic}(f).$

In contrast to the first order model,
expressing the problem only in terms of $u$ just like in \eqref{eq:firstOrderMS_u} is not feasible.
A reason for this is that
one solution $u$ may be the result of 
different partitions with different numbers of segments.
A simple example is the data $f = (0,1,0).$
For $\gamma < 2/3$ and for $\beta$ sufficiently large,
a minimizer is given by $u = f.$ 
The partitions $\big\{ (1,2), ( 3 )\big\},$
 $\big\{ (1), ( 2,3 )\big\}$ and $\big\{ (1), ( 2), (3 )\big\}$
lead to this $u = f.$

\paragraph{Minimum functional values and minimum segment lengths.}

We record the following basic property about minimizers.
Its proof follows an argument similar to the one used in \cite{blake1987visual} 
for a continuous domain second order problem.
\begin{lemma}\label{lem:functionalValueMinimizer}
Let $\Ic^*$ be a minimizing partition of \eqref{eq:weakSplineDiscrete}.
Then the minimal functional value is given by
\[
	\Gc'_{\Ic^*}(f) = \| f \|_2^2 - f^T S_{\Ic^*, \beta} f +  \gamma\,|\Ic^*|.
\]
\end{lemma}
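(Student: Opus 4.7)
The plan is to start from the expression \eqref{eq:functional_first} for $\Gc'_{\Ic}(f)$ evaluated at any partition $\Ic$ (in particular $\Ic = \Ic^*$) and to use the first-order optimality condition \eqref{eq:minimizingProperty} to eliminate the quadratic smoothness term. Writing $u_\Ic = S_\Ic f$, the optimality condition rearranges to $\beta^{2k} L_\Ic^T L_\Ic u_\Ic = f - u_\Ic$.

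Now I would expand the first two summands of $\Gc'_\Ic(f)$ in \eqref{eq:functional_first} using this identity. For the smoothness term,
\[
\beta^{2k}\|L_\Ic u_\Ic\|_2^2 = u_\Ic^T(\beta^{2k} L_\Ic^T L_\Ic u_\Ic) = u_\Ic^T(f - u_\Ic) = u_\Ic^T f - \|u_\Ic\|_2^2,
\]
and for the data term,
\[
\|u_\Ic - f\|_2^2 = \|u_\Ic\|_2^2 - 2 u_\Ic^T f + \|f\|_2^2.
\]
Adding these gives $\|f\|_2^2 - u_\Ic^T f = \|f\|_2^2 - f^T S_\Ic f$, where I used $u_\Ic = S_\Ic f$. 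Adding back the complexity term $\gamma|\Ic|$ yields the claimed expression for any partition $\Ic$, and in particular for the minimizing one $\Ic^*$.

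There is no real obstacle here: the statement is essentially a rewriting that exploits the quadratic nature of the restricted problem on each segment, together with the Woodbury-free observation that the solution $u_\Ic$ lies in the image of $S_\Ic$ and satisfies the Euler equation. The only minor bookkeeping point is that $S_\Ic$ is symmetric (being the inverse of the symmetric positive definite matrix $\beta^{2k}L_\Ic^T L_\Ic + \mathrm{id}$), so that $f^T u_\Ic = f^T S_\Ic f$; this justifies the final form and completes the proof.
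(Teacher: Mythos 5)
Your proof is correct and follows essentially the same route as the paper: expand the quadratic functional at $u_{\Ic}=S_{\Ic}f$ and use the normal equation \eqref{eq:minimizingProperty} to cancel the smoothness term against part of the data term, leaving $\|f\|_2^2-f^TS_{\Ic}f+\gamma|\Ic|$. The only cosmetic difference is that you substitute the Euler equation into the smoothness term directly rather than grouping terms as the paper does, and your closing remark about symmetry of $S_{\Ic}$ is not even needed since $f^Tu_{\Ic}=f^TS_{\Ic}f$ holds by definition of $u_{\Ic}$.
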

\begin{proof}
Let $\tilde u = S_{\Ic^*, \beta} f.$
Expanding the functional  yields 
\[
\begin{split}
	\Gc'_{\Ic^*}(f) &= \beta^{2k} \| L_{\Ic^*} \tilde u \|_2^2 + \| \tilde u -f \|_2^2 + \gamma\, |\Ic^*| \\
		   &= \beta^{2k} \tilde u^T L_{\Ic^*}^T  L_{\Ic^*} \tilde u + (\tilde u -f)^T  (\tilde u -f) + \gamma\, |\Ic^*| \\
		   &= \beta^{2k} \tilde u^T L_{\Ic^*}^T  L_{\Ic^*} \tilde u + \tilde u^T
		   (\tilde u -f) +  f^T (\tilde u -f) + \gamma\, |\Ic^*| \\
		   &= \tilde u^T (\beta^{2k}  L_{\Ic^*}^T  L_{\Ic^*} \tilde u + (\tilde u -f)) -  f^T (\tilde u -f) + \gamma\, |\Ic^*|\\
		    &= -f^T (\tilde u -f)	+ \gamma\, |\Ic^*| = \| f \|_2^2 - f^T \tilde u + \gamma\, |\Ic^*|,
\end{split}
\]
where we used the minimality property \eqref{eq:minimizingProperty} in the penultimate line.
\end{proof}

Next we show that there is always an optimal partition $\Ic^*$
which has at most one segment with less than $k$ elements:
\begin{lemma}\label{lem:minimumSegmentLength}
	For each partition $\Ic$ there is a partition $\Ic'$
	such that all segments $I' \in \Ic'$ (except possibly the leftmost one) have length greater or equal than $k$ 
	and that 
	\[
		\sum_{I' \in \Ic'} \Ec^{I'} \leq \sum_{I\in \Ic} \Ec^I \quad \text{and}\quad |\Ic'| \leq |\Ic|.
	\]
	In particular $\Gc'_{\Ic'}(f) \leq \Gc'_\Ic(f).$
\end{lemma}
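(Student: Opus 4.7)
The plan is to transform $\Ic$ into the desired $\Ic'$ by iteratively applying a local operation that eliminates a non-leftmost segment of length less than $k$, while controlling both the error sum and the segment count. At each step I would pick an index $j \geq 2$ with $|I_j| < k$ and consider the union $T = I_{j-1} \cup I_j$: if $|T| > k$ I replace the pair $(I_{j-1}, I_j)$ by $I'_{j-1}$ (the first $|T|-k$ indices of $T$) together with $I'_{j}$ (the last $k$ indices of $T$); otherwise I simply merge the two into a single segment $I' = T$.

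To maintain the error inequality I would exploit the convention $\Ec^I = 0$ whenever $|I| \leq k$, which is stated right after \eqref{eq:epsSpline}. In the merge case, $|T| \leq k$ forces $|I_{j-1}|, |I_j| \leq k$, so all three approximation errors are zero and the sum is preserved. In the shift case, $|I'_j| = k$ trivially gives $\Ec^{I'_j} = 0$, and $I'_{j-1}$ is by construction a prefix of $I_{j-1}$ (since $|T|-k \leq |I_{j-1}|$ because $|I_j| \leq k$). Taking an optimizer $v^*$ for $\Ec^{I_{j-1}}$ and restricting it to its first $|I'_{j-1}|$ coordinates yields a candidate $v'$ whose data-fidelity and smoothness contributions on $I'_{j-1}$ form a subset of those of $v^*$ on $I_{j-1}$. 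Combined with $\Ec^{I_j}=0$ (since $|I_j|<k$), this gives $\Ec^{I'_{j-1}} + \Ec^{I'_j} \leq \Ec^{I_{j-1}} + \Ec^{I_j}$. The segment count is preserved in the shift case and decreases by one in the merge case.

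For termination I would use the progress measure $\rho(\Ic) = \max\{j \geq 2 : |I_j| < k\}$, with $\rho(\Ic) = -\infty$ if no such $j$ exists, and apply the local operation at $j^* = \rho(\Ic)$. All segments at positions larger than $j^*$ stay unchanged and hence retain length $\geq k$; at position $j^*$ itself the new segment has length $k$ (shift case) or has been absorbed by merging. Consequently $\rho(\Ic') \leq j^* - 1 < \rho(\Ic)$, so after finitely many iterations the resulting partition has no non-leftmost short segment. The claim $\Gc'_{\Ic'}(f) \leq \Gc'_\Ic(f)$ is then immediate from the partition formulation \eqref{eq:partitioningProblem} by adding the two maintained inequalities.

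The step I expect to be most delicate is the restriction estimate $\Ec^{I'_{j-1}} \leq \Ec^{I_{j-1}}$ in the shift case, since it silently uses the locality of the $k$-th order finite difference: precisely because each row of $\nabla^k$ is supported on only $k+1$ consecutive entries, truncating $v^*$ to a prefix of $I_{j-1}$ truncates $\nabla^k v^*$ to the matching prefix of its smoothness residuals, so the restricted energy is a legitimate sub-sum of the original one.
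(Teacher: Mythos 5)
Your proof is correct and follows essentially the same strategy as the paper's: the paper likewise eliminates the right-most short segment by absorbing elements from its left neighbour (one element at a time rather than in a single batch step), relying on $\Ec^{I}=0$ for $|I|\leq k$ and on the fact that shrinking an interval cannot increase its approximation error. Your explicit prefix-restriction argument for $\Ec^{I'_{j-1}} \leq \Ec^{I_{j-1}}$, together with the termination measure, makes precise two steps the paper leaves implicit.
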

\begin{proof}
Let $\Ic$ be a partition and let $I$ be its right-most segment such that $|I| < k.$
Denote by $i$ the left boundary index of $I$.
If $i =1$ we are done.
Otherwise, we transfer the element $i-1$ from the left neighboring segment to the segment $I$
and denote the partition modified in this way by $\Ic'.$
(If the neighboring segment gets empty, we remove it from the partition.)
On the one hand, $|\Ic'| \leq |\Ic|.$
On the other hand, since $|I \cup \{i-1\}| \leq k$
we have that $\Ec^{I \cup \{i-1\}} = 0.$
Repeating the above procedure a finite number of times, 
we end up with a partition $\Ic''$ whose
 segments have length greater or equal to $k,$
 except possibly the leftmost segment. 
\end{proof}

\paragraph{Higher order Potts models.}

As mentioned in the introduction,
the higher order Potts model \eqref{eq:penalizedProblemPoly}
can be seen as the limit case of the higher order Mumford-Shah model for $\beta \to \infty.$
The main difference to the higher order Mumford-Shah model 
is that the approximation on a segment is
performed by a polynomial of maximum degree $k-1$
instead of a $k$-th order spline.
In consequence, the approximation error on a segment $I = \dint{l}{r}$ is given by
\begin{equation}\label{eq:defEpsLRPoly}
	\Ec^{I} =  \min_{\substack{v \text{ polynomial of}\\\text{degree $\leq k-1$ on } I }}
	\| v - f_I \|_2^2. 
\end{equation}
Complementing \eqref{eq:epsSpline} for the Mumford-Shah problem, 
\eqref{eq:defEpsLRPoly} is a least squares problem in the coefficients of the polynomial.

On the one hand, higher order Potts models are more restrictive
than genuine higher order Mumford-Shah models
since they enforce piecewise polynomial solutions.
On the other hand, due to the stronger prior,
they are more robust to noise.
From the computational side, one parameter less has to be determined 
for the Potts model.

Being the limit case $\beta \to \infty,$ 
the higher order Potts models has similar properties as the Mumford-Shah model.
In particular, if the data can be described by 
a polynomial of order $k-1$ on a segment,
then that segment does not get any approximation penalty.
In consequence, the assertion of 
\cref{lem:minimumSegmentLength} holds true for the higher order Potts models as well.

\subsection{Existence and uniqueness of minimizers}

It is straightforward to show the existence of minimizers.
\begin{theorem}
 The higher-order Mumford-Shah/Potts model \eqref{eq:penalizedProblem} has a minimizer
 for  each $k \in \N,$ $\gamma > 0,$  $\beta \in (0, \infty].$
\end{theorem}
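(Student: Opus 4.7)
The strategy is to reduce the joint minimization over $(u,\Ic)$ to a minimization over the finite set of partitions of $\Omega = \dint{1}{N}$, after showing that for every fixed partition the inner minimization over $u$ admits a minimizer.

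First I would fix a partition $\Ic$ and study the map $u \mapsto F_\Ic(u) = \|u-f\|_2^2 + \beta^{2k} \sum_{I\in\Ic}\|\nabla^k u_I\|_2^2$ (reading the second term as an indicator of the piecewise polynomial constraint in the limit $\beta=\infty$). For finite $\beta$, this is a strictly convex quadratic because of the data term $\|u-f\|_2^2$, which already makes $F_\Ic$ coercive; indeed the first-order optimality condition \eqref{eq:minimizingProperty} gives the unique minimizer $u_{f,\Ic} = S_{\Ic,\beta} f$ from \eqref{eq:GetUFromMatrix}, with minimum value $\Gc'_\Ic(f) - \gamma|\Ic|$ computed in \cref{lem:functionalValueMinimizer}. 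For $\beta = \infty$, the minimization over $u$ amounts to, on each segment $I \in \Ic$, solving a classical polynomial least squares problem as in \eqref{eq:defEpsLRPoly}. Writing $u_I = V_I c_I$ for the Vandermonde-type design matrix associated with a basis of polynomials of degree at most $k-1$, the cost becomes $\|V_I c_I - f_I\|_2^2$, a convex quadratic in $c_I$ that always attains its infimum (if $|I|\geq k$, then $V_I$ has full column rank and the minimizer is unique; if $|I|<k$, any $c_I$ with $V_I c_I = f_I$ works). In either case, a (possibly not unique) minimizing $u_\Ic$ exists, and we denote the resulting finite value by $F_\Ic^* = \min_u F_\Ic(u) + \gamma|\Ic|$.

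Second, the set of partitions of $\Omega$ into discrete intervals is finite (its cardinality is bounded by $2^{N-1}$). Hence
\begin{equation*}
\min_{\Ic} F_\Ic^*
\end{equation*}
is a minimum over a finite set of real numbers, so it is attained at some $\Ic^* $. Choosing a corresponding $u^* = u_{\Ic^*}$ from the inner minimization yields a minimizer of \eqref{eq:penalizedProblem}.

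The only mildly delicate step is the treatment of the limit case $\beta = \infty$: the cost is no longer coercive in $u$ as a function on all of $\R^N$, but once one reformulates the segmentwise problem in terms of polynomial coefficients $c_I$ (or equivalently uses \eqref{eq:parameterField} in the Potts formulation) one recovers a standard finite-dimensional least squares problem whose infimum is attained. Everything else is a direct consequence of the strict convexity of $F_\Ic$ for finite $\beta$ together with finiteness of the partition set.
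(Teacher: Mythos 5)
Your proposal is correct and follows essentially the same route as the paper: for each fixed partition the inner problem reduces to (coercive, strictly convex for finite $\beta$, respectively polynomial least squares for $\beta=\infty$) problems that attain their minima, and the outer minimization is over the finite set of interval partitions of $\Omega$. The paper's own proof is just a terser version of this argument, so no further comparison is needed.
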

\begin{proof}
	For a fixed partition $\Ic$, the problem \eqref{eq:penalizedProblem}
	reduces to least squares problems on the intervals of $\Ic$
	which all possess minimizers.
	As there are only finitely many partitions on $\Omega,$
	there is at least one solution with a minimal functional value.
\end{proof}

Uniqueness of the solution is more intricate.
The next example shows that 
the solutions of the higher order Mumford-Shah models
\eqref{eq:penalizedProblem} need not be unique.
For simplicity, we consider only the case $k= 2,$
 but analogous examples can be given for any order $k \geq 3.$
\begin{example}
Consider data $f = (0,1,0)$ and $k=2.$
The optimal signal corresponding to the partition $\Ic^3 = \{(1), (2), (3)\}$
is given by $u^3 = f$ and it has the functional value $2\gamma.$
The optimal solution of a partition with two segments 
is given by $u^2 = (0,1,0)$ as well, and $u^2$ has the lower functional value $\gamma.$
One can show that the the partition $\Ic^1 = \{ (1,2,3)\}$
yields the signal  $u^1 = \frac{1}{1+ 6 \beta^4} (2 \beta^4, 1+ 2\beta^4, 2\beta^4)^T$
and that the functional value is given by $\frac{4 \beta^4}{1+ 6 \beta^4}.$ 
Setting this equal to the energy of the two-segment solution, $\gamma,$
gives us the critical value
$
\frac{4 \beta^4}{1+ 6 \beta^4}  = \gamma
$ which is equivalent
to $\beta^4 = \gamma/(4 - 6 \gamma).$
Thus, for each $\gamma < 2/3$ 
there is $\beta >0$ such that  both the two-segment and the 
one-segment solutions are minimizers,
and that $u^1 \neq u^2.$
\end{example}

Fortunately, configurations as described above are very improbable, as we will see next.
As preparation we introduce 
a notion of equivalent partitions.
We say that two partitions $\Ic,\Jc$ are equivalent, i.e.,
\begin{align}
\Ic \sim \Jc \quad :\Leftrightarrow \quad
\left(
I \in \Ic \text{ and } |I|>k \Rightarrow I \in \Jc  
\quad \text{ and } \quad 
J \in \Jc \text{ and } |J|>k \Rightarrow J \in \Jc
\right),
\end{align}
if these partitions have the same intervals of minimum length $k+1.$ (The smaller intervals are irrelevant.)
Equivalent partitions $\Ic,\Jc$ define the same block matrices $L_\Ic,L_\Jc$, i.e., $L_\Ic = L_\Jc.$
Therefore, using \eqref{eq:GetUFromMatrix}, 
\begin{equation}
u_{f,\Ic} = S_{\Ic, \beta} f  = S_{\Jc, \beta} f = u_{f,\Jc}
\end{equation}
which tells that the minimizers w.r.t.\ the equivalent partitions $\Ic,\Jc$ are given by the same function. 
Together, each equivalence class of partitions $[\Ic]$ defines a unique restricted minimizer $u_{f,\Ic}.$ 
Further, to each $[\Ic]$ there is a unique matrix $L_\Ic.$ The latter correspondence is even one-to-one. Summing up,  
\begin{equation}\label{eq:bijective}
\text{both assigments} \quad  
[\Ic] \to L_\Ic,\,   [\Ic] \to u_{f,\Ic}    \quad
\text{are well-defined, and}\quad
[\Ic] \to L_\Ic \quad	 \text{is one-to-one.}
\end{equation}

In particular, the minimization problem \eqref{eq:minInUandI} 
may be recast in the form 
\begin{equation}\label{eq:minInUandIEquiv}
\argmin_{u \in \R^N,[\Ic]} F_{[\Ic]}(u), \quad \text{ where } \quad  
F_{[\Ic]}(u) =\beta^{2k}\, \| L_\Ic u \|_2^2 + \| u - f\|_2^2 + \gamma\,|[\Ic]|.
\end{equation}
Here, we let
\[
	|[\Ic]| = \min_{\Jc \in [\Ic]} | \Jc |.
\]
Using this notation, the functional  \eqref{eq:functional_first}
is well-defined w.r.t.\ the equivalence classes 
so that we can write
\begin{align}\label{eq:functional}
	\Gc_{[\Ic]}(f) =
	 \beta^{2k}\, \| L_\Ic S_\Ic f \|_2^2 + \| S_\Ic f - f\|_2^2
	+  \gamma\,|[\Ic]|.
\end{align}
With these preparations we get the following result on the uniqueness of minimizers:

\begin{theorem}\label{thm:AlmostEverywhereUniqueness}
Let $\gamma > 0,$ $\beta \in (0, \infty],$ and $k \in \N.$
 The minimizer $u^*$ of \eqref{eq:penalizedProblem} is unique for almost all input data $f.$
\end{theorem}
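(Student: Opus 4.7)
The plan is to leverage the equivalence-class reformulation \eqref{eq:minInUandIEquiv} together with the closed-form expression for the minimum functional value provided by \cref{lem:functionalValueMinimizer}. The key observation is that for each equivalence class $[\Ic]$ the restricted minimizer $u_{f,[\Ic]} = S_{\Ic,\beta} f$ depends linearly on $f$, and the attained value evaluates to
\[
\Gc_{[\Ic]}(f) = \|f\|_2^2 - f^T S_{\Ic,\beta} f + \gamma\,|[\Ic]|,
\]
which is a polynomial in $f$ of total degree at most two. I would first observe that non-uniqueness of $u^*$ in \eqref{eq:penalizedProblem} can arise only when there are two equivalence classes $[\Ic_1]\neq[\Ic_2]$ that both attain the overall minimum and for which additionally $S_{\Ic_1}f \neq S_{\Ic_2}f$; in particular, such $f$ satisfies $\Gc_{[\Ic_1]}(f)=\Gc_{[\Ic_2]}(f)$, which after subtracting $\|f\|_2^2$ rewrites as
\[
p_{\Ic_1,\Ic_2}(f) \;:=\; f^T (S_{\Ic_1}-S_{\Ic_2}) f \;-\; \gamma\bigl(|[\Ic_1]| - |[\Ic_2]|\bigr) \;=\; 0.
\]

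Next, I would split into two cases on the pair $([\Ic_1],[\Ic_2])$. If $S_{\Ic_1}=S_{\Ic_2}$, then these two classes yield the same estimate $u$ for every $f$ and hence cannot witness non-uniqueness of $u^*$; such pairs are discarded from the analysis. If $S_{\Ic_1}\neq S_{\Ic_2}$, then $S_{\Ic_1}-S_{\Ic_2}$ is a nonzero symmetric matrix, so the quadratic form $f\mapsto f^T(S_{\Ic_1}-S_{\Ic_2})f$ is a non-identically-zero polynomial on $\R^N$; consequently $p_{\Ic_1,\Ic_2}$ is a nontrivial polynomial of degree at most two whose zero set is an algebraic hypersurface of Lebesgue measure zero.

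Since there are only finitely many equivalence classes of partitions of $\Omega$, the set of data $f$ for which $u^*$ fails to be unique is contained in a finite union of measure-zero algebraic sets and hence is itself of Lebesgue measure zero in $\R^N$. For the Potts limit $\beta=\infty$, the same scheme applies once $S_{\Ic,\beta}$ is replaced by the orthogonal projection $P_{[\Ic]}$ onto the piecewise polynomial subspace of degree at most $k-1$ with breakpoints in $[\Ic]$; the corresponding identity $\Gc_{[\Ic]}(f)=\|f\|_2^2 - f^T P_{[\Ic]} f + \gamma\,|[\Ic]|$ follows immediately from $P_{[\Ic]}^2=P_{[\Ic]}^T=P_{[\Ic]}$. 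The main conceptual subtlety I would flag is that distinct equivalence classes of partitions need not produce distinct signal estimates, which is why the argument must be phrased in terms of the operators $S_{\Ic}$ rather than directly in terms of the classes $[\Ic]$; a pleasant feature of this formulation is that it sidesteps the question of whether the assignment $[\Ic]\mapsto S_\Ic$ is injective, since degenerate pairs contribute nothing to the non-uniqueness set.
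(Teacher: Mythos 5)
Your proof is correct and follows the same overall strategy as the paper's: both reduce the question to the observation that, for each of the finitely many equivalence classes of partitions, the restricted minimal value is a quadratic polynomial in $f$, and that the set where two such polynomials coincide is Lebesgue-null unless they are identical. Three details differ, and each works in your favour. First, you make the quadratic form explicit via \cref{lem:functionalValueMinimizer} as $\|f\|_2^2 - f^T S_{\Ic,\beta}f + \gamma\,|[\Ic]|$, whereas the paper manipulates the form \eqref{eq:functional} directly. Second, and more substantively, you establish nontriviality of the difference $p_{\Ic_1,\Ic_2}$ by splitting on whether $S_{\Ic_1}=S_{\Ic_2}$ and discarding the degenerate pairs as harmless for uniqueness of $u^*$; the paper instead infers nontriviality from the injectivity of $[\Ic]\mapsto L_\Ic$ asserted in \eqref{eq:bijective}, which strictly speaking yields only $L_{\Ic_1}\neq L_{\Ic_2}$ and not immediately that the quadratic forms differ --- your case split sidesteps this loose step cleanly, exactly as you flag at the end. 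Third, you invoke only the elementary fact that the zero set of a nonzero polynomial is null, whereas the paper cites the Morse-Sard theorem, which concerns critical values rather than level sets and is heavier machinery than needed. Your explicit treatment of the Potts limit $\beta=\infty$ via the orthogonal projections $P_{[\Ic]}$ is also a welcome addition, since the paper leaves that case implicit.
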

\begin{proof}		

Using the notation introduced right above, we may conclude that 
the solution of \eqref{eq:penalizedProblem} is unique for any  $f\in \mathcal{F}$ where the set $\mathcal{F}$ is given by  
\begin{align} \label{eq:TheCondition4Uniqueness} 
  f \in \mathcal{F}   \quad :\Leftrightarrow  &\quad  \text{ there is a partition $\Ic^\ast$ 
  	such that $  F_{[\Ic^\ast]}(u_{f,\Ic^\ast})  < F_{[\Ic]} (u_{f,\Ic})$ for all $\Ic \not\in  [\Ic^\ast].$   } 
\end{align}
We are going to show that the complement $\mathcal{F}^C$ of $\mathcal{F}$ in euclidean space is a negligible set in the sense that it has Lebesgue measure zero. Depending on the equivalence class of the partition $\Ic,$ we get that the minimal function value constraint to  $[\Ic]$ for data $f$ as 
is given by $\Gc_{[\Ic]}(f)$ defined in \eqref{eq:functional} as
	$\Gc_{[\Ic]}(f) =
	 \beta^{2k}\, \| L_\Ic S_\Ic f \|_2^2 + \| S_\Ic f - f\|_2^2
	+  \gamma\,|[\Ic]|.$
Hence, $\mathcal{F}^C \subset \mathcal{H},$ where  
\begin{align}\label{eq:defH}
\mathcal{H} = \{ f: 
\text{ there are $\Ic,\Ic'$ with $[\Ic] \neq [\Ic']$  such that } \Gc_{[\Ic]}(f)-\Gc_{[\Ic']}(f)= 0\}.
\end{align}
For fixed $\Ic,\Ic',$ both $\Gc_{\Ic},\Gc_{\Ic'}$ are quadratic forms w.r.t.\ the input $f.$ 
Since $[\Ic] \neq [\Ic']$ we have by \eqref{eq:bijective} that the quadratic form $f \mapsto \Gc_{[\Ic]}(f)-\Gc_{[\Ic']}(f)$ is nonzero.
Therefore, by the Morse-Sard theorem, the set $\{f:  \Gc_{[\Ic]}(f)-\Gc_{[\Ic']}(f)= 0 \}$ has Lebesgue measure zero. Forming the finite union w.r.t. $\Ic,\Ic',$ we see that $\mathcal H$ has Lebesgue measure zero. In turn,
the subset  $\mathcal{F}^C$ is a negligible set in the sense that it has Lebesgue measure zero which completes the proof.
\end{proof}

\subsection{Related models}
\paragraph{Relation to complexity-constrained models.}

Along with the complexity penalized models \eqref{eq:penalizedProblem}
it is natural to study the constrained variant
\begin{equation}\label{eq:constrainedProblem}\tag{$\Cc_{k, \beta,J}$}
(u^*, \Ic^*) = \argmin_{u \in \R^N, |\Ic| \leq J} \|  u - f \|_2^2 +  \beta^{2k}\sum_{I \in \Ic}  \| \nabla^k u_I \|_2^2. 
\end{equation} 
In fact, both variants are closely related:
Let us denote by $(u^J,\Ic^J)$ a solution of \eqref{eq:constrainedProblem}
for parameter $J.$
From the solutions for $J= 1, \ldots, N$, one can recover a solution  of \eqref{eq:penalizedProblem}
by simply choosing the solution $(u^{J^*},\Ic^{J^*})$  with the 
optimal functional value in \eqref{eq:penalizedProblem}; that is,
\[
	J^* \in   \argmin_{J=1, \ldots, N} ~\gamma J + \|  u^J - f \|_2^2 +  \beta^{2k}\sum_{I \in \Ic^J}  \| \nabla^k u^J_I \|_2^2.
\]
In \cite{blake1989comparison}, this relation was used for deriving a solver for
the first order problem
$(\Pc_{1, \beta, \gamma}).$

It is a particularly useful consequence of
 this relation that 
 the set of solutions of \eqref{eq:constrainedProblem} for 
 all  $J= 1, \ldots, N$,
can be used to compute minimizers of \eqref{eq:penalizedProblem}
for all $\gamma > 0$ simultaneously,
in the sense that we can determine a finite number of intervals for $\gamma$
where the corresponding solution does not change.

\paragraph{Relations to $\ell_0$-penalized problems.}

The classical first order Potts model ($\Pc_{1,\infty, \gamma}$)
can also be written in terms of $\ell_0$-\enquote{norm} of the target variable $u$ as
\begin{equation}\label{eq:potts_l0}
	u^* = \argmin_{u\in \R^N}~\gamma\, \|\nabla u \|_0 + \| u-f\|_2^2,
\end{equation}
where $\| v\|_0$   denotes the number of non-zero elements of a vector;
that is $\| v\|_0 = |\{n : v_n \neq 0\}|.$ 
We point out that 
plugging $\nabla^k$ in \eqref{eq:potts_l0}
does \emph{not} lead to an equivalent of the higher order Potts model ($\Pc_{k,\infty, \gamma}$); that is,  in general for $k \geq 2$
\begin{equation}\label{eq:kink}
	\argmin_{w\in \R^N}~\gamma \|\nabla^k w \|_0 + \| w-f\|_2^2, \neq u^* \text{}, 
	\quad\text{where $u^*$ is the minimizer of \eqref{eq:penalizedProblemPoly}.}
\end{equation}
For $k =2,$ the difference can be seen in the following example: 
Let $f = (-1,-1, 1,1).$
The optimal signal when restricting to 
the segmentation $\Ic^2 = \{ (1,2), (3,4)\}$ 
is given by $u_{\Ic^2} = f$ and thus the approximation error is equal to $0.$
Optimal signals with respect to other partitions with two elements yield a higher approximation error.
A simple calculation gives
that the best 
linear approximation on the one-segment partition $\Ic^1 = \{ (1,2,3, 4)\}$ is given by $u_{\Ic^1} = (-\frac{6}{5},-\frac{2}{5},\frac{2}{5}, \frac{6}{5})$
so that $\Ec^{(1:4)} = \frac{4}{5}.$
The functional values are given by
 $\frac{4}{5} + \gamma$ for $(\Ic^1, u_{\Ic^1})$ and by $2\gamma$ for $(\Ic^2, u_{\Ic^2}).$
Hence, ($\Pc_{2, \infty, \gamma}$) has the solution  $(\Ic^1,u_{\Ic^1})$ for $\gamma > \frac{4}{5},$ 
and $(\Ic^2,u_{\Ic^2})$ for $\gamma < \frac{4}{5}.$ 
(They are both optimal  for $\gamma = \frac{4}{5}$.)
In contrast, as $\|\nabla^2 u_{\Ic^1}\|_0 = 0$ and  
$\|\nabla^2 u_{\Ic^2}\|_0 = \|(2,-2)\|_0 = 2,$
the critical value for the model in \eqref{eq:kink}
is $\gamma = \frac{2}{5}.$
Thus,
the solutions of \eqref{eq:kink} for $k=2$ and ($\Pc_{2, \infty, \gamma}$)  are different 
for $\gamma \in (\frac{2}{5},\frac{4}{5}).$
The intuition behind that difference
is that in \eqref{eq:kink} for $k=2$ the number of kinks of $u$ are penalized,
 whereas in ($\Pc_{2, \infty, \gamma}$)  the number of changes in the 
 affine parameters are penalized. The model in \eqref{eq:kink} was studied in \cite{fearnhead2018detecting} for the case $k=2$.

It was observed in \cite{fortun2017fast}
that the second order Potts model 
can be formulated in terms of the $\ell_0$-\enquote{norm} 
of an affine parameter field.
For the higher order Potts model this can be accomplished as follows.
Let $C$ be a $\R^k$-valued function on $\Omega$
such that $C(n) = (a_0,\ldots,a_{k-1})^T$  describes a (column-) vector of polynomial coefficients
for each $n \in \Omega.$
Further, let 
$
\|\nabla C \|_0 = |\{ n : C(n) \neq C(n+1)\} |
$
count the number of changes of the polynomial parameter field $C.$ 
Then  the higher order Potts model can be formulated in terms of $C$ only:
\begin{equation}\label{eq:parameterField}
	C^* = \argmin_C ~\gamma\, \|\nabla C \|_0 + \sum_{n=1}^N \left( (1,n, \ldots, n^{k-1}) \,  C(n) - f_n\right)^2.
\end{equation}
A minimal partition $\Ic^*$ can be recovered 
by extracting the intervals of $C^*$ with a constant functional value.
A corresponding signal is obtained by $u^*_n = (1,n, \ldots, n^{k-1}) \,  C(n).$

\section{Fast and stable solver for higher order Mumford-Shah problems}
\label{sec:FastStableMumfordShahSolver}
 
We develop efficient and stable solvers for higher order Mumford-Shah and Potts problems \eqref{eq:penalizedProblem}
 for all $\gamma > 0,$ $\beta \in (0,\infty],$ and $k \geq 1.$
First we  recall a dynamic programming 
scheme commonly used for  partitioning problems. 
Then we develop a recurrence scheme for computing the 
required approximation errors which is key for the efficiency of the algorithm.
Eventually, we analyze the stability of the algorithm.

\subsection{Dynamic programming scheme for partitioning problems}
\label{section:dynamicProgrammingScheme}

Let us denote the functional in \eqref{eq:partitioningProblem} by $P,$ i.e.,
\[
	P(\Ic) = \sum_{I \in \Ic} (\Ec^{I} + \gamma).
\]
Note that the functional is well-defined also for a partition $\Ic$ on 
the reduced domain $\dint{1}{r},$ which we will utilize in the following.
Let the minimal functional value for the domain $\dint{1}{r}$ be denoted by
\[
P^*_r = \min_{\Ic \text{ partition on }\dint{1}{r}}  P(\Ic).
\]
The  value $P^*_r$ for the domain $\dint{1}{r}$ 
satisfies the Bellman equation
\begin{equation}\label{eq:recurrencePenalized}
    P^*_r 
    = \min_{l= 1, \ldots, r} \Big\{\Ec^\dint{l}{r} + \gamma + P^*_{l-1} \Big\},
\end{equation}
where we let $P^*_0 = 0.$
Recall that $\Ec^{\dint{1}{r}} = 0$ if $r-l+1 \leq k,$
so the minimum on the right hand side actually only has to be taken
over the values $l= 1, \ldots, r - k.$
By the dynamic programming principle,
we successively compute  $P^*_1,$ $P^*_2,$ until we reach $P^*_N.$ 
As our primary interest is the optimal partition $\Ic^*,$ rather than the minimal functional value $P^*_N$,
we keep track of a corresponding partition.
An economic way to do so is to store at step $r$ the minimizing argument $l^*$ 
of \eqref{eq:recurrencePenalized} as the value $J_r$ 
so that $J$ encodes the boundaries of an optimal partition;  see \cite{friedrich2008complexity}.

The above procedure has the complexity $\Oc(N^2 \phi(N))$
where $\phi$ is an upper bound for the effort of computing the
approximation errors $\Ec^I.$ 
The straightforward way to compute $\Ec^I$ 
is solving the least squares system \eqref{eq:epsSpline}.
This leads to  $\phi(N) = \Oc(N)$ as the involved matrices have a band structure. We develop a strategy that achieves
$\phi(N) = \Oc(1)$ in the next section.

In the following, we recall two strategies 
from \cite{storath2014fast} and \cite{killick2012optimal}, respectively,
to prune the search space.
In \cite{storath2014fast}, a pruning strategy was introduced by exploiting the relation $\Ec^{\dint{l}{r}} \leq \Ec^{\dint{l'}{r}}$ if $l' \leq l$. 
From \eqref{eq:recurrencePenalized} follows immediately that if the current value $P_r$ for $P_r^*$ satisfies
\begin{equation}
P_r < \Ec^{\dint{l}{r}} + \gamma \label{eq:AMpruning}
\end{equation}
for some $l$, one can omit checking all $l'<l$ for this $r$, thus, $P_r^* = P_r$. That is, we do not have to compute $\Ec^{l':r}$.
Another way to prune the dynamic program follows from the observation that the approximation errors satisfy the inequality
$
	\Ec^{\dint{l}{s}} + \Ec^{\dint{s+1}{r}} \leq \Ec^{\dint{l}{r}}, \text{for all } l \leq  s < r.
$
Killick et al.\,\cite{killick2012optimal}
deduced that if 
\begin{equation}\label{eq:KFpruning}
	P_s^* \leq P_l^* + \Ec^\dint{l+1}{s},
\end{equation}
then $l$ cannot be an optimal last changepoint at a future timepoint $r.$
That means,
the intervals $\dint{l+1}{r}$ for all $r = l+1, \ldots, N$
cannot be reached and consequently $l$ does not need to be considered again for any future timepoint 	$r$.

\subsection{Fast computation of the approximation errors for higher order Mumford-Shah problems}
\label{sec:comp_eps_mumfordShah}

Here, we develop a recurrence formula
for computing the $\Ec^\dint{l}{r}$ 
needed in \eqref{eq:recurrencePenalized}.
For notational simplicity, we describe
the basic scheme for the left bound $l = 1,$
i.e., computing $\Ec^\dint{1}{r}$ for $r = 1,\ldots,N.$
The procedure works analogously for any $l > 1.$ 

Recall that $\Ec^{\dint{1}{r}} = 0$ if $r \leq k$;
so we may assume that $r > k$ in the following.
Our starting point is to rewrite the minimization problem \eqref{eq:epsSpline} for $I = 1:N$
in matrix form as
\begin{equation}\label{eq:epsLRmatrix}
	\Ec^{\dint{1}{N}} = \min_{v \in \R^N}\| A v - y \|_2^2.
\end{equation}
Here,
\begin{equation}\label{eq:AmatrixGvector}
	A = \mtrx{ E_N \\
			\beta^k \nabla^k
	} 
	\in \R^{ (2N - k) \times N}
	\quad\text{and}\quad y = (f_1, \ldots, f_N, 0)^T \in \R^{ 2N - k},
\end{equation}
and $E_N$ represents the identity matrix of dimension $N.$
Further, determining $\Ec^{\dint{1}{r}}$ for $r < N$  amounts to solving 
the least squares problem of smaller size
\begin{equation}\label{eq:epsLRmatrix_small}
	\Ec^{\dint{1}{r}} =  \min_{v \in R^{r}}  \left\|A^{(r)} v - y^{(r)} \right\|_2^2,
\end{equation}
where $A^{(r)}$ is the submatrix of $A$ given by
\[
	A^{(r)} = \mtrx{A_{1:r, 1:r} \\ A_{(N+1:N+r-k), (1:r)}}, \quad \text{and} \quad y^{(r)} =  \mtrx{f_{1:r} \\ 0}.
\]
Note that we do not have to compute a minimizer $v^*$ of \eqref{eq:epsLRmatrix_small} 
to evaluate $\Ec^{\dint{1}{r}}$.
Instead, we develop a recurrence formula computing $\Ec^{\dint{1}{r}}$ directly
based on Givens rotations.
As preparation, we use the symbols $Q^{(r)}$ and $R^{(r)}$ to denote the QR decomposition of $A^{(r)},$ i.e.
\[A^{(r)} = Q^{(r)} \mtrx{R^{(r)} \\0}\] with an orthogonal matrix $Q^{(r)}$ and an upper triangular matrix $R^{(r)}.$
As the $\ell^2$-norm is invariant to orthogonal transformations we may represent $\Ec^{\dint{1}{r}}$ 
as
\begin{align}
	\Ec^{\dint{1}{r}}&=  \min_{v \in \R^{r}}  \left\|\mtrx{R^{(r)} \\ 0 } v - (Q^{(r)})^T y^{(r)} \right\|_2^2 \notag\\
	&= \min_{v\in\R^r} \| R^{(r)}v- ((Q^{(r)})^T y^{(r)})_{1:r}	\|^2_2
		+\| ((Q^{(r)})^T y^{(r)})_{r+1:2r-k} \|_2^2 \notag\\
	&= \| ((Q^{(r)})^T y^{(r)})_{r+1:2r-k} \|_2^2.  \label{eq:epsWithQR}
\end{align}
The first term in the second line vanishes since the corresponding linear system can be solved exactly.
All terms in the last line of \eqref{eq:epsWithQR} are explicitly given and do not involve minimization.
Our goal is to recursively compute  $\Ec^{\dint{1}{r+1}}$ 
without explicitly computing QR decompositions and without carrying out the summation
involved in the last line of  \eqref{eq:epsWithQR}.

The first step is the determination of recurrence coefficients.
To this end, assume that we have computed the QR decomposition 
of $A^{(r)}.$
We consider the auxiliary matrix $W^{(r)}$ containing the upper triangular matrix $R^{(r)}$
and the beginning of the $(N+r-k+1)$-th row of $A$:
\[
	W^{(r)} = \mtrx{R^{(r)} & 0 \\  0 & 1  \\ 0  & 0\\ \multicolumn{2}{c}{A_{N+r-k+1, (1:r+1)}} }.
\]
By the band structure of $A,$ 
 only the last $k+1$ entries of
$A_{N+r-k+1, (1:r+1)}$ are non-zero.
We aim at bringing $W^{(r)}$ to upper tridiagonal form using orthogonal transformations
without modifying the already present zeros.
To this end, we employ Givens rotations. (Note that Householder reflections would destroy the existing zero entries.)
Recall that a Givens rotation $G = G(j, m, \theta)$ is equal to the identity matrix with the $2\times 2$ submatrix $(G_{jj}, G_{jm}; G_{mj}, G_{mm})$
replaced by a planar rotation matrix; 
that is,
\begin{equation}
\label{eq:GivensRotation}
	G(j, m, \theta) =  \begin{pmatrix}   1   & \cdots &    0   & \cdots &    0   & \cdots &    0   \\
                      \vdots & \ddots & \vdots &        & \vdots &        & \vdots \\
	                     0   & \cdots & \cos(\theta)  	& \cdots &    \sin(\theta)  & \cdots &    0   \\
		              \vdots &        & \vdots & \ddots & \vdots   	 &        & \vdots \\
	                     0   & \cdots &   -\sin(\theta)   & \cdots  &\cos(\theta)& \cdots &    0   \\
                      \vdots &        & \vdots &        & \vdots & \ddots & \vdots \\
                         0   & \cdots &    0   & \cdots &    0   & \cdots &    1
       \end{pmatrix},
\end{equation}
where $\theta$ denotes the rotation angle.
In order to eliminate the matrix entry $a = A_{mj}$
 by the pivot element $b = A_{jj}$ 
 we use left multiplication by the Givens rotation $G(j,m,\Theta_{mj})$ with the parameters
\begin{equation}\label{eq:theta}
	\cos(\Theta_{mj}) = b/\rho, \quad \sin(\Theta_{mj}) = a/\rho
\end{equation}
where $\rho =\sign(b) \sqrt{a^2 + b^2}.$
Here, we have used the notation $\Theta_{mj}$ to denote
the rotation angle of the corresponding Givens rotation. 
Since  $G(j,m,\theta)$ only operates on the $j$-th and $m$-th row of a matrix
it does not destroy the zeros already present in other lines.
Hence, we eliminate the last row of $W^{(r)}$ 
by using $k+1$ Givens rotations with parameters chosen according to \eqref{eq:theta}
to obtain $R^{(r+1)}.$
This shows how to recursively compute $R^{(r+1)}$ 
given $R^{(r)}.$ 
The relevant quantities we need in the following 
are the rotation angles $\Theta_{mj}$
which serve as the recurrence coefficients. 

Having computed $\Theta_{mj},$
we now are able to carry out the error update step
from $\Ec^{\dint{1}{r}}$ to 
 $\Ec^{\dint{1}{r+1}}$ in $\Oc(1)$:
Assume that we have computed $\Ec^{\dint{1}{r}}$ 
and the vector $q^{(r)}$ defined by
\[
	q^{(r)} = (Q^{(r)})^T y^{(r)}.
\]
The  $q^{(r)}$ satisfy the recurrence relation
\begin{equation}\label{eq:recurrence_q}
	q^{(r+1)} = G^{(r+1)}  \mtrx{ q^{(r)}_{1:r} \\ f_{r+1} \\ q^{(r)}_{r+1:2r - k} \\ 0 },
\end{equation}
where $G^{(r+1)}$ denotes the elimination matrix composed of the above $k+1$ Givens rotations; that is,
\begin{equation}\label{eq:G_r_1}
	G^{(r+1)} = \prod_{j=1}^{k+1} G\Big(r-k+j,2(r+1)-k,\Theta_{N+r+1-k,r-k+j}\Big),
\end{equation}
where we use the convention $\prod_{j=1}^{k} Z_j = Z_kZ_{k-1}\cdots Z_{1}$.
Further, as $G^{(r+1)}$ only operates on the first $r+1$ lines and the last line of the vector on the right hand side of  \eqref{eq:recurrence_q} it follows that
\begin{equation}
	\| q^{(r+1)}_{r+1:2r+1-k} \|_2^2 = \| q^{(r)}_{r+1:2r-k} \|_2^2 + 
	(q^{(r+1)}_{2(r+1)-k})^2. 
\end{equation}
Therefore, the error update is given by
\begin{equation}
\label{eq:ErrorUpdate_new}
\Ec^{\dint{1}{r+1}}  =\Ec^{\dint{1}{r}} + (q^{(r+1)}_{2(r+1)-k})^2.
\end{equation}
To summarize the update scheme  consists of computing $q^{(k+1)}$ 
by \eqref{eq:recurrence_q} and updating $\Ec^{\dint{1}{r+1}}$ by \eqref{eq:ErrorUpdate_new}.

The errors $\Ec^{l:r}$  can be updated in the same 
fashion by applying the above procedure to the data $\tilde f = (f_l, \ldots f_{r}).$
An important practical aspect is that the recurrence coefficients $\Theta_{mj}$
do not depend on the data.
Thus, we only need to compute the $(N-k)(k+1)$ recurrence coefficients once
and can reuse them for computing all $\Ec^{l:r}.$ 

We briefly discuss the accuracy of the error update scheme \eqref{eq:ErrorUpdate_new}.
As Givens rotations are orthogonal 
they have the optimal condition number one. 
Hence, there is no inherent error amplification in the elimination steps.
The practical accuracy of the error update  is illustrated by the following numerical experiment. 
We compute the approximation errors of a polynomial of degree $k-1.$
As these are in the null space of $\nabla^k$ the approximation errors $\Ec^{1:r}$
are exact equal to $0$ for all $r = 1, \ldots, N.$ 
\cref{Fig Stability Mumford-Shah} shows that the proposed procedure reproduces
the exact results up to machine precision.
% !TEX root = ../../higherOrderBZ.tex
\begin{figure}[t]
	\centering
\begin{subfigure}{0.32\textwidth}
	\includegraphics[width=\textwidth]{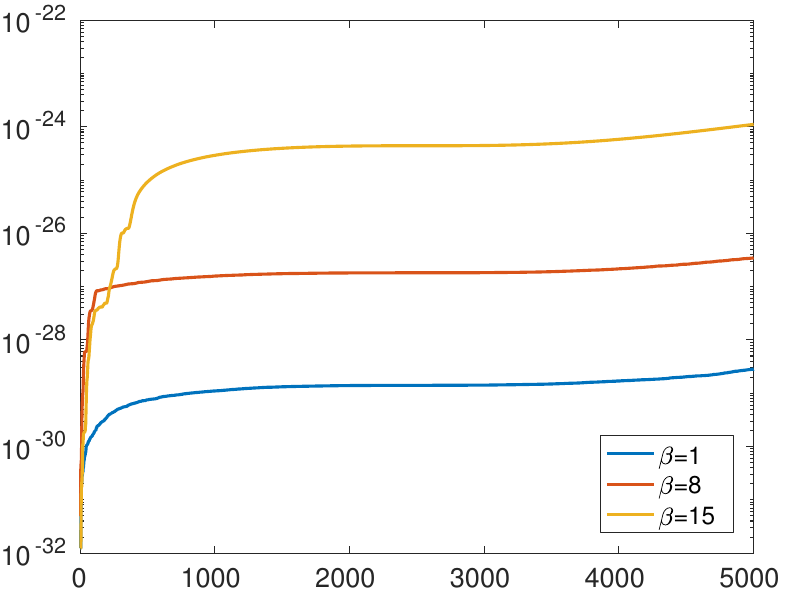}
	\subcaption{}
\end{subfigure}~~~
\begin{subfigure}{0.32\textwidth}
	\includegraphics[width=\textwidth]{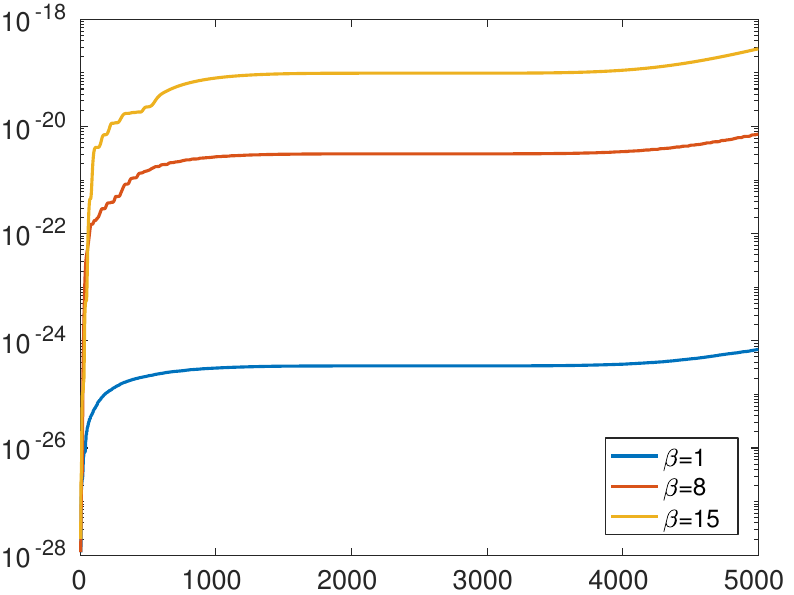}
	\subcaption{}
\end{subfigure}~~~
\begin{subfigure}{0.32\textwidth}
	\includegraphics[width=\textwidth]{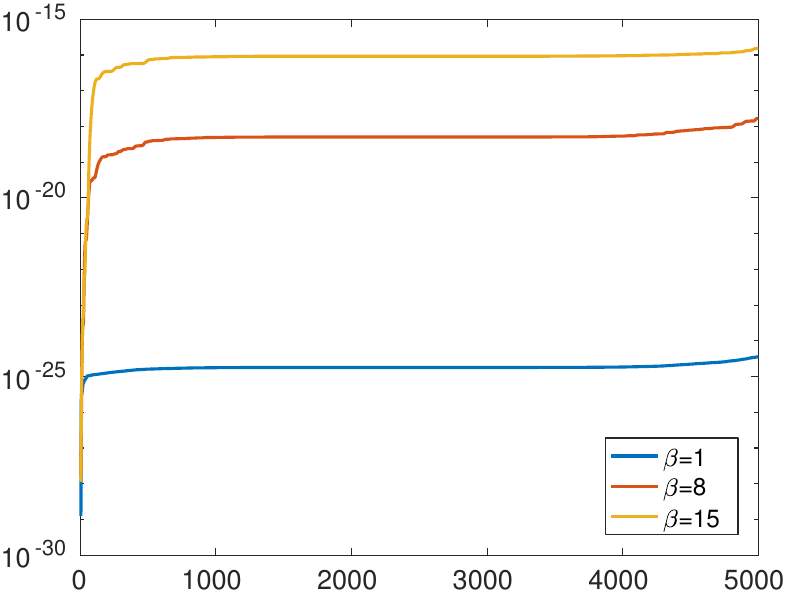}
	\subcaption{}
\end{subfigure}
		\caption{
\label{Fig Stability Mumford-Shah}
Approximation errors $\Ec^{\dint{1}{r}},$ $r=1,...,N,$ 
for \emph{(a)}  a linear polynomial and order $k=2,$ 
for \emph{(b)} a quadratic polynomial and order $k=3,$ 
and for \emph{(c)} a cubic polynomial and order $k=4.$ 
%The approximation error was computed 
%The values of $\beta$ are the same as in \cref{Fig Illustration of Alpha}.
The theoretical approximation errors are equal to zero;
the  approximation errors computed using the recurrence formula~\eqref{eq:ErrorUpdate_new} are accurate  up to machine precision.
}
\end{figure}

Next we explain how to include the two pruning strategies from \cref{section:dynamicProgrammingScheme}. 
The first strategy with condition \eqref{eq:AMpruning} requires to run 
over the $l$-index in a descending way, i.e. in the order $l = r, r-1,\ldots, 1$. On the other hand, 
the second strategy demands checking \eqref{eq:KFpruning} for all $1<l<r$ after $P_r^*$ was determined. Consequently, if \eqref{eq:AMpruning} holds for some $l$ at $(1:r)$, condition \eqref{eq:KFpruning} cannot be checked for $l' < l$ since $\Ec^{l':r}$ has not been computed yet. In order to overcome this issue without obliterating the first pruning, we  proceed as follows. At domain $(1:r)$, run through all $l$ in the descending list $L$
and update successively the corresponding approximation errors to $\Ec^{\dint{l}{r}}$. After each update step, check whether \eqref{eq:KFpruning} is satisfied for the current upper interval bound of the error and if so, delete $l$ from $L$ and start again with the next entry in $L$. By this, it is not necessary to  adapt the second pruning strategy essentially: check condition \eqref{eq:AMpruning} after testing if a not pruned $l$ is the current optimal last changepoint.
By combining the pruning strategies we effectively decrease the total number of error updates \eqref{eq:ErrorUpdate_new} that have to be performed; see \cref{Section Computing Time} for a numerical study.

We provide a pseudocode for the proposed
solver in the appendix (Algorithm~\ref{alg:solverPenalized}).
Let us summarize the above derivation:
\begin{theorem}
     Let $f\in \R^N,$ $k \in \N,$ and $\beta, \gamma  > 0.$ 
	 The proposed algorithm computes 
	 a global minimizer of \eqref{eq:penalizedProblem}. 
	 The worst case time complexity is $\Oc(N^2).$
\end{theorem}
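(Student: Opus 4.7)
The plan is to split the statement into a correctness part and a complexity part. By the equivalences \eqref{eq:partitioningProblem}--\eqref{eq:minimizerOnInterval}, producing a global minimizer of \eqref{eq:penalizedProblem} amounts to (i) finding an optimal partition $\Ic^*$ of $\dint{1}{N}$ minimizing $P(\Ic) = \sum_{I\in\Ic}(\Ec^I + \gamma)$, and (ii) solving the spline system \eqref{eq:minimizerOnInterval} on each $I \in \Ic^*$. Step (ii) is executed in the final loop of Algorithm~\ref{alg:solverPenalized}, so I would focus correctness on step (i). For this I would appeal to the Bellman principle behind \eqref{eq:recurrencePenalized}: by induction on $r$, if $P^*_{r'}$ is correct for all $r' < r$, then the right-hand side of \eqref{eq:recurrencePenalized} equals the true optimum on $\dint{1}{r}$, and storing the minimizing split index in $J_r$ lets the final backtracking loop reconstruct an optimal $\Ic^*$.

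The non-routine step is then to verify that the Givens recurrence \eqref{eq:recurrence_q}--\eqref{eq:ErrorUpdate_new} really delivers $\Ec^{\dint{l}{r+1}}$ from $\Ec^{\dint{l}{r}}$. First I would observe that $G^{(r+1)}$ is orthogonal and, by construction, triangularizes $W^{(r)}$ without disturbing the zeros already present; hence $G^{(r+1)}$ composed with the old orthogonal factor yields the new $Q^{(r+1)}$ (up to a trivial zero padding), so that \eqref{eq:recurrence_q} indeed computes $q^{(r+1)} = (Q^{(r+1)})^T y^{(r+1)}$. Then \eqref{eq:epsWithQR} together with the fact that $G^{(r+1)}$ touches only the rows contributing the single new squared residual gives \eqref{eq:ErrorUpdate_new}. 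The same argument applies for any starting index $l$ in place of $l=1$ since only the relative positions of the rows in $A$ matter, which is also why the rotation angles $\Theta$ can be precomputed once. For the two pruning rules I would give short soundness arguments: \eqref{eq:AMpruning} is valid by the monotonicity $\Ec^{\dint{l'}{r}} \geq \Ec^{\dint{l}{r}}$ for $l'\le l$, which rules out any earlier candidate improving upon $P_r$; \eqref{eq:KFpruning} is the Killick--Fearnhead argument based on the superadditivity $\Ec^{\dint{l+1}{s}} + \Ec^{\dint{s+1}{r}} \leq \Ec^{\dint{l+1}{r}}$, which prevents any future $r > s$ from preferring $l$ over $s$. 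Neither rule eliminates an optimal partition.

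For the complexity I would argue as follows. The precomputation triangularizes $A$ row by row: each of the $N-k$ band rows is cleared by $k+1$ Givens rotations of cost $O(k)$ each, so altogether $O(Nk^2) = O(N)$ operations for fixed $k$. In the main double loop the pair $(l,r)$ ranges over at most $N(N+1)/2$ values; each error update \eqref{eq:recurrence_q}--\eqref{eq:ErrorUpdate_new} applies $k+1 = O(1)$ Givens rotations and the accompanying Bellman and pruning checks are likewise $O(1)$, giving $O(N^2)$ in total. The backtracking and the final segment-wise least-squares solves in \eqref{eq:minimizerOnInterval} are each $O(N)$ because the segments partition $\dint{1}{N}$ and the band systems can be solved by reusing the stored angles $\Theta$ in time linear in each segment length. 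Summing yields the worst-case bound $\Oc(N^2)$.

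The main obstacle I anticipate is the bookkeeping in Paragraph~2, namely making rigorous that the single precomputed table $\Theta$ governs the update for every starting index $l$, not just $l=1$. Intuitively this is immediate from the translation invariance of $A$ and the bandedness of $\nabla^k$, but writing out the index shifts carefully across \eqref{eq:AmatrixGvector}--\eqref{eq:G_r_1} is where care is needed; everything else is either a direct application of the Bellman principle or a standard $O(1)$-per-step counting argument.
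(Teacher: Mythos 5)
Your proposal is correct and follows essentially the same route as the paper: correctness via the Bellman recursion \eqref{eq:recurrencePenalized} with backtracking through $J$, and the $\Oc(N^2)$ bound from the quadratic double loop combined with $\Oc(1)$-per-interval error updates enabled by the banded structure of $A$ and the precomputed Givens angles, plus an $\Oc(N)$ reconstruction step. The extra material you include (soundness of the two pruning rules and the verification that $G^{(r+1)}$ composes with the old orthogonal factor to give $Q^{(r+1)}$) is not in the paper's proof of the theorem itself but is established in the surrounding Sections~\ref{section:dynamicProgrammingScheme} and~\ref{sec:comp_eps_mumfordShah}, which the theorem's proof implicitly invokes, so this is a matter of presentation rather than a different argument.
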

\begin{proof}
It follows from the Bellman equation \eqref{eq:recurrencePenalized}
that the algorithm computes indeed a global minimizer.
The double loop over the the $l$ and $r$ indices 
has quadratic worst-case complexity.
It remains to show that for each $r \in (1:N)$
we can compute $\Ec^{\dint{r-k}{r}},$
$\Ec^{\dint{r-k-1}{r}}, $ \ldots , $\Ec^{\dint{1}{r}}$ 
in $O(1)$ per element. 
As each line of $A$ has at most $k+1$ entries, the elimination
of one line  requires $k+1$ elimination steps.
By the band structure of $A$ each 
elimination step by Givens rotations creates only new non-zeros 
in a band of $k+1$ entries
above the diagonal $A_{ii},$ $i=1, \ldots, N.$ 
Thus, computing the  recurrence coefficients
needs only $\Oc(k^2N)$ operations.
As applying a Givens rotation to a vector only needs a constant amount of operations,
the multiplication in \eqref{eq:recurrence_q} is in $O(k).$
(Note that the matrix $G^{(r+1)}$ is not explicitly created.)
Hence, executing the recurrence \eqref{eq:ErrorUpdate_new}
is $\Oc(k).$
It follows that computing the errors for all $\Oc(N^2)$ intervals sums up to $\Oc(kN^2).$
The reconstruction step from a partition is in $\Oc(kN)$
as it reduces to solving a least squares system
of band matrices whose 
number of rows sum up at most $2N - k.$
As $k$ is fixed the overall worst case time complexity
is $\Oc(N^2).$
\end{proof}

\subsection{Fast computation of the approximation errors for higher order Potts problems} \label{sec:potts_solver}

We describe a stable yet fast procedure to compute the
approximation errors for the higher order Potts problems \eqref{eq:defEpsLRPoly}.
To this end,  we first rewrite \eqref{eq:defEpsLRPoly} in terms of the polynomial coefficients $p \in \R^k$ as
\begin{equation}\label{eq:defEpsLRPoly2}
	\mathcal{E}^{l:r} =  \min_{p \in \R^k} \| B_{l:r, 1:r}p - f_{l:r} \|_2^2, 
	\end{equation}
where $B$ is the  $\R^{N \times k}$ matrix defined by
\begin{equation}
\label{eq:systemMatrixPoly}
	B = \mtrx{ 1 & 1 & \cdots  & 1^{k-1} \\ 
			   1 & 2 & \cdots  & 2^{k-1}  \\
			   \vdots & \vdots & & \vdots\\
			   1 & N-1 &  \cdots & (N-1)^{k-1} \\
			   1 & N & \cdots  & N^{k-1} 
	} \in \R^{N \times k}.
\end{equation}
As in \cref{sec:comp_eps_mumfordShah}, 
we describe the method for the prototypical case $l =1.$
 Furthermore, we assume that $r > k$ since otherwise $\Ec^{\dint{1}{r}} = 0$.
Denoting the submatrix $B_{1:r,1:k}$ by $B^{(r)}$  and its QR decomposition by $Q^{(r)}, R^{(r)},$
 we obtain in analogy to \eqref{eq:epsWithQR} that
\begin{equation}
\label{eq:epsWithQRPoly}
\begin{split}
\Ec^{\dint{1}{r}} &= \min_{p\in\R^k} 
\bigg\|
\begin{pmatrix} R^{(r)} \\ 0 \end{pmatrix}
p - (Q^{(r)})^T f_{1:r}
\bigg\|_2^2 = \| q^{(r)}_{k+1:r}\|_2^2,
\end{split}
\end{equation} 
where $q^{(r)}$ is given by
\[	
	q^{(r)} = (Q^{(r)})^T f_{1:r}.
\]
The recurrence coefficients for the error update
$\Theta_{r+1, j}$ for $ j = 1, \ldots, k$
are the Givens rotation angles 
for eliminating the entry $B_{r+1,j}$ with the pivot element $R^{(r)}_{j,j}.$
Now assume that we have computed $q^{(r)}$ and $\Ec^{\dint{1}{r}}.$
Then, $q^{(r+1)}$ can be expressed by the recurrence relation
\begin{equation}\label{eq:recurrence_q_Potts}
	q^{(r+1)} =  G^{(r+1)}\mtrx{q^{(r)} \\ f_{r+1}},
\end{equation}
where $G^{(r+1)}$ comprises the Givens rotations $G(j,r+1,\Theta_{r+1,j})$ for $j = 1, \ldots, k;$ that is,
\begin{equation}
 G^{(r+1)} =	\prod_{j=1}^k
G(j,r+1,\Theta_{r+1,j}),
\end{equation}
where we again use the convention $	\prod_{j=1}^k Z_j = Z_kZ_{k-1}\cdots Z_1$.
As $ G^{(r+1)}$ operates only on the first $k$ entries
and the last entry of $q^{(k)},$
we obtain by
\eqref{eq:epsWithQRPoly} 
\begin{equation}
\label{eq:ErrorUpdatePoly}
\begin{split}
\Ec^{\dint{1}{r+1}} 
= \| q^{(r+1)}_{k+1:r+1}\|_2^2 
= \| q^{(r)}_{k+1:r}\|_2^2 + (q^{(r+1)})_{r+1}^2 = 
\Ec^{\dint{1}{r}} + (q^{(r+1)})_{r+1}^2
\end{split}.
\end{equation}

\begin{remark} 
\begin{figure}[t]
	\centering
\begin{subfigure}[b]{0.49\textwidth}
\centering
	\includegraphics[width=0.85\textwidth]{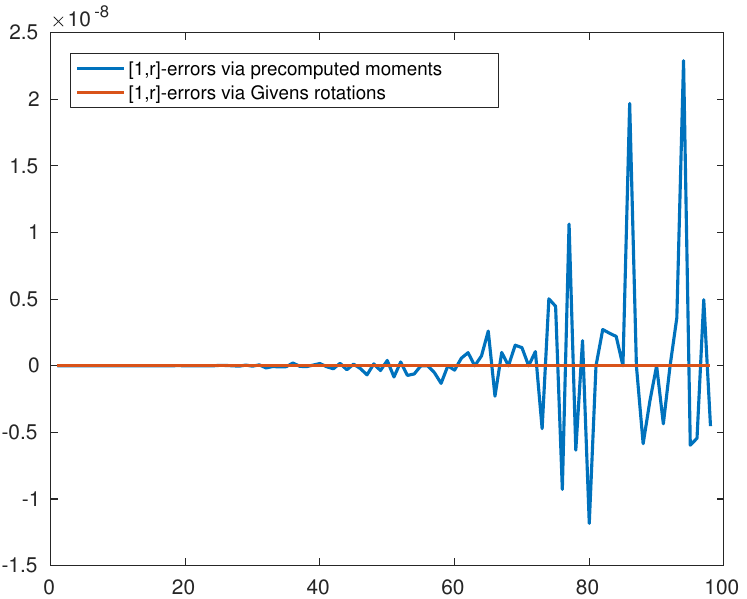}
\end{subfigure}
\begin{subfigure}[b]{0.49\textwidth}
\centering
	\includegraphics[width=0.844\textwidth]{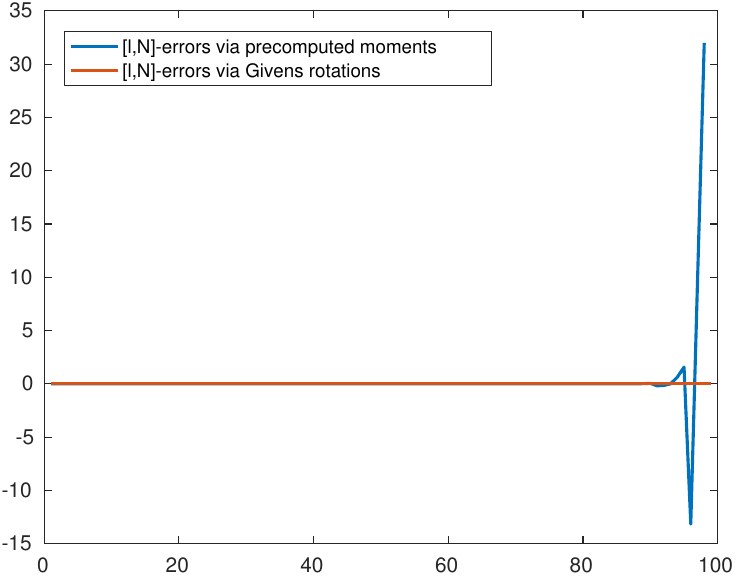}
\end{subfigure}
		\caption{
\label{Fig Stability Potts}
Approximation errors for a parabolic signal
% $n\mapsto n^2/100$, $n=0,...,N$, $N=100$ 
for the higher order Potts model of order $k=3.$ 
The graphs show the results
based on precomputed moments (blue) and based on the proposed scheme (red).
The true approximation errors are all equal to zero. 
\emph{Left:}~ Errors $\Ec^{1:r}$ for $r=1,...,N$. The computation based on precomputed moments is distorted beyond machine precision and gives even negative values.
\emph{Right:}~Errors $\Ec^{l:N}$ for $l=1,...,N-1$. The values derived from precomputed moments are strongly distorted when $l$ approaches $N.$ 
The proposed computation based on Givens rotations is accurate up to machine precision in either case.
}
\end{figure}

For the higher order Potts problems, 
there are also closed formulae for the evaluation of the errors $\Ec^{l:r}$
which one might consider to use directly.
Such formulae are derived in \cite{friedrich2008complexity} and in \cite{lemire2007better}
for the first and the second order Potts problem, respectively.
Using computer algebra, we have derived 
such formulae for $k = 3$ and $k=4.$ 
By precomputing moments,
the errors $\Ec^{l:r}$ can then be computed in $\Oc(1)$ per element.
The results are typically acceptable  for the piecewise 
 constant and piecewise affine linear  problems ($k = 1, 2$)
  and moderate signal lengths.
Unfortunately, 
for higher orders or longer signals, 
the approach based on the precomputation of moments is prone to numerical instability.
This is illustrated by the following experiment (cf.~\Cref{Fig Stability Potts}).
We consider the parabolic signal $f_n = n^2/100$, $n=0,...,N$
where $N = 100.$ 
The true approximation errors for the higher order Potts model of order $k=3$
are given by $\Ec^{l:r} = 0$ for all $l,r$ with $1 \leq l \leq r \leq N.$ 
\Cref{Fig Stability Potts} 
shows that
the results for $\Ec^{1:r}$ are distorted when using the approach based on the precomputation of moments,
in particular if $r$ is close to $N.$ 
The errors $\Ec^{l:N}$ are even more severely affected 
because of loss of significance.
We observe that -- in contrast to the moment precomputation approach --
the proposed method gives accurate results up to machine precision.
\end{remark}

\subsection{Stability results}

In this section, we investigate the stability of the proposed algorithm.
We start out with some basic lemmas we will need later on.
In the following, we consider the functional $\Gc_{[\Ic]}(f)$ defined in \eqref{eq:functional}
and omit the brackets and simply write $\Gc_{\Ic}(f).$

\begin{lemma}\label{lem:uniquePartitionInNhCond}
	We consider data $f \in \R^N.$
	If there is a partition $\Ic'$ and $\epsilon > 0$ such that  	 
	\begin{align}\label{eq:LowerWithEpsBand}
	\Gc_{\Ic'}(f)  <  \Gc_{\Ic}(f)    - \varepsilon   \quad \text{ for all } \quad \Ic \notin [\Ic']
	\end{align} 
	then there is an euclidean $\delta$-ball $B(f,\delta)$ 
	around $f$ such that for any $g \in B(f,\delta)$ holds:
	for data $g$ there is a unique optimal solution $u=u_{g,\Ic^*}$ of the problem \eqref{eq:penalizedProblem},
	and the corresponding partition $\Ic^*$ fulfills $[\Ic^*]=[\Ic'].$ We may choose $\delta$ by
	\begin{equation}\label{eq:ChooseDelta}
	\delta := \min \left( \frac{\varepsilon}{2   (2 \beta)^{2k} \ (\|f\| + 1/2)}, \frac{1}{2}             \right).
	\end{equation}	
\end{lemma}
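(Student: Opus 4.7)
The plan is to use the closed-form expression $\Gc_{[\Ic]}(f) = \|f\|_2^2 - f^T S_{\Ic,\beta} f + \gamma |[\Ic]|$ from \cref{lem:functionalValueMinimizer} together with the equivalence-class formalism \eqref{eq:bijective} to derive a Lipschitz estimate for $f \mapsto \Gc_{[\Ic]}(f)$ on the ball $B(f,\delta)$ whose constant is independent of $[\Ic]$. Once such a uniform estimate is available, propagating the strict gap \eqref{eq:LowerWithEpsBand} from $f$ to every $g$ in the ball is purely mechanical.

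Setting $h = g - f$ and expanding the quadratic gives the perturbation identity
\begin{equation*}
\Gc_{[\Ic]}(g) - \Gc_{[\Ic]}(f) \;=\; 2 f^T (I - S_{\Ic,\beta}) h \;+\; h^T (I - S_{\Ic,\beta}) h.
\end{equation*}
The decisive step is then a partition-independent bound on $\|I - S_{\Ic,\beta}\|_2$. Since $S_{\Ic,\beta} = (\beta^{2k} L_\Ic^T L_\Ic + \mathrm{id})^{-1}$, the eigenvalues of $I - S_{\Ic,\beta}$ take the form $\beta^{2k}\mu/(1+\beta^{2k}\mu) \le \beta^{2k}\mu$, where $\mu$ ranges over the eigenvalues of $L_\Ic^T L_\Ic$. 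Here the block-diagonal structure of $L_\Ic$ from \eqref{eq:lpc} is essential: each diagonal block is a (sub-)matrix of $\nabla^k$, and a standard composition argument yields $\|\nabla^k\|_2 \le \|\nabla\|_2^k \le 2^k$ independently of the block length. Consequently $\|L_\Ic\|_2^2 \le 4^k$, hence $\|I - S_{\Ic,\beta}\|_2 \le (2\beta)^{2k}$ for every partition $\Ic$.

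Substituting this back into the perturbation identity and combining with $\|h\|_2 \le \delta \le 1/2$ and $2\|f\|_2 + \|h\|_2 \le 2(\|f\|_2 + 1/2)$ produces the partition-uniform Lipschitz bound $|\Gc_{[\Ic]}(g) - \Gc_{[\Ic]}(f)| \le 2(2\beta)^{2k}(\|f\|_2 + 1/2)\|h\|_2$. With the explicit choice of $\delta$ in \eqref{eq:ChooseDelta}, the variation of $\Gc_{[\Ic]}$ on $B(f,\delta)$ is therefore controlled by $\epsilon$ uniformly in $\Ic$, and combining this with \eqref{eq:LowerWithEpsBand} gives $\Gc_{[\Ic']}(g) < \Gc_{[\Ic]}(g)$ for every $g \in B(f,\delta)$ and every $\Ic \notin [\Ic']$. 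Thus the optimal equivalence class for data $g$ must equal $[\Ic']$, and the uniqueness of the associated signal $u = u_{g,\Ic^*}$ follows directly from the well-definedness of $[\Ic] \mapsto u_{g,\Ic}$ recorded in \eqref{eq:bijective}, together with the explicit formula \eqref{eq:GetUFromMatrix}.

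The main obstacle, in my view, is precisely the uniform-in-$\Ic$ operator-norm bound $\|I - S_{\Ic,\beta}\|_2 \le (2\beta)^{2k}$: only a Lipschitz constant that is independent of the combinatorial choice of partition enables a single radius $\delta$ to dominate all (exponentially many) competing partitions at once. The remaining ingredients, namely the algebraic expansion of the quadratic functional and the transfer of the strict inequality from $f$ to $g$ via the triangle inequality, are routine.
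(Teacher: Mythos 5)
Your argument is correct and follows the same overall strategy as the paper's proof: establish a partition-independent operator-norm bound for the quadratic forms $\bar\Gc_{\Ic}$, expand the perturbation $\Gc_{[\Ic]}(f+h)-\Gc_{[\Ic]}(f)$, and use the uniform bound to transfer the strict gap \eqref{eq:LowerWithEpsBand} to all $g\in B(f,\delta)$, concluding via \eqref{eq:bijective}. The one genuine difference is \emph{how} the uniform bound is obtained. The paper works with the representing matrix $A_\Ic$ of $\beta^{2k}\|L_\Ic S_\Ic f\|_2^2+\|S_\Ic f-f\|_2^2$ and estimates $\|A_\Ic\|\le\beta^{2k}\|L_\Ic^TL_\Ic\|\,\|S_\Ic\|^2+\|I-S_\Ic\|^2\le(2\beta)^{2k}+1$ via the triangle inequality, the spectral containment of $S_\Ic$ in $[0,1]$, and a Schur test on $L_\Ic^TL_\Ic$. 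You instead invoke \cref{lem:functionalValueMinimizer} to identify $A_\Ic=I-S_{\Ic,\beta}$ outright and bound its eigenvalues $\beta^{2k}\mu/(1+\beta^{2k}\mu)\le\beta^{2k}\mu$ together with $\|\nabla^k\|_2\le\|\nabla\|_2^k\le 2^k$ and the block structure \eqref{eq:lpc}; this is slightly cleaner, yields the sharper bound $\|I-S_{\Ic,\beta}\|\le\min\bigl(1,(2\beta)^{2k}\bigr)$, and in fact matches the constant $(2\beta)^{2k}$ appearing in \eqref{eq:ChooseDelta} exactly, whereas the paper's estimate gives $(2\beta)^{2k}+1$. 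One small caveat, which you share with the paper's own computation: to conclude $\Gc_{[\Ic']}(g)<\Gc_{[\Ic]}(g)$ you must absorb the perturbations of \emph{both} $\Gc_{[\Ic']}$ and $\Gc_{[\Ic]}$ into the gap $\varepsilon$, and with $\delta$ as in \eqref{eq:ChooseDelta} your per-form bound $2(2\beta)^{2k}(\|f\|+1/2)\|h\|$ only gives each perturbation $\le\varepsilon$ rather than $\le\varepsilon/2$ (the paper's chain in its proof silently drops the factor $2$ arising from Cauchy--Schwarz on the cross terms). This is a harmless constant-factor issue -- halving $\delta$, or bounding the two perturbations jointly via $\|S_{\Ic}-S_{\Ic'}\|\le 1$, repairs it -- and does not affect the qualitative assertion of the lemma.
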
	

\begin{proof}
	The essential argument here relies on the continuity of the quadratic forms $x \mapsto \bar \Gc_{\Ic}(x)$ given by 
	\begin{align}\label{eq:functionalMainPart}
	\bar \Gc_{\Ic}(f) :=
	\beta^{2k}\, \| L_\Ic S_\Ic f \|_2^2 + \| S_\Ic f - f\|_2^2
	\end{align}
	which are the main parts of the $\Gc_{\Ic}$ 
	given by \eqref{eq:functional}.	 
	Each $\bar \Gc_{\Ic}$ may be represented w.r.t.\ the euclidean standard scalar product $\langle\cdot, \cdot \rangle  $ via a symmetric matrix $A_\Ic$ 
	as $\bar \Gc_{\Ic}(x) = \langle A_\Ic x  , x \rangle.$ The operator norm of $A_\Ic$ equals the norm of the corresponding bilinear form which in turn, since the $\bar \Gc_{\Ic}$ are positive (semi-definite), corresponds to
	\begin{align}\label{eq:def_A_I}
	  \|A_\Ic \| = \sup_{x:\|x\|=1} \bar \Gc_{\Ic}(x).  	  
	\end{align} 
	We first let $\delta'$ be defined by
	\begin{equation}\label{eq:ChooseDeltaPrime}
		\delta' := \min \left( \frac{1}{2} , \frac{\varepsilon}{2 \max_\Ic \|A_{\Ic}\| \ (\|f\| + 1/2) }             \right).
	\end{equation}
	We want to estimate $\Gc_{\Ic'}(f)$ from above for $g$ in a $\delta'$-ball around $f.$	 
	For brevity, we write $\Gc_{\Ic'}(g) = \bar \Gc_{\Ic'}(g) + \gamma N_{\Ic'}  $ where we let $N_{\Ic'} :=|[\Ic']|$. 
	Then we may estimate 
	\begin{align}
	\Gc_{\Ic'}(g) = \bar \Gc_{\Ic'}(g) + \gamma |[\Ic']|  
	              & = \bar \Gc_{\Ic'}(f) + \bar \Gc_{\Ic'}(f-g) - 2 \langle A_\Ic' f  , f-g \rangle + \gamma |[\Ic']| \notag \\
	              & <  \bar \Gc_{\Ic}(f)   - \varepsilon + \delta'^2 \ \|A_{\Ic'}\|   +  \delta' \ \|A_{\Ic'}\| \  \|f\|
	              + \gamma |[\Ic]|    \notag \\
	              & = \bar \Gc_{\Ic}(g) + \bar \Gc_{\Ic}(f-g) + 2 \langle A_\Ic f  , f-g \rangle 
	                  - \varepsilon + \delta'^2 \ \|A_{\Ic'}\|   +  \delta' \ \|A_{\Ic'}\| \  \|f\| + \gamma |[\Ic]|  \notag  \\
	              & \leq  \bar \Gc_{\Ic}(g)   - \varepsilon + 2 \delta' \ \max_\Ic \|A_{\Ic}\| \   \  ( \|f\|  + \delta') 
	              + \gamma |[\Ic]| \notag \\
	              & \leq  \bar \Gc_{\Ic}(g)   - \varepsilon + \varepsilon + \gamma |[\Ic]|
	              =  \bar \Gc_{\Ic}(g) + \gamma |[\Ic]|  = \Gc_{\Ic}(g). 	              \label{eq:prfLemStEq1}
	\end{align} 
	For the first inequality, we applied \eqref{eq:LowerWithEpsBand} for $\Gc_{\Ic'}(f)$ and used the assumption that $g \in B(f,\delta').$ For the second inequality, we employed \eqref{eq:ChooseDeltaPrime}.	
	In order to relate \eqref{eq:ChooseDeltaPrime} with \eqref{eq:ChooseDelta}, we now estimate  $\max_\Ic \|A_{\Ic}\|$ using basic spectral theory for self-adjoint bounded operators.
	Since $A_\Ic$ is the matrix representing the bilinear form corresponding to 
	$\bar \Gc_{\Ic},$ we may estimate using \eqref{eq:functionalMainPart}   that 
	\begin{align}\label{eq:estApart1}
		\| A_\Ic \|  \leq   \beta^{2k} \|L_\Ic^T L_\Ic\| \ \|S_\Ic\|^2 + \|I-S_\Ic\|^2, \quad \text{ for any partition }\Ic,
	\end{align}
	with the definitions of $L_\Ic$ given in \eqref{eq:lpc} and that of $S_\Ic$ given in \eqref{eq:GetUFromMatrix};
	here we only employed the triangle inequality and the submultiplicativity of operator norms.
	By \eqref{eq:GetUFromMatrix}, $S_{\Ic} =$ $(\beta^{2k} L_\Ic^T L_\Ic + \mathrm{id})^{-1}.$
	Since $L_\Ic^T L_\Ic$ is self-adjoint and positive, the spectrum  of $\beta^{2k} L_\Ic^T L_\Ic + \mathrm{id}$ is contained 
	in $[1,\infty).$ Hence, its inverse $S_{\Ic}$ has its spectrum contained in $[0,1].$ 
	Being again self-adjoint, and positive, $\|S_\Ic\| \leq 1.$ Further, since $S_{\Ic}$ has its spectrum contained in $[0,1],$    
	 $I-S_{\Ic}$ has its spectrum contained in $[0,1]$ as well. Then, with the same argument, $\|I- S_\Ic\| \leq 1.$
	In order to estimate $L_\Ic^T L_\Ic,$  we consider $L_\Ic$ in \eqref{eq:lpc}, and notice that $L_\Ic^T L_\Ic$ is block diagonal
	with entries consisting of convolutions of $k$th differences with themselves. Thus the row-sums as well as the column sums
	of $L_\Ic^T L_\Ic$ are bounded by $2^{2k}.$ The using the Schur criterion, the operator norm of $L_\Ic^T L_\Ic$ w.r.t. euclidean norm in the base space can be estimated by $2^{2k}.$  
	Summing up, we conclude invoking these estimates in \eqref{eq:estApart1} that 
	\begin{align}\label{eq:estApart2}
	\| A_\Ic \|  \leq   \beta^{2k} 2^{2k}  + 1, \quad \text{ for any partition }\Ic.
	\end{align}
	
	We now can show the assertion of the lemma. If $g \in B(f,\delta),$ then  $g \in B(f,\delta'),$
	by the estimate \eqref{eq:estApart2} relating \eqref{eq:ChooseDeltaPrime} with \eqref{eq:ChooseDelta}.
	In consequence, the estimate \eqref{eq:prfLemStEq1} applies to $g.$ Hence the solution for $g$ is unique 
	and given by  \eqref{eq:GetUFromMatrix}; in particular, the corresponding equivalence class of partitions equals $\Ic'$
	which shows the assertion. 	
\end{proof}

Next, we need a backward stability result for the QR algorithm \cite{Gentleman1973}. We present it adapted to our setup 
as needed later on.
In analogy to \eqref{eq:GetUFromMatrix},
we denote the linear mapping from $f_I$ (restricted to the interval $I$)
to the solution $u_I$ by $S_I.$ 

\begin{theorem} \label{thm:Stability}
	The QR algorithm $\tilde S_{\dint{l}{r}}$ needed for computing the  $\Ec^{\dint{l}{r}}$ is backward stable, i.e.,
	given data $f_{\dint{l}{r}}$ living on the subinterval $\dint{l}{r},$ there is a perturbation $\tilde f_{\dint{l}{r}}$ of $f_{\dint{l}{r}}$ 
	such that  
	\begin{equation}
	 \tilde S_{\dint{l}{r}} (f_{\dint{l}{r}}) = S_{\dint{l}{r}} (\tilde f_{\dint{l}{r}})  \quad \text{ with } \quad  \| \tilde f_{\dint{l}{r}} - f_{\dint{l}{r}} \|\leq \delta_{\dint{l}{r}}, 
	\end{equation}
	where $\delta_{\dint{l}{r}}$ depends on the machine precision $\tau$ and on the norm $\|f_{\dint{l}{r}}\|$ via
	\begin{equation}\label{eq:estDeltalrThm7}
		\delta_{\dint{l}{r}} \leq  6\tau \sqrt{r-l+1}\cdot\(\frac{9(r-l+1)-5}{4}-k\)(1+6\tau)^{3(r-l)-k} \| f_{\dint{l}{r}} \|.
	\end{equation}	
	Here, the QR algorithm is understood as in the analysis setup 
	of	\cite{Gentleman1973, wilkinson1965algebraic}.
\end{theorem}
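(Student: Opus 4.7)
The plan is to reduce the claim to the classical backward error analysis of a sequence of Givens rotations, following Gentleman~\cite{Gentleman1973} and Wilkinson~\cite{wilkinson1965algebraic}. Set $n = r - l + 1$ and work with the submatrix $A^{(n)} \in \R^{(2n-k)\times n}$ from \eqref{eq:AmatrixGvector}, whose upper $n \times n$ block is the identity and whose lower $(n-k) \times n$ block is the band matrix $\beta^k \nabla^k$ with bandwidth $k+1$. The quantity $\tilde S_{\dint{l}{r}}(f_{\dint{l}{r}})$ is the floating-point output of applying a prescribed sequence of Givens rotations (together with the subsequent back-substitution) to $(f_{\dint{l}{r}},0)^T$; the goal is to exhibit a perturbed input $\tilde f_{\dint{l}{r}}$ for which the \emph{exact} arithmetic of the same sequence produces identically this output, with $\|\tilde f_{\dint{l}{r}} - f_{\dint{l}{r}}\|$ controlled by \eqref{eq:estDeltalrThm7}.

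First I would invoke the per-rotation backward error estimate. Wilkinson's analysis shows that a single plane rotation computed in floating-point with unit roundoff $\tau$ produces the exact rotation of a perturbed two-vector whose relative Euclidean error is bounded by $6\tau$. This is the source of the universal constant $6\tau$ in the statement and is used via the compounding rule: after composing $m$ such rotations along a chain, the cumulative relative error obeys the standard bound $(1+6\tau)^m - 1$.

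Second I would carry out the counting of Givens rotations that enter the error chain. The triangularization of $A^{(n)}$ described in Section~\ref{sec:comp_eps_mumfordShah} processes the $n-k$ rows of the $\nabla^k$ block successively; each such row is eliminated against the current upper-triangular pivot block by $k+1$ rotations targeting its non-zero positions. Because rotations applied to a band-$p$ upper triangular matrix can extend its bandwidth by $k$, one has to trace through the resulting fill-in; a careful count gives that the longest elimination path affecting a single entry of the data vector has length at most $\tfrac{9n-5}{4} - k$, while the total depth of rotations through which any coordinate propagates is bounded by $3(n-1) - k = 3(r-l) - k$. This accounting produces the two occurrences of $n$ in the polynomial prefactor and in the exponent of \eqref{eq:estDeltalrThm7}. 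The $\sqrt{n}$ factor then arises from passing from an entrywise backward-error bound on the data vector to the Euclidean norm via the standard inequality $\|v\|_2 \leq \sqrt{n}\,\|v\|_\infty$.

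Finally I would assemble the pieces: composing the per-rotation bound along the longest chain yields a componentwise perturbation of size at most $6\tau \bigl(\tfrac{9n-5}{4}-k\bigr)(1+6\tau)^{3(r-l)-k}\|f_{\dint{l}{r}}\|_\infty$, and the Euclidean bound \eqref{eq:estDeltalrThm7} follows from the norm equivalence. The main obstacle will be the combinatorial bookkeeping of Givens rotations specific to the banded structure of $A^{(n)}$: while existence of \emph{some} polynomial-in-$n$ backward bound is a textbook consequence of Wilkinson's analysis, matching exactly the constants $\tfrac{9n-5}{4}-k$ and $3(r-l)-k$ requires tracking the order of row eliminations together with the fill-in created by each rotation. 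Once this count is verified, the remainder of the argument is a direct induction on the per-rotation error bound and requires no further numerical ingredients.
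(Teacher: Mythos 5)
Your proposal follows essentially the same route as the paper: both reduce the claim to Gentleman's backward error analysis of Givens-rotation QR applied to the right-hand-side vector $(f_{\dint{l}{r}},0)^T$, so that the computed transformed data equal the exact transform of a perturbed input $\tilde f_{\dint{l}{r}}$. The combinatorial rotation count that you flag as the main remaining obstacle is not carried out in the paper either --- the explicit bound $\mu_{\dint{l}{r}}(\tau) = 6\tau\sqrt{r-l+1}\,\bigl(\tfrac{9(r-l+1)-5}{4}-k\bigr)(1+6\tau)^{3(r-l)-k}$ for both $\|\Delta A\|_F$ and $\|\Delta f_{\dint{l}{r}}\|$ is imported directly from \cite{Gentleman1973}, specialized to the dimensions and band structure of $A$, so the gap you identify is closed there by citation rather than by re-derivation.
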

\begin{proof}
	If $r-l < k,$ then $\tilde{S}_{\dint{l}{r}}(f_{\dint{l}{r}})=S_{\dint{l}{r}}(f_{\dint{l}{r}})=0$. So we may assume $r-l \geq k$.
	Recall that calculating $\Ec^{\dint{l}{r}}$ corresponds to computing the residual vector of  the least squares problem with system matrix $A\in\R^{2(r-l+1)-k\times (r-l+1)}$ from \eqref{eq:AmatrixGvector}  and data $f_{\dint{l}{r}}$. 
	In \cite{Gentleman1973} it is shown that 
	\begin{align*}
		&\tilde{R} = \bar{Q}^T(A+\Delta A),\quad \| \Delta A \|_F \leq \mu_{\dint{l}{r}}(\tau) \| A \|_F, \\
		 &\mu_{\dint{l}{r}}(\tau) = 6\tau \sqrt{r-l+1}\cdot\(\frac{9(r-l+1)-5}{4}-k\)(1+6\tau)^{3(r-l)-k},
	\end{align*}
	where $\tilde{R}$ is the computed upper triangular matrix by means of Givens rotations and note that $\bar{Q}^T$ is the orthogonal matrix that is the product of exact Givens rotations we apply. Analogously, for the data vector it is
	shown in \cite{Gentleman1973} that
	\begin{align*}
		\widetilde{Q^T f_{\dint{l}{r}}} = \bar{Q}^T(f_{\dint{l}{r}}+\Delta f_{\dint{l}{r}}), \quad \| \Delta f_{\dint{l}{r}}\| \leq \mu_{\dint{l}{r}}(\tau) \| f_{\dint{l}{r}} \|,
	\end{align*}
	hence
	$\tilde{S}_{\dint{l}{r}}(f_{\dint{l}{r}}) = S_{\dint{l}{r}}(f_{\dint{l}{r}}+\Delta f_{\dint{l}{r}})$ which implies \eqref{eq:estDeltalrThm7}.
\end{proof}

\begin{corollary}\label{cor:StableOnPartition}
	We consider bounded data $f \in \mathbb R^N,$ $\|f\|<C.$  
	For any partition $\Ic$ considered in the proposed algorithm for the higher order Potts and Mumford-Shah problem, 
	there is a perturbation $\tilde f$ of $f$ such that  
	\begin{equation}
	\tilde S_{\Ic} (f) = S_{\Ic} (\tilde f)  \quad \text{ where } \quad  \| \tilde f - f \|< \delta_\Ic(\tau),
	\end{equation}
	where $\delta_\Ic (\tau) ^2 = \sum_i \delta_{l_i:r_i} (\tau)^2$ depends on the machine precision $\tau$
    via the dependence of the  $\delta_{l_i:r_i}$ on $\tau$ given in Theorem~\ref{thm:Stability}
    and on $C,$ but not on $f.$
	More precisely, $\delta_\Ic(\tau)$ can be estimated from above by  
	\begin{equation}\label{eq:estDeltaThm8}
		\delta_\Ic(\tau)^2 \leq 
		36C^2\tau^2\sum_i (r_i-l_i+1)\(	\frac{9(r_i-l_i+1)-5}{4}-k \)^2(1+6\tau)^{6(r-l)-2k}.
	\end{equation}
\end{corollary}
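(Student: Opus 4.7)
The plan is to apply Theorem~\ref{thm:Stability} segment-by-segment and stitch the resulting local backward perturbations together into a single global perturbation of $f$.

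First, I would exploit the block-diagonal structure. By \eqref{eq:lpc}, $L_\Ic$ is block-diagonal with one block per segment $I_i=\dint{l_i}{r_i}$, so $S_\Ic=(\beta^{2k}L_\Ic^T L_\Ic+\mathrm{id})^{-1}$ is block-diagonal as well. The computed map $\tilde S_\Ic$ inherits the same structure, since the Givens-rotation scheme of \cref{sec:comp_eps_mumfordShah,sec:potts_solver} is executed independently on each segment (the matrix $A$ restricted to a segment in \eqref{eq:epsLRmatrix_small}, resp.\ $B$ in \eqref{eq:systemMatrixPoly}, has no coupling across segment boundaries). Consequently, both maps act componentwise: $(S_\Ic f)_{I_i}=S_{I_i}(f_{I_i})$ and $(\tilde S_\Ic f)_{I_i}=\tilde S_{I_i}(f_{I_i})$.

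Next, I would apply Theorem~\ref{thm:Stability} to each segment individually. This yields, for each $i$, a perturbation $\tilde f_{I_i}$ of $f_{I_i}$ with $\tilde S_{I_i}(f_{I_i})=S_{I_i}(\tilde f_{I_i})$ and $\|\tilde f_{I_i}-f_{I_i}\|\leq\delta_{l_i:r_i}(\tau)$, where $\delta_{l_i:r_i}(\tau)$ is bounded via \eqref{eq:estDeltalrThm7}. Segments of length at most $k$ are trivial: both $S_{I_i}$ and $\tilde S_{I_i}$ return the exact zero residual and we may take $\tilde f_{I_i}=f_{I_i}$. Now define the global perturbation $\tilde f\in\mathbb R^N$ by $\tilde f|_{I_i}:=\tilde f_{I_i}$ for every $i$. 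The block-diagonal decomposition from the first step immediately gives $\tilde S_\Ic(f)=S_\Ic(\tilde f)$.

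To bound $\|\tilde f-f\|$, I would use that the segments $\{I_i\}$ form a disjoint partition of $\{1,\ldots,N\}$, which gives the Pythagorean identity
\[
\|\tilde f-f\|^2 \;=\; \sum_i \|\tilde f_{I_i}-f_{I_i}\|^2 \;\leq\; \sum_i \delta_{l_i:r_i}(\tau)^2 \;=:\;\delta_\Ic(\tau)^2,
\]
which is exactly the claimed $\delta_\Ic(\tau)^2=\sum_i\delta_{l_i:r_i}(\tau)^2$. Substituting the explicit bound \eqref{eq:estDeltalrThm7} from Theorem~\ref{thm:Stability} into each summand and replacing $\|f_{I_i}\|$ by the uniform upper bound $C$ (since $\|f_{I_i}\|\leq\|f\|<C$) yields the explicit estimate \eqref{eq:estDeltaThm8}.

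There is essentially no hard step: the block-diagonal structure of $L_\Ic$ reduces the problem to a direct sum of the per-segment statements of Theorem~\ref{thm:Stability}. The only subtle points are verifying that the algorithm's QR update indeed realises $\tilde S_\Ic$ in this block-diagonal way (so that the global backward-perturbation claim is meaningful rather than only an intra-segment statement), and being careful that the $C$-bound in \eqref{eq:estDeltaThm8} is uniform in $f$ precisely because $\|f_{I_i}\|\leq\|f\|<C$ holds for every segment simultaneously.
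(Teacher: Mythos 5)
Your proposal is correct and follows essentially the same route as the paper: the paper's proof is a one-line appeal to Theorem~\ref{thm:Stability} applied interval-by-interval (since the algorithm runs the QR scheme independently on each segment), with \eqref{eq:estDeltaThm8} following from \eqref{eq:estDeltalrThm7}. You simply make explicit the block-diagonal structure, the Pythagorean aggregation over disjoint segments, and the uniform bound $\|f_{I_i}\|\leq\|f\|<C$, all of which the paper leaves implicit.
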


\begin{proof}
	The statement is a consequence of Theorem~\ref{thm:Stability} since, for fixed partition, 
	the proposed algorithm computes optimal solutions 
	 $u_{f,\Ic} = S_{\Ic, \beta} f$ (cf. \eqref{eq:GetUFromMatrix})
	 using the QR algorithm on intervals.
	In particular, \eqref{eq:estDeltaThm8}  is a consequence of \eqref{eq:estDeltalrThm7}.	
\end{proof}

For the formulation of the next statement, we use the notation $\tilde \Gc_{\Ic}$ to denote the algorithm to compute  
the energy $\Gc_{\Ic}$ given by \eqref{eq:functional}. Further, we use the notation $g(\tau)$ to bound the approximation error between $\tilde \Gc_{\Ic}$ and $\Gc_{\Ic}$ for all $\Ic$ in dependence of the precision $\tau.$

\begin{proposition}\label{prop:Stab4EpsConditionOnF}
	We consider bounded data $f \in \R^N,$  $\|f\|<C$,  and assume that \eqref{eq:LowerWithEpsBand} is fulfilled for $f.$
	Let 
	\begin{equation} \label{eq:DeltaAstTauProp9}
		 \delta^*(\tau) = \max_\Ic \delta_\Ic (\tau) \leq 6C\tau N^{\frac{3}{2}} \(\frac{9N-5}{4}-k \)(1+6\tau)^{3(N-1)-k}
	\end{equation}	 
	for $\delta_{\mathcal{I}}(\tau)$ in Corollary \ref{cor:StableOnPartition} and assume that 
    $\tau$ is small enough such that $\delta^*(\tau) < \delta/2$ 
    with $\delta$ given by \eqref{eq:ChooseDelta}
    and such that $g(\tau)\leq \varepsilon/4$ with $\varepsilon$ given in \eqref{eq:LowerWithEpsBand}. 	
	Then, the  higher order Potts and Mumford-Shah problem \eqref{eq:penalizedProblem} has a unique minimizer $u_f,$ 
	and the proposed algorithm for computing this minimizer of the higher order Potts and Mumford-Shah problem \eqref{eq:penalizedProblem} is backward stable in the sense that  
	\begin{equation}\label{eq:StableUnderCond}
	\tilde u_f = u_{\tilde f} \quad \text{ where } \quad  \| \tilde f - f \|< \delta^*(\tau). 
	\end{equation}
	Here, $\tilde u_f$ is the result produced by the proposed algorithm for data $f$ and $u_{\tilde f}$ is the (unique) solution of 
	the higher order Potts and Mumford-Shah problem \eqref{eq:penalizedProblem}
	for perturbed data $\tilde f.$	 
\end{proposition}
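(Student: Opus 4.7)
The plan is to combine two ingredients: \emph{(i)}~the robustness of the optimal partition class under small perturbations (Lemma~\ref{lem:uniquePartitionInNhCond}) and \emph{(ii)}~the backward stability of the QR-based energy computations on each partition (Theorem~\ref{thm:Stability} and Corollary~\ref{cor:StableOnPartition}). The strategy is to first argue that the algorithm identifies the correct equivalence class $[\Ic^*]$ of the optimal partition, and then to apply the QR backward stability on this fixed partition to obtain a backward stable $\tilde f$ whose solution coincides with the algorithm's output.

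First I would apply Lemma~\ref{lem:uniquePartitionInNhCond} directly: the hypothesis \eqref{eq:LowerWithEpsBand} yields a partition $\Ic^*$ and a radius $\delta$ from \eqref{eq:ChooseDelta} such that for every $g\in B(f,\delta)$ the problem \eqref{eq:penalizedProblem} has a unique minimizer with partition class $[\Ic^*].$ In particular, taking $g=f$ gives uniqueness of $u_f,$ and taking $g=\tilde f$ with $\|\tilde f-f\|<\delta^*(\tau)<\delta/2<\delta$ (by hypothesis on $\tau$) ensures that the unique minimizer $u_{\tilde f}$ is also governed by the same equivalence class, i.e.\ $u_{\tilde f}=S_{\Ic^*}\tilde f.$

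Next I would verify that the algorithm indeed picks the class $[\Ic^*].$ The algorithm computes approximations $\tilde\Gc_{\Ic}(f)$ whose deviation from $\Gc_{\Ic}(f)$ is bounded uniformly by $g(\tau)\leq\varepsilon/4.$ Combining with \eqref{eq:LowerWithEpsBand}, for any $\Ic\notin[\Ic^*],$
\begin{equation*}
\tilde\Gc_{\Ic}(f)\ \geq\ \Gc_{\Ic}(f)-\varepsilon/4\ >\ \Gc_{\Ic^*}(f)+\varepsilon-\varepsilon/4\ \geq\ \tilde\Gc_{\Ic^*}(f)+\varepsilon/2\ >\ \tilde\Gc_{\Ic^*}(f).
\end{equation*}
Hence the algorithm's dynamic program selects a partition lying in $[\Ic^*],$ and on this class the reconstructed signal is $\tilde u_f=\tilde S_{\Ic^*}(f).$

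Finally, I would invoke Corollary~\ref{cor:StableOnPartition} on this selected partition: there is $\tilde f$ with $\|\tilde f-f\|\leq \delta_{\Ic^*}(\tau)\leq \delta^*(\tau)$ such that $\tilde S_{\Ic^*}(f)=S_{\Ic^*}(\tilde f).$ Since $\delta^*(\tau)<\delta/2<\delta,$ the first step applies to this particular $\tilde f$ and yields $u_{\tilde f}=S_{\Ic^*}\tilde f.$ Chaining the three identities gives $\tilde u_f=\tilde S_{\Ic^*}(f)=S_{\Ic^*}(\tilde f)=u_{\tilde f},$ which is exactly \eqref{eq:StableUnderCond}. The bound on $\delta^*(\tau)$ stated in \eqref{eq:DeltaAstTauProp9} follows from maximising the estimate \eqref{eq:estDeltalrThm7} over all subintervals $\dint{l}{r}\subseteq\dint{1}{N}$ using $r-l+1\leq N$ and $\|f_{\dint{l}{r}}\|\leq\|f\|<C.$

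The main obstacle is the first step: one must carefully separate the two error sources and ensure the tolerance $\varepsilon/4$ for energy comparisons is decoupled from the $\delta/2$ tolerance needed for Lemma~\ref{lem:uniquePartitionInNhCond}. The two hypotheses on $\tau$ make this decoupling clean; once the correct partition class has been identified, the remaining argument is a direct concatenation of the QR backward stability bound with the perturbation robustness of $[\Ic^*].$
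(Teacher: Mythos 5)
Your proof is correct and follows essentially the same route as the paper's: \cref{lem:uniquePartitionInNhCond} supplies uniqueness and the robustness of the optimal class $[\Ic']$, the bound $g(\tau)\leq\varepsilon/4$ shows the dynamic program selects a partition in that class, and \cref{cor:StableOnPartition} together with a second application of \cref{lem:uniquePartitionInNhCond} identifies $\tilde u_f$ with $u_{\tilde f}$. The only deviation is cosmetic: you compare the computed energies $\tilde\Gc_{\Ic}(f)$ directly via the forward bound $g(\tau)$, whereas the paper routes this comparison through the backward-stability perturbations $\tilde f_\Ic$ and the quadratic-form estimate of \eqref{eq:prfLemStEq1}, using $\delta^*(\tau)<\delta/2$ and $g(\tau)\leq\varepsilon/4$ in one chain; your cleaner decoupling of the two tolerances is a mild simplification, not a different argument.
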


\begin{remark}
  A more explicit relation of $\varepsilon$ and the precision $\tau$ without using the $\delta$'s   
 sufficient for the assumptions of Proposition~\ref{prop:Stab4EpsConditionOnF} to hold is given by
 \begin{align}
	\label{eq:delta*1}
 (1+6\tau)^{3N-k}\tau &< \frac{\varepsilon}{12CN(2\beta)^{2k}(C+\frac{1}{2})\(\frac{9N-5}{4}-k\)}
\quad\text{if } \epsilon \leq (2\beta)^{2k}(C+\frac{1}{2}),
 \\
 \label{eq:delta*2}
 (1+6\tau)^{3N-k}\tau &<\frac{1}{12CN\(\frac{9N-5}{4}-k\)}
 \qquad\qquad\qquad ~~
\text{if } \epsilon > (2\beta)^{2k}(C+\frac{1}{2}),
 \\
\label{eq:g(tau)}
\mu_{[1,N]}(\tau) &< \frac{1}{2} \( \frac{4C^2N+\epsilon}{C^2N} \)^{\frac{1}{2}}-1
 \end{align}
  w.r.t. $\mu_{[1,N]}(\tau)$ from the proof of \Cref{thm:Stability}. Conditions \eqref{eq:delta*1} and \eqref{eq:delta*2} are sufficient for $\delta^*(\tau) < \delta /2$ which is an immediate implication of combining
\eqref{eq:LowerWithEpsBand} and \eqref{eq:DeltaAstTauProp9}. 
From \eqref{eq:g(tau)} follows $g(\tau) \leq \epsilon/4$ since: 
 for any admissible $\dint{l}{r}$ we have
 \begin{align*}
	  \abs{\| \widetilde{Q^Tf_{\dint{l}{r}}} \|^2 - \| Q^Tf_{\dint{l}{r}} \|^2 } &\leq
	  \( \|Q^Tf_{\dint{l}{r}} \| + \|\widetilde{Q^Tf_{\dint{l}{r}}} \| \)
	  \abs{\| Q^Tf_{\dint{l}{r}}\| - \| \widetilde{Q^Tf_{\dint{l}{r}} }\|  }\\
	  &\leq	  \( C + \|\widetilde{Q^Tf_{\dint{l}{r}}} \| \) 
			  \| Q^Tf_{\dint{l}{r}} - \widetilde{Q^Tf_{\dint{l}{r}}}\|
 \end{align*}
 and 
 \[
	 \| \widetilde{Q^Tf_{\dint{l}{r}}} \| \leq \| Q^Tf_{\dint{l}{r}}\| + \| Q^Tf_{\dint{l}{r}} -\widetilde{Q^Tf_{\dint{l}{r}}}\|
	 \leq C+\| Q^Tf_{\dint{l}{r}} -\widetilde{Q^Tf_{\dint{l}{r}}}\|
	 \leq C+ \mu_{[1,N]} C.
 \]
Combining both yields
 \begin{align*}
	 g(\tau) &= \abs{
		 	\tilde{\Gc}_\Ic(f) - \Gc_\Ic(f)
	 	  }
	 	    \leq \sum_{\dint{l}{r}\in\Ic} \abs{ \tilde{\Ec}^{\dint{l}{r}}-\Ec^{\dint{l}{r}}}
	 	    \\&\leq \sum_{\dint{l}{r}\in\Ic} \abs{\| \widetilde{Q^Tf_{\dint{l}{r}}} \|^2 - \| Q^Tf_{\dint{l}{r}} \|^2 }
	 	    \leq NC^2 \mu_{[1,N]}(\tau) \(2+\mu_{[1,N]}(\tau) \).
 \end{align*}
An easy computation shows that \eqref{eq:g(tau)} is equivalent to requiring the latter to be smaller than $\epsilon /4$.
\end{remark}

\begin{proof}[Proof of Proposition~\ref{prop:Stab4EpsConditionOnF}]	
	By the proof of Theorem~\ref{thm:AlmostEverywhereUniqueness}, the solution $u_f$ of \eqref{eq:penalizedProblem} is unique for data $f.$ We denote the equivalence class of partitions corresponding to this optimal solution $u_f$ by its representer $\Ic'.$
	As a first step, we show that the solution $\tilde u_f$ computed by the proposed algorithm for data $f$ has partition $\Ic'$
	as well. 
	To that end, we first notice that by Corollary~\ref{cor:StableOnPartition}, 
	there is $\tilde f_\Ic$ with  $\| \tilde f_\Ic - f \| < \delta_{\Ic}(\tau)$  
	such that  
	$ \tilde S_{\Ic} (f)  =  S_{\Ic} (\tilde f_\Ic),$ for any partition $\Ic$ which is considered by the algorithm.
	In particular, using the notation of 
    Theorem~\ref{thm:Stability},
	\begin{align}\notag
	\|\tilde S_{\dint{l}{r}} (f_{\dint{l}{r}}) - S_{\dint{l}{r}} (f_{\dint{l}{r}}) \|    
		& =	\| S_{\dint{l}{r}} (\tilde f_{\dint{l}{r}}) - S_{\dint{l}{r}} (f_{\dint{l}{r}}) \|     \\
		& \leq \| S_{\dint{l}{r}} \|  \	\| \tilde f_{\dint{l}{r}} - f_{\dint{l}{r}} \| \leq 	
		\| \tilde f_{\dint{l}{r}} - f_{\dint{l}{r}} \| <   \delta_{\dint{l}{r}}. \label{eq:EstPertOnInterval}
	\end{align}
	For the second before last inequality, we used that  $\| S_{\dint{l}{r}} \| \leq 1$ which we have shown in the proof of Lemma~\ref{lem:uniquePartitionInNhCond}. 
	In consequence, summing over all intervals of $\Ic$  of length at least $k+1,$
	we obtain from \eqref{eq:EstPertOnInterval} that 
	\begin{equation}
	   \|\tilde S_{\Ic} (f) - S_{\Ic} (f) \| = \| S_{\Ic} (\tilde f_\Ic) - S_{\Ic} (f) \| 
	      \leq  \|\tilde f_\Ic - f \|  < \delta_\Ic(\tau).
	\end{equation}	
	for any partition $\Ic$ which is considered by the algorithm.
	Then, using the notation $\tilde \Gc_{\Ic}$ for the algorithmic variant of $\Gc_{\Ic},$
	we have (with the notation as in Lemma~\ref{lem:uniquePartitionInNhCond}) that
   	\begin{align}
   	\tilde \Gc_{\Ic'}(\tilde f_{\Ic'}) 
   	&\leq \Gc_{\Ic'} (\tilde f_{\Ic'}) + |\tilde \Gc_{\Ic'}(\tilde f_{\Ic'}) - \Gc_{\Ic'} (\tilde f_{\Ic'}) | \notag \\
   	&\leq \Gc_{\Ic'} (\tilde f_{\Ic'}) + g(\tau) \notag  \\  
   	&< \Gc_{\Ic}(f)   - \varepsilon + \delta_{\Ic'}(\tau)^2 \ \|A_{\Ic'}\|   +  \delta_{\Ic'}(\tau) \ \|A_{\Ic'}\| \  \|f\|  + g(\tau)  \notag \\  	 	 
   	&\leq \Gc_{\Ic}(\tilde f_\Ic)   - \varepsilon + 2 \delta^\ast(\tau) \ \max_\Ic \|A_{\Ic}\| \   \  ( \|f\|  + \delta^\ast(\tau)) 
   	+ g(\tau) \notag  \\
   	&\leq  \Gc_{\Ic}(\tilde f_\Ic) - \varepsilon +  \varepsilon/2  +  g(\tau) \notag \\   	
   	&\leq \tilde \Gc_{\Ic}(\tilde f_\Ic) - \varepsilon +  \varepsilon/2  + 2 g(\tau)
   	\leq \tilde \Gc_{\Ic}(\tilde f_\Ic).  \label{eq:EstFbyAlgo}
   	\end{align} 
   	Here, the third inequality is the central estimate which is obtained in analogy to the first part of the computation in \eqref{eq:prfLemStEq1}
    replacing the role of the vector $g$ there (not to be confused with $g(\tau)$) by that of the perturbation $\tilde f_\Ic'$ of $f$ here. The fourth inequality  is obtained in analogy to the second part of the computation in \eqref{eq:prfLemStEq1}
    with the role of the vector $g$ there replaced by the perturbation $\tilde f_\Ic$ of $f.$ 
    The second before last and last inequality follow by our assumptions made on $\tau.$
	Together, \eqref{eq:EstFbyAlgo} tells us 
	that the solution $\tilde u_f$ computed by the proposed algorithm has partition $\Ic'$ and  
	\begin{equation}\label{eq:FirstStepStabPro}
	      \tilde u_f   =   \tilde S_{\Ic'} (f).   
	\end{equation}
	Using again Corollary~\ref{cor:StableOnPartition}, we have 
	 \begin{equation}\label{eq:SecondStepStabPro}
	   \tilde S_{\Ic'} (f)  =  S_{\Ic'} (\tilde f) \quad \text{ for } \quad  \| \tilde f - f \|
	   < \delta_{\Ic'}(\tau)  \leq \delta^*(\tau), 
	 \end{equation}	
	with the perturbation $\tilde f$ of $f.$    
	We have that $ \| \tilde f - f \|<  \delta^*(\tau) < \delta $ with  
	$\delta$ defined by \eqref{eq:ChooseDelta}. 
	Therefore, we may now employ 
    Lemma~\ref{lem:uniquePartitionInNhCond}
    to conclude that
    the  solution of 
    the higher order Potts and Mumford-Shah problem \eqref{eq:penalizedProblem}
    denoted by $u_{\tilde f}$ agrees with the optimal solution for the partition $\Ic'$  
    which we have denoted by $u_{\tilde f,\Ic'} =  S_{\Ic} (\tilde f),$ i.e.,
    \begin{equation}
    	u_{\tilde f} = u_{\tilde f,\Ic'} =  S_{\Ic'} (\tilde f).
    \end{equation}    
    Combined with \eqref{eq:FirstStepStabPro} and \eqref{eq:SecondStepStabPro}, this shows \eqref{eq:StableUnderCond} which completes the proof. 	
\end{proof}

\begin{lemma}\label{lem:MeasureEpsSetQuadForm}
   We consider a nonzero quadratic form $H$ in a ball of radius $C$ in $\mathbb R^N$.
   Then, the Lebesgue measure $\lambda$ of the set 
   $H_{\varepsilon,c} = \{x: \|x\| \leq C, c-\varepsilon < H(x) < c + \varepsilon \}$
   fulfills	
   \begin{equation}
   \lambda(H_{\varepsilon,c}) \leq 2 \sqrt{ \frac{\varepsilon}{ \| A \|}} \ C^{N-1} 
   \end{equation}
   where $\|A\|$ denotes the spectral norm of the representing matrix $A$ of $H.$    
\end{lemma}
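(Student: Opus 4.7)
The plan is to reduce the multivariate problem to a one-dimensional one by diagonalizing $A$ and applying Fubini. Since $H$ is a quadratic form, its representing matrix $A$ may be taken symmetric, so by the spectral theorem $A = U^\top D U$ with $U$ orthogonal and $D = \operatorname{diag}(\lambda_1,\ldots,\lambda_N)$, ordered so that $|\lambda_1| = \|A\|$. The change of variables $x = U^\top y$ is volume-preserving and maps the ball $\{\|x\| \leq C\}$ onto itself, while transforming $H$ into the diagonal form $\tilde H(y) = \sum_i \lambda_i y_i^2$. It therefore suffices to prove the inequality for this diagonalized form.

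Split the coordinates as $y = (y_1, y')$ with $y' = (y_2,\ldots,y_N)$ and put $R(y') = \sum_{i\geq 2} \lambda_i y_i^2$. Fix $y'$ with $\|y'\| \leq C$ and consider the one-dimensional slice
\[
S_{y'} = \{\, y_1 \in \mathbb R : \lambda_1 y_1^2 \in (c - R(y') - \varepsilon,\; c - R(y') + \varepsilon)\,\}.
\]
This is a set of the form $\{y_1 : y_1^2 \in (a,b)\}$ whose length equals $2(\sqrt{b} - \sqrt{\max(a,0)})$, with $b - a = 2\varepsilon/|\lambda_1| = 2\varepsilon/\|A\|$. Using the elementary inequality $\sqrt{b} - \sqrt{a} \leq \sqrt{b-a}$, valid for $0 \leq a \leq b$, together with a trivial case check when $a < 0$ (a single symmetric interval of half-width $\sqrt{b}$), one obtains the uniform bound
\[
|S_{y'}| \leq 2\sqrt{2\varepsilon/\|A\|},
\]
independent of $y'$.

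Finally, Fubini gives $\lambda(H_{\varepsilon,c}) \leq \int_{\{\|y'\| \leq C\}} |S_{y'}|\,dy'$, and the $(N-1)$-dimensional measure of the projection of $B(0,C)$ onto the $y'$-hyperplane is controlled by a constant times $C^{N-1}$. Combining these two estimates yields a bound of exactly the claimed shape $\lesssim \sqrt{\varepsilon/\|A\|}\cdot C^{N-1}$. The only delicate point is the case split in the one-dimensional step, and matching the precise constant in the stated inequality, which may require absorbing a $\sqrt 2$ and a dimensional geometric factor into the prefactor; nothing conceptually deeper is needed since $|\lambda_1| = \|A\|$ is precisely the quantity that controls the worst one-dimensional slice.
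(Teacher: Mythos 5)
Your proof follows essentially the same route as the paper's: diagonalize $A$ orthogonally, isolate the coordinate of the largest-modulus eigenvalue $|\lambda_1|=\|A\|$, bound the one-dimensional slice by $O(\sqrt{\varepsilon/\|A\|})$ uniformly in the remaining variables, and integrate over the other $N-1$ coordinates via Fubini. The $\sqrt{2}$ and the dimensional factor you flag are in fact also glossed over in the paper's own argument (which integrates over the $\ell^\infty$-cube yet writes $C^{N-1}$), so your version is, if anything, the more careful one; since the lemma is only used to show the exceptional set has measure $O(\sqrt{\varepsilon})$, these constants are immaterial.
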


\begin{proof}
Without loss of generality, we may use a orthogonal transformation of the coordinate system to represent 
$H$ by $H(x) = \sum_i \alpha_i x_i^2$ with the eigenvalues $\alpha_i$ of the corresponding representing matrix of the quadratic form.
We sort the $\alpha_i$ by modulus, i.e., $|\alpha_1| \geq |\alpha_2| \geq \ldots.$ 
With repect to this coordinate system, we consider the $C$-ball with respect to the 
infinity norm $D=\{x: \|x\|_\infty \leq C\}$. We distinguish the eigenvalue $\alpha_1$ of highest modulus
which agrees with the norm $\|A\|$ of the representing matrix $A$ of $H.$
We estimate the Lebesgue measure of $\{x: c-\varepsilon < H(x) < c + \varepsilon \}$ on the larger set $D$
which provides an upper bound for that of $H_{\varepsilon,c}.$
To this end, we notice that,
for fixed $x_2,\ldots,x_N,$ we may estimate the univariate Lebesgue measure $\lambda^1$ of the section 
$\{x_1:  \leq C, c-\varepsilon < H(x_1, x_2,\ldots,x_N ) < c + \varepsilon\}$
\begin{equation}
  \lambda^1 \left(
  c + \sum_{i=2}^N  \frac{\alpha_i}{|\alpha_1|} x_i^2  - \varepsilon    
  < \sign(\alpha_1)  \ x_1^2 <  
   c + \sum_{i=2}^N  \frac{\alpha_i}{|\alpha_1|} x_i^2 + \varepsilon
  \right)
  \leq 2 \sqrt{ \frac{\varepsilon}{|\alpha_1|}}.
\end{equation}
(Notice that if $\alpha_1= 0$ the quadratic form would be zero.)
Hence, on $D$,
the Lebesgue measure of $\{x: c-\varepsilon < H(x) < c + \varepsilon \}$ is bounded by 
$2 \sqrt{ \frac{\varepsilon}{|\alpha_1|}} \ C^{N-1}$ which implies the assertion of the lemma. 
\end{proof}

\begin{theorem}
  Let $\varepsilon >0$ be given and assume that the precision $\tau$ fulfills the assumptions of 
  Proposition~\ref{prop:Stab4EpsConditionOnF}. We consider the set of bounded data $\{f:\|f\|\leq C\}$ in $\mathbb R^N$ for some $C>0.$ 
  Then, up to a set of Lebesgue measure $2 {\binom{\sigma_{N,k}}{2}} \sqrt{ \frac{\varepsilon}{ \sup_\Ic \| A_\Ic \|}} \ C^{N-1}$, $A_\Ic$ given by \eqref{eq:def_A_I},
  $\sigma_{N,k}$ the number of different means to choose intervals of length at least $k+1$ from a (discrete) set of length $N,$
  the proposed algorithm for computing a minimizer of the higher order Potts and Mumford-Shah problem \eqref{eq:penalizedProblem} is backward stable in the sense that  
   \begin{equation}\label{eq:StableUnderCond2}
   \tilde u_f = u_{\tilde f} \quad \text{ where } \quad  \| \tilde f - f \|< \delta^*(\tau), 
   \end{equation}
   where $\delta^*(\tau)$ is given by \eqref{eq:DeltaAstTauProp9}.
   Here, $\tilde u_f$ is the result produced by the proposed algorithm for data $f$ and $u_{\tilde f}$ is the (unique) solution of 
   the higher order Potts and Mumford-Shah problem \eqref{eq:penalizedProblem}
   for perturbed data~$\tilde f.$	 
\end{theorem}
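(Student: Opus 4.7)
The plan is to identify the set of data $f$ for which the hypothesis \eqref{eq:LowerWithEpsBand} of Proposition~\ref{prop:Stab4EpsConditionOnF} \emph{fails} and to bound its Lebesgue measure by a union of quadratic-form sublevel-strips, to each of which Lemma~\ref{lem:MeasureEpsSetQuadForm} applies. On the complement, Proposition~\ref{prop:Stab4EpsConditionOnF} yields the claimed backward stability $\tilde u_f = u_{\tilde f}$ with $\|\tilde f - f\| < \delta^*(\tau)$ directly.

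First, I would describe the bad set. For $f \in \R^N$ with $\|f\| \leq C,$ let $\Ic^*$ be an optimal partition of \eqref{eq:penalizedProblem} for $f$ (such a partition exists by the existence theorem). The hypothesis \eqref{eq:LowerWithEpsBand} holds at $f$ iff $\Gc_{[\Ic^*]}(f) < \Gc_{[\Ic]}(f) - \varepsilon$ for every equivalence class $[\Ic] \neq [\Ic^*].$ Hence the bad set is contained in
\[
\mathcal{B} \;=\; \bigcup_{[\Ic_1] \neq [\Ic_2]} \Bigl\{ f : \|f\| \leq C,\ |\Gc_{[\Ic_1]}(f) - \Gc_{[\Ic_2]}(f)| \leq \varepsilon \Bigr\},
\]
where the union runs over unordered pairs of distinct equivalence classes of partitions on $\dint{1}{N}$ (of which there are $\binom{\sigma_{N,k}}{2}$ by the definition of $\sigma_{N,k}$).

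Second, I would apply Lemma~\ref{lem:MeasureEpsSetQuadForm} to each pair. By \eqref{eq:functional}, for fixed $[\Ic_1] \neq [\Ic_2]$ the map
\[
H_{12}(f) \;=\; \Gc_{[\Ic_1]}(f) - \Gc_{[\Ic_2]}(f) - \gamma\bigl(|[\Ic_1]| - |[\Ic_2]|\bigr)
\]
is a quadratic form in $f,$ with representing matrix $A_{[\Ic_1]} - A_{[\Ic_2]}$ in the sense of \eqref{eq:def_A_I}. Using the one-to-one correspondence $[\Ic] \to L_\Ic$ established in \eqref{eq:bijective}, the two quadratic forms $\bar \Gc_{[\Ic_1]}$ and $\bar \Gc_{[\Ic_2]}$ cannot coincide because they would otherwise force $L_{\Ic_1} = L_{\Ic_2}$ and hence $[\Ic_1] = [\Ic_2].$ Thus $H_{12}$ is a \emph{nonzero} quadratic form, and Lemma~\ref{lem:MeasureEpsSetQuadForm} applied with $c = -\gamma(|[\Ic_1]| - |[\Ic_2]|)$ gives
\[
\lambda\Bigl(\{f : \|f\|\leq C,\ |\Gc_{[\Ic_1]}(f) - \Gc_{[\Ic_2]}(f)| \leq \varepsilon \}\Bigr) \;\leq\; 2\sqrt{\tfrac{\varepsilon}{\|A_{[\Ic_1]} - A_{[\Ic_2]}\|}}\, C^{N-1}.
\]
Replacing the denominator by $\sup_\Ic \|A_\Ic\|$ (which is the stated form) yields a single uniform bound that can be pulled out of the union.

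Third, a union bound over the $\binom{\sigma_{N,k}}{2}$ pairs gives
\[
\lambda(\mathcal{B}) \;\leq\; 2\binom{\sigma_{N,k}}{2}\sqrt{\tfrac{\varepsilon}{\sup_\Ic \|A_\Ic\|}}\, C^{N-1},
\]
which is the claimed exceptional measure. For every $f$ with $\|f\| \leq C$ and $f \notin \mathcal{B},$ condition \eqref{eq:LowerWithEpsBand} holds with the prescribed $\varepsilon,$ and since $\tau$ was assumed to meet the hypotheses of Proposition~\ref{prop:Stab4EpsConditionOnF}, that proposition gives the existence of $\tilde f$ with $\|\tilde f - f\| < \delta^*(\tau)$ such that $\tilde u_f = u_{\tilde f},$ which is exactly \eqref{eq:StableUnderCond2}. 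The main obstacle I anticipate is bookkeeping the quadratic form representation and the passage from "partitions" to "equivalence classes" so that the count $\binom{\sigma_{N,k}}{2}$ is tight and the nonvanishing of $H_{12}$ comes directly from \eqref{eq:bijective}; all other ingredients are routine given Lemma~\ref{lem:MeasureEpsSetQuadForm} and Proposition~\ref{prop:Stab4EpsConditionOnF}.
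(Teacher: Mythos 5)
Your proposal follows essentially the same route as the paper: bound the set where hypothesis \eqref{eq:LowerWithEpsBand} fails by a union over pairs of distinct equivalence classes of $\varepsilon$-strips of the difference quadratic forms $\Gc_{[\Ic_1]}-\Gc_{[\Ic_2]}$, apply Lemma~\ref{lem:MeasureEpsSetQuadForm} to each, take a union bound over the $\binom{\sigma_{N,k}}{2}$ pairs, and invoke Proposition~\ref{prop:Stab4EpsConditionOnF} on the complement. You are in fact slightly more explicit than the paper about why each difference form is nonzero (via \eqref{eq:bijective}); note only that your step of replacing $\|A_{[\Ic_1]}-A_{[\Ic_2]}\|$ in the denominator by $\sup_\Ic\|A_\Ic\|$ enlarges the denominator and hence shrinks the bound, a looseness the paper's own statement shares.
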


\begin{proof}
   We proceed similar to the proof of 
   Theorem~\ref{thm:AlmostEverywhereUniqueness}
   to show that the set of those data which do not fulfill \eqref{eq:LowerWithEpsBand}	
   have Lebesgue measure smaller or equal to $2 {\binom{\sigma_{N,k}}{2}} \sqrt{ \frac{\varepsilon}{ \sup_\Ic \| A_\Ic \|}} \ C^{N-1}.$
   We choose two different partitions $\Ic,\Ic'$ with $[\Ic] \neq [\Ic'],$
   i.e., their equivalence classes do not agree, and consider the corresponding quadratic forms  
   $\Gc_{\Ic},\Gc_{\Ic'}.$  
   Their difference $\Gc_{\Ic}-\Gc_{\Ic'}$ is again a quadratic form (plus a constant).
   By Lemma~\ref{lem:MeasureEpsSetQuadForm}, the set where $\Gc_{\Ic}$ and $\Gc_{\Ic'}$ 
   are closer than $\varepsilon$ has Lebesgue measure $2 \sqrt{ \frac{\varepsilon}{|\alpha_1|}} \ C^{N-1}.$  
   Iterating this for all ${\binom{\sigma_{N,k}}{2}}$ different bilinear forms $\Gc_{\Ic}-\Gc_{\Ic'}$
   shows that the Lebesgue measure of those data where  \eqref{eq:LowerWithEpsBand} is not fulfilled can be estimated from above
   by the quantity written in the formulation of the theorem. 
   To the complementary set, we may now apply   
   Proposition~\ref{prop:Stab4EpsConditionOnF} 
   to conclude the assertion of the theorem.
\end{proof}

\section{Numerical study}

\label{sec:numerical_results}

%%%%% FIGURE WIDTH
\def\figwidth{0.32\textwidth}
%%%%%

% !TEX root = ../../../higherOrderBZ.tex
\begin{figure}[]
	\captionsetup[subfigure]{justification=centering}
	\centering
	\begin{subfigure}[t]{\figwidth}
		\includegraphics[width=1\textwidth]{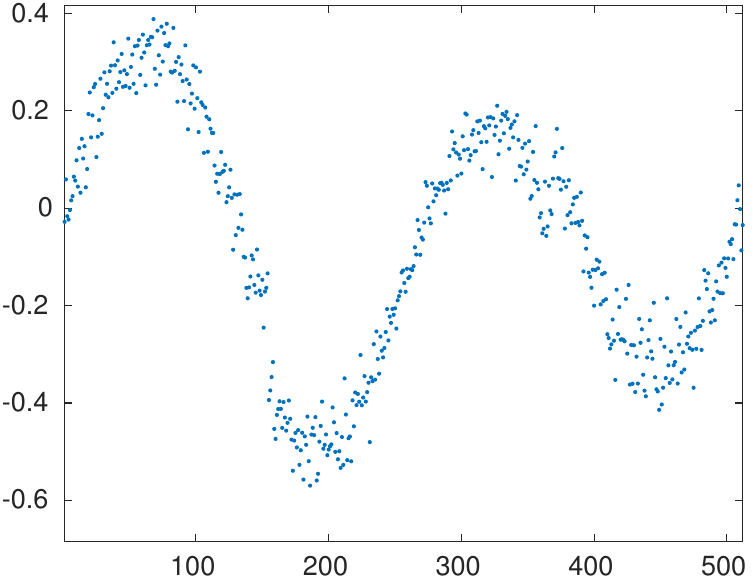}
		\subcaption{Noisy data}
	\end{subfigure}
	\begin{subfigure}[t]{\figwidth}
		\includegraphics[width=1\textwidth]{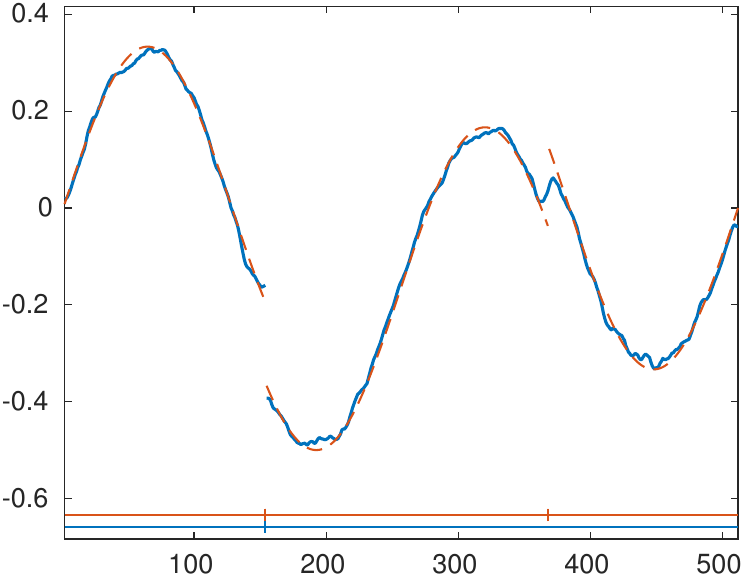}
		\subcaption{
		$(\Pc_{1,\beta,\gamma})$,
		$\beta= \protect\input{Experiments/Synthetic_Data/Donoho_Heavisine_A/beta_1.tex}$,
		$\gamma =\protect\input{Experiments/Synthetic_Data/Donoho_Heavisine_A/gamma_1.tex}$\\
		$\varepsilon_{\text{rel}} = 
		\protect\input{Experiments/Synthetic_Data/Donoho_Heavisine_A/rerr_1.tex}$, 
		$R_{\text{ind}} = 
		\protect\input{Experiments/Synthetic_Data/Donoho_Heavisine_A/randi_1.tex}$
		}
	\end{subfigure}~
	\begin{subfigure}[t]{\figwidth}
		\includegraphics[width=1\textwidth]{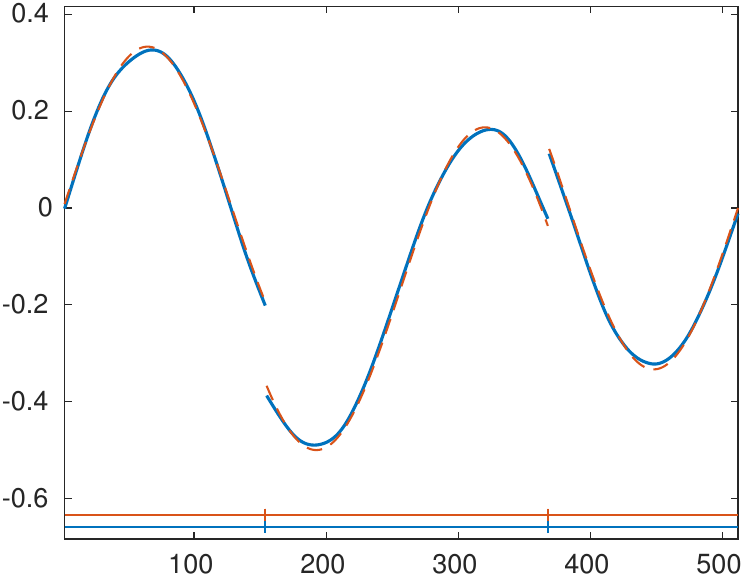}
		\subcaption{
			$(\Pc_{2,\beta,\gamma})$,
			$\beta= \protect\input{Experiments/Synthetic_Data/Donoho_Heavisine_A/beta_2.tex}$,
			$\gamma =\protect\input{Experiments/Synthetic_Data/Donoho_Heavisine_A/gamma_2.tex}$\\
			$\varepsilon_{\text{rel}} = 
			\protect\input{Experiments/Synthetic_Data/Donoho_Heavisine_A/rerr_2.tex}$,
			$R_{\text{ind}} = 
			\protect\input{Experiments/Synthetic_Data/Donoho_Heavisine_A/randi_2.tex}$			 
		}
	\end{subfigure}\\[1em]
	\begin{subfigure}[t]{\figwidth}
		\includegraphics[width=1\textwidth]{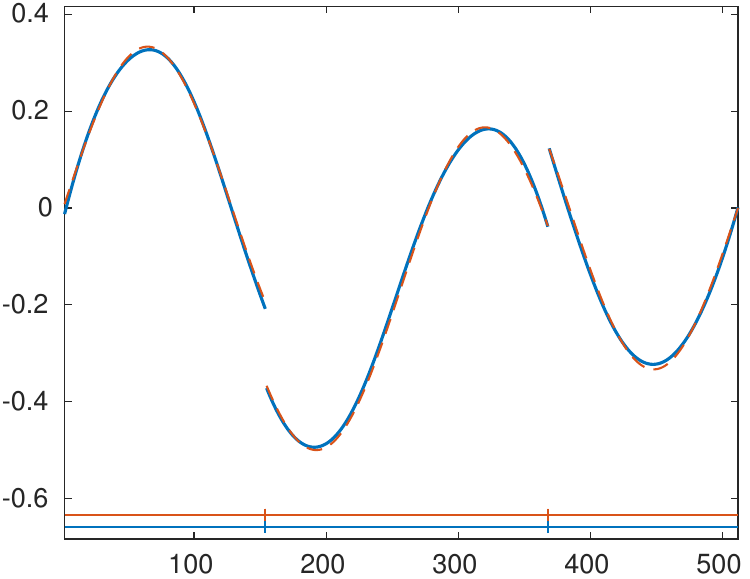}
		\subcaption{
			$(\Pc_{3,\beta,\gamma})$,
			$\beta= \protect\input{Experiments/Synthetic_Data/Donoho_Heavisine_A/beta_3.tex}$,
			$\gamma =\protect\input{Experiments/Synthetic_Data/Donoho_Heavisine_A/gamma_3.tex}$\\
			$\varepsilon_{\text{rel}} = 
			\protect\input{Experiments/Synthetic_Data/Donoho_Heavisine_A/rerr_3.tex}$, 
			$R_{\text{ind}} = 
			\protect\input{Experiments/Synthetic_Data/Donoho_Heavisine_A/randi_3.tex}$	
		}		
	\end{subfigure}~
		\begin{subfigure}[t]{\figwidth}
	\includegraphics[width=1\textwidth]{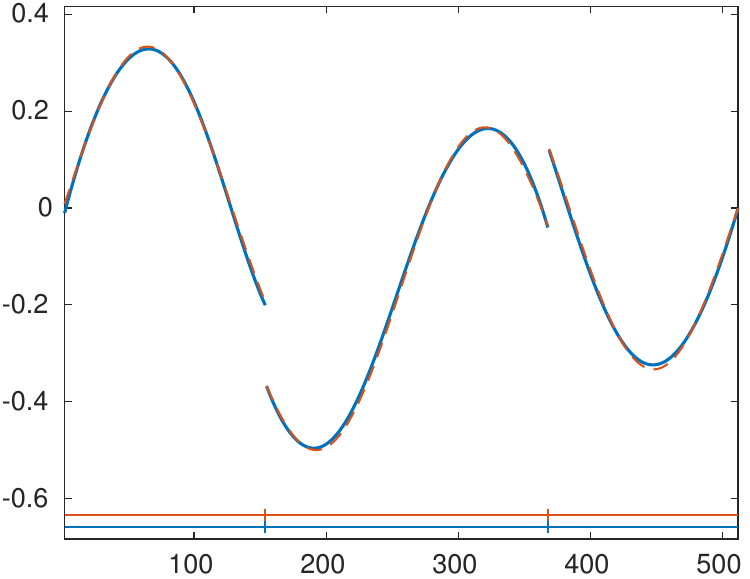}
	\subcaption{
			$(\Pc_{4,\beta,\gamma})$,
			$\beta= \protect\input{Experiments/Synthetic_Data/Donoho_Heavisine_A/beta_4.tex}$,
			$\gamma =\protect\input{Experiments/Synthetic_Data/Donoho_Heavisine_A/gamma_4.tex}$\\
			$\varepsilon_{\text{rel}} = 
			\protect\input{Experiments/Synthetic_Data/Donoho_Heavisine_A/rerr_4.tex}$, 
			$R_{\text{ind}} = 
			\protect\input{Experiments/Synthetic_Data/Donoho_Heavisine_A/randi_4.tex}$			
	}	
	\end{subfigure}~
	\begin{subfigure}[t]{\figwidth}
	\includegraphics[width=1\textwidth]{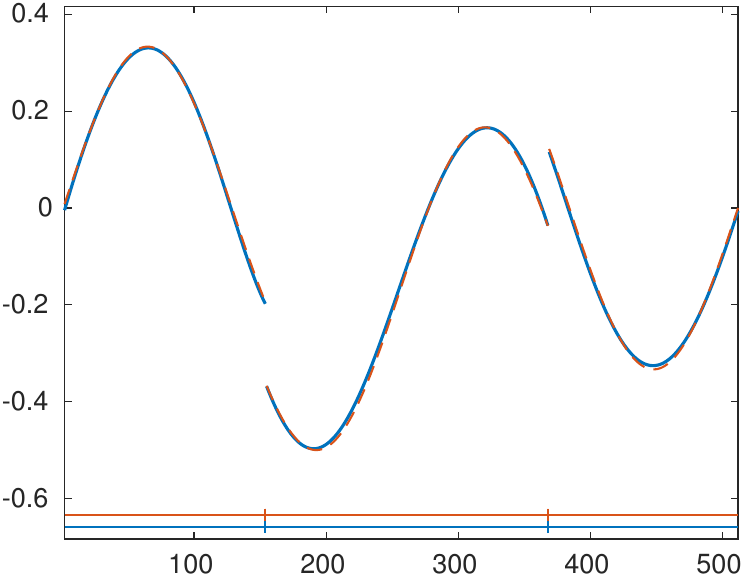}
	\subcaption{
			$(\Pc_{5,\beta,\gamma})$,
			$\beta= \protect\input{Experiments/Synthetic_Data/Donoho_Heavisine_A/beta_5.tex}$,
			$\gamma =\protect\input{Experiments/Synthetic_Data/Donoho_Heavisine_A/gamma_5.tex}$\\
			$\varepsilon_{\text{rel}} = 
			\protect\input{Experiments/Synthetic_Data/Donoho_Heavisine_A/rerr_5.tex}$, 
			$R_{\text{ind}} = 
			\protect\input{Experiments/Synthetic_Data/Donoho_Heavisine_A/randi_5.tex}$			
	}			
	\end{subfigure}
%	\begin{subfigure}[]{\figwidth}
%		\includegraphics[width=1\textwidth]{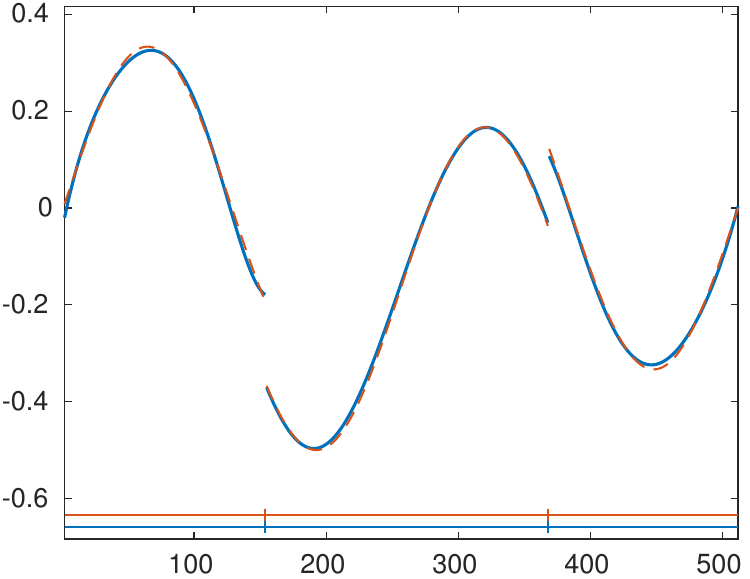}
%		\subcaption{
%			$(\Pc_{6,\beta,\gamma})$,
%			$\beta= \protect\input{Experiments/Synthetic_Data/Donoho_Heavisine_A/beta_6.tex}$,
%			$\gamma =\protect\input{Experiments/Synthetic_Data/Donoho_Heavisine_A/gamma_6.tex}$\\
%			$\varepsilon_{\text{rel}} = 
%			\protect\input{Experiments/Synthetic_Data/Donoho_Heavisine_A/rerr_6.tex}$, 
%			$R_{\text{ind}} = 
%			\protect\input{Experiments/Synthetic_Data/Donoho_Heavisine_A/randi_6.tex}$			
%		}			
%	\end{subfigure}	
	\caption{
		\label{Fig Heavy Sine}
		Reconstructions of ``Heavy Sine''-signal from noisy data.
		\emph{(a)} Data corrupted by Gaussian noise of level~$\protect\input{Experiments/Synthetic_Data/Donoho_Heavisine_A/noise_lvl.tex}$. \emph{(b--f)} Reconstructions for higher order Mumford-Shah and Potts models.
	}
\end{figure}

% !TEX root = ../../../higherOrderBZ.tex
\begin{figure}[]
	\captionsetup[subfigure]{justification=centering}
	\centering
	\begin{subfigure}[t]{\figwidth}
		\includegraphics[width=1\textwidth]{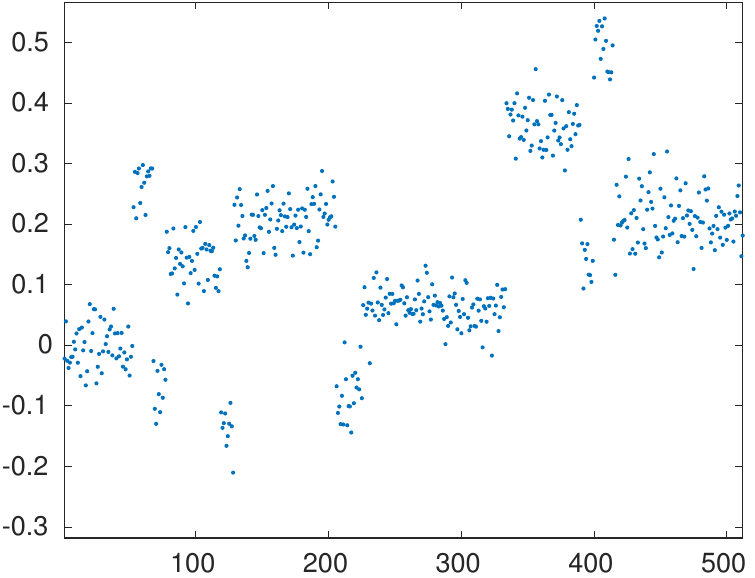}
		\subcaption{Noisy data}
	\end{subfigure}~
	\begin{subfigure}[t]{\figwidth}
		\includegraphics[width=1\textwidth]{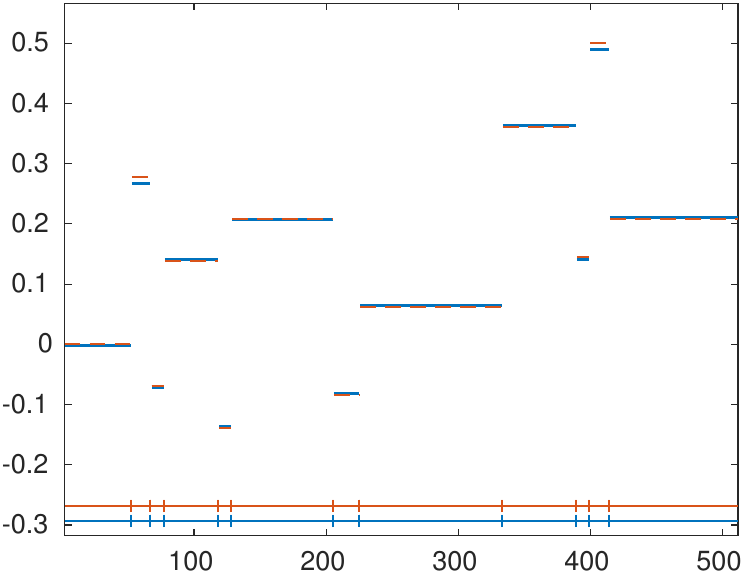}
		\subcaption{
		$(\Pc_{1,\beta,\gamma})$,
		$\beta= \protect\input{Experiments/Synthetic_Data/Donoho_Blocks_A/beta_1.tex}$,
		$\gamma =\protect\input{Experiments/Synthetic_Data/Donoho_Blocks_A/gamma_1.tex}$\\
		$\varepsilon_{\text{rel}} = 
		\protect\input{Experiments/Synthetic_Data/Donoho_Blocks_A/rerr_1.tex}$,
		$R_{\text{ind}} = 
		\protect\input{Experiments/Synthetic_Data/Donoho_Blocks_A/randi_1.tex}$		
		}
	\end{subfigure}~
	\begin{subfigure}[t]{\figwidth}
		\includegraphics[width=1\textwidth]{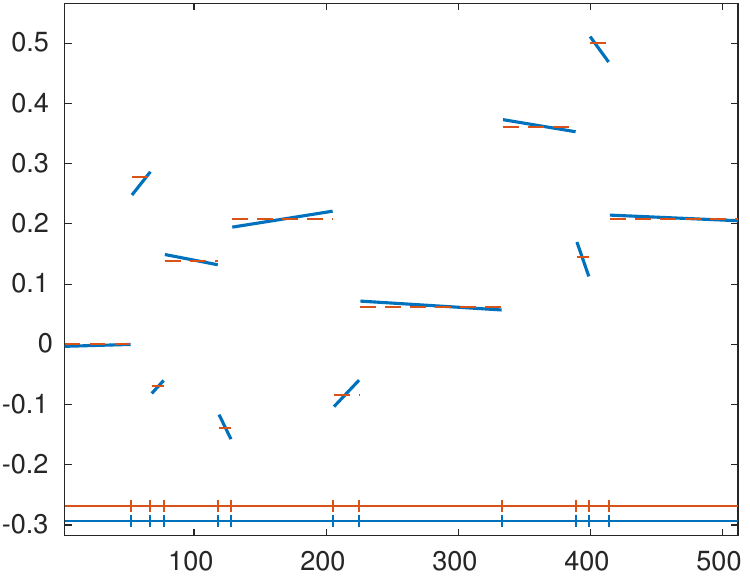}
		\subcaption{
			$(\Pc_{2,\beta,\gamma})$,
			$\beta= \protect\input{Experiments/Synthetic_Data/Donoho_Blocks_A/beta_2.tex}$,
			$\gamma =\protect\input{Experiments/Synthetic_Data/Donoho_Blocks_A/gamma_2.tex}$\\
			$\varepsilon_{\text{rel}} = 
			\protect\input{Experiments/Synthetic_Data/Donoho_Blocks_A/rerr_2.tex}$, 
			$R_{\text{ind}} = 
			\protect\input{Experiments/Synthetic_Data/Donoho_Blocks_A/randi_2.tex}$		
		}		
	\end{subfigure}\\[1em]
	\begin{subfigure}[t]{\figwidth}
		\includegraphics[width=1\textwidth]{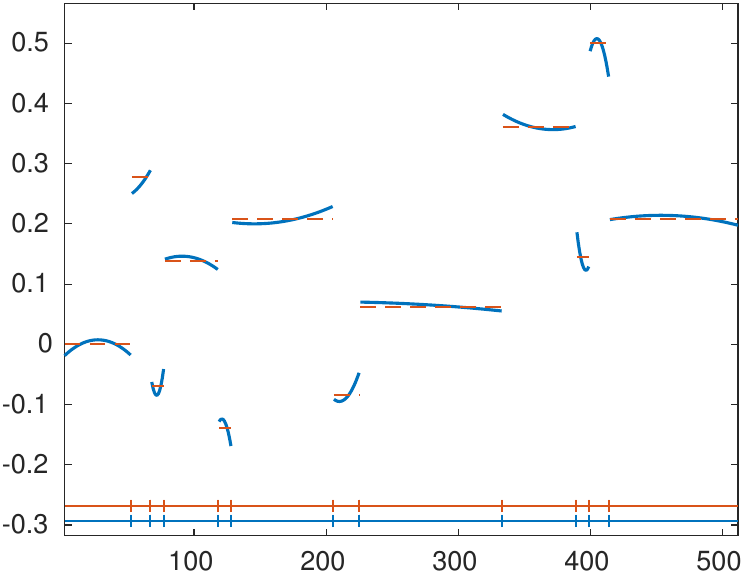}
		\subcaption{
			$(\Pc_{3,\beta,\gamma})$,
			$\beta= \protect\input{Experiments/Synthetic_Data/Donoho_Blocks_A/beta_3.tex}$,
			$\gamma =\protect\input{Experiments/Synthetic_Data/Donoho_Blocks_A/gamma_3.tex}$\\
			$\varepsilon_{\text{rel}} = 
			\protect\input{Experiments/Synthetic_Data/Donoho_Blocks_A/rerr_3.tex}$, 
			$R_{\text{ind}} = 
			\protect\input{Experiments/Synthetic_Data/Donoho_Blocks_A/randi_3.tex}$		
		}		
	\end{subfigure}~
	\begin{subfigure}[t]{\figwidth}
	\includegraphics[width=1\textwidth]{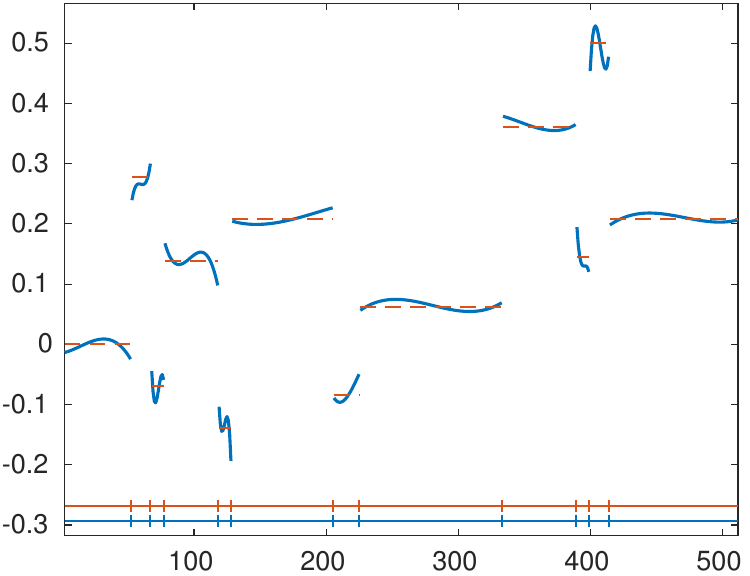}
	\subcaption{
			$(\Pc_{4,\beta,\gamma})$,
			$\beta= \protect\input{Experiments/Synthetic_Data/Donoho_Blocks_A/beta_4.tex}$,
			$\gamma =\protect\input{Experiments/Synthetic_Data/Donoho_Blocks_A/gamma_4.tex}$\\
			$\varepsilon_{\text{rel}} = 
			\protect\input{Experiments/Synthetic_Data/Donoho_Blocks_A/rerr_4.tex}$, 
			$R_{\text{ind}} = 
			\protect\input{Experiments/Synthetic_Data/Donoho_Blocks_A/randi_4.tex}$		
	}	
	\end{subfigure}~
	\begin{subfigure}[t]{\figwidth}
	\includegraphics[width=1\textwidth]{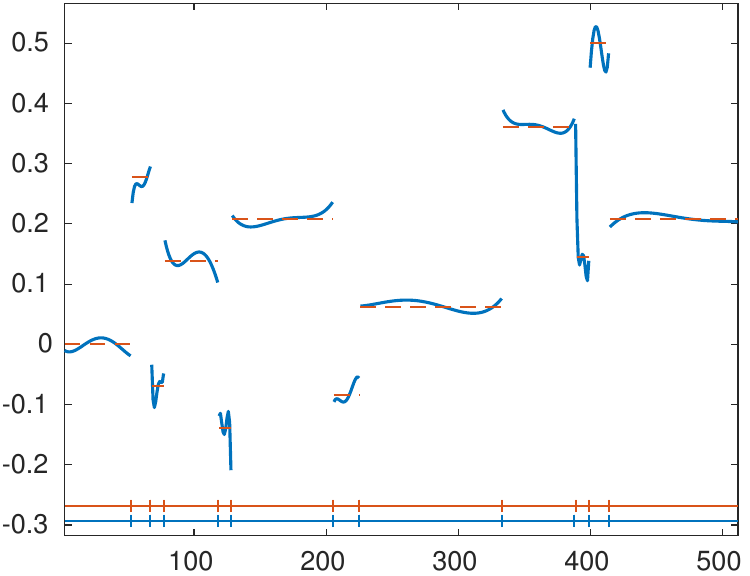}
	\subcaption{
			$(\Pc_{5,\beta,\gamma})$,
			$\beta= \protect\input{Experiments/Synthetic_Data/Donoho_Blocks_A/beta_5.tex}$,
			$\gamma =\protect\input{Experiments/Synthetic_Data/Donoho_Blocks_A/gamma_5.tex}$\\
			$\varepsilon_{\text{rel}} = 
			\protect\input{Experiments/Synthetic_Data/Donoho_Blocks_A/rerr_5.tex}$, 
			$R_{\text{ind}} = 
			\protect\input{Experiments/Synthetic_Data/Donoho_Blocks_A/randi_5.tex}$		
	}				
	\end{subfigure}
%~
%	\begin{subfigure}{\figwidth}
%		\includegraphics[width=1\textwidth]{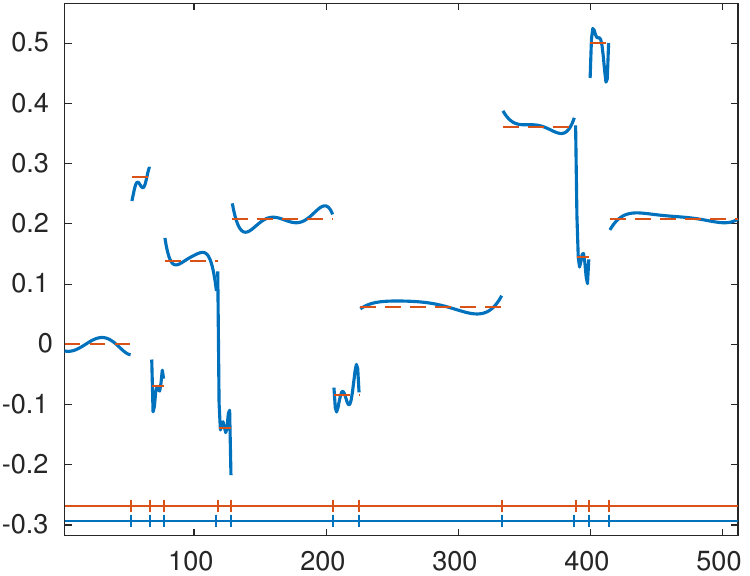}
%		\subcaption{
%			$(\Pc_{6,\beta,\gamma})$,
%			$\beta= \protect\input{Experiments/Synthetic_Data/Donoho_Blocks_A/beta_6.tex}$,
%			$\gamma =\protect\input{Experiments/Synthetic_Data/Donoho_Blocks_A/gamma_6.tex}$\\
%			$\varepsilon_{\text{rel}} = 
%			\protect\input{Experiments/Synthetic_Data/Donoho_Blocks_A/rerr_6.tex}$, 
%			$R_{\text{ind}} = 
%			\protect\input{Experiments/Synthetic_Data/Donoho_Blocks_A/randi_6.tex}$	
%		}			
%	\end{subfigure}	
	\caption{
		\label{Fig Blocks}
		Reconstructions of ``Blocks''-signal from noisy data. %The relative reconstruction error is denoted by $\varepsilon_{rel}$.
		\emph{(a)} Data corrupted by Gaussian noise of level~$\protect\input{Experiments/Synthetic_Data/Donoho_Blocks_A/noise_lvl.tex}$. \emph{(b--f)} Reconstructions using higher order Mumford-Shah and Potts models.
	}
\end{figure}

We conduct a numerical study
on the reconstruction quality of the considered higher order Mumford-Shah and Potts models,
and on  the computation time of the proposed solvers.
We implemented the proposed solvers for the higher order Mumford-Shah and Potts models  in 
C++ with wrappers to Matlab using mex-files. The pseudocode is given as Algorithm~\ref{alg:solverPenalized} in the appendix.
All experiments were conducted on a desktop computer with 3.1 GHz Intel Core i5-2400 processor 
and 8 GB RAM.

\subsection{Reconstruction results}

% !TEX root = ../../../higherOrderBZ.tex
\begin{figure}
	\centering
	\footnotesize
\begin{subfigure}{0.32\textwidth}
\subcaption*{$\eta=\protect\input{Experiments/Parameter_Choice/Elasticity_B/noise_lvl_1.tex}$}
\makebox[0pt][r]{\makebox[3em]{\raisebox{40pt}{\rotatebox[origin=c]{90}{Noisy data}}}}%
\includegraphics[width=\textwidth]{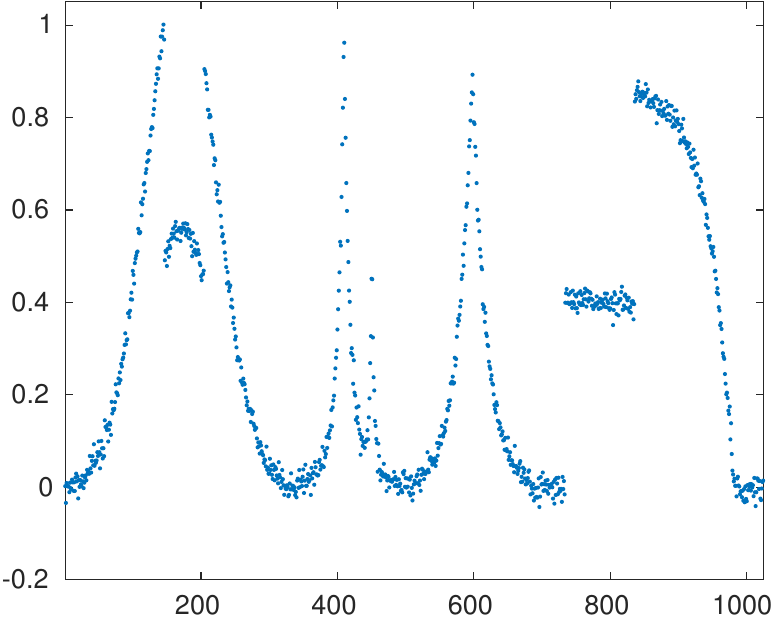}\\[0.5em]
\makebox[0pt][r]{\makebox[3em]{\raisebox{40pt}{\rotatebox[origin=c]{90}{$k=1$}}}}%
\includegraphics[width=\textwidth]{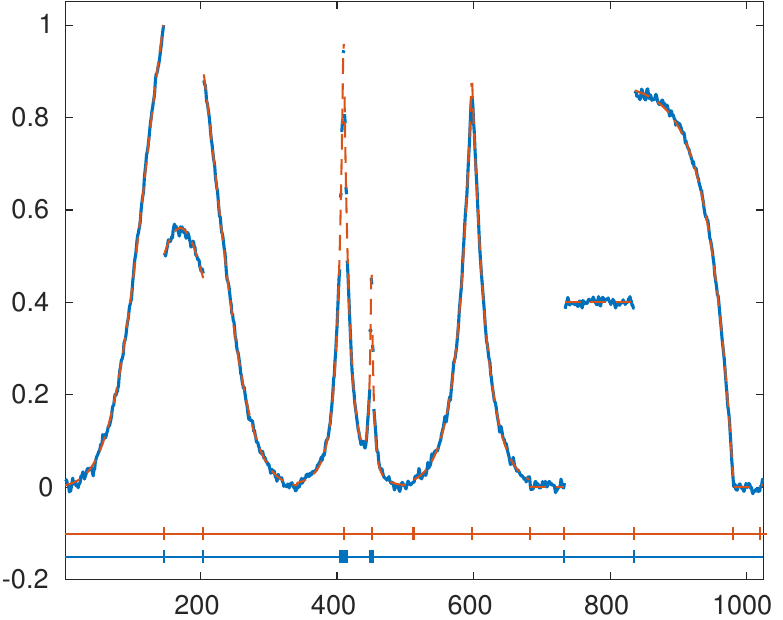}\\[0.5em]
%\subcaption{\label{}}
\makebox[0pt][r]{\makebox[3em]{\raisebox{40pt}{\rotatebox[origin=c]{90}{$k=2$}}}}%
\includegraphics[width=\textwidth]{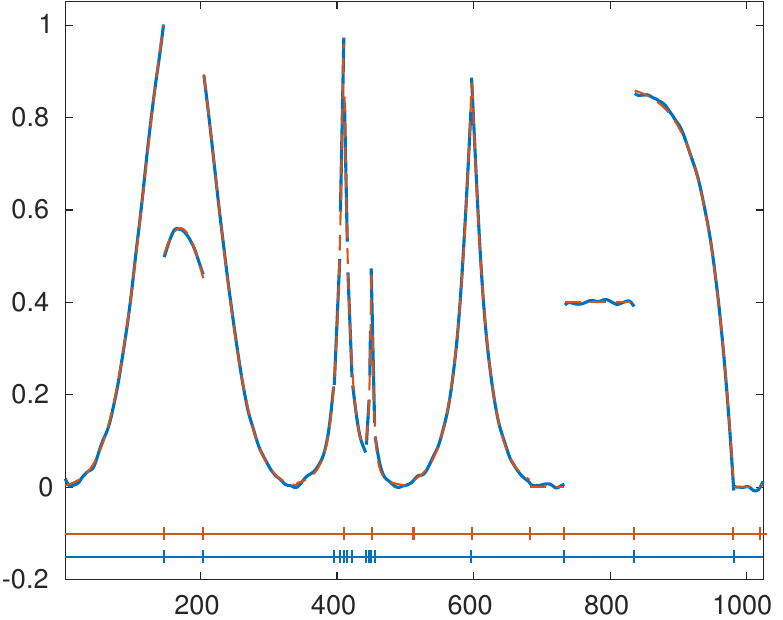}\\[0.5em]
%\subcaption{\label{}}
\makebox[0pt][r]{\makebox[3em]{\raisebox{40pt}{\rotatebox[origin=c]{90}{$k=3$}}}}%
\includegraphics[width=\textwidth]{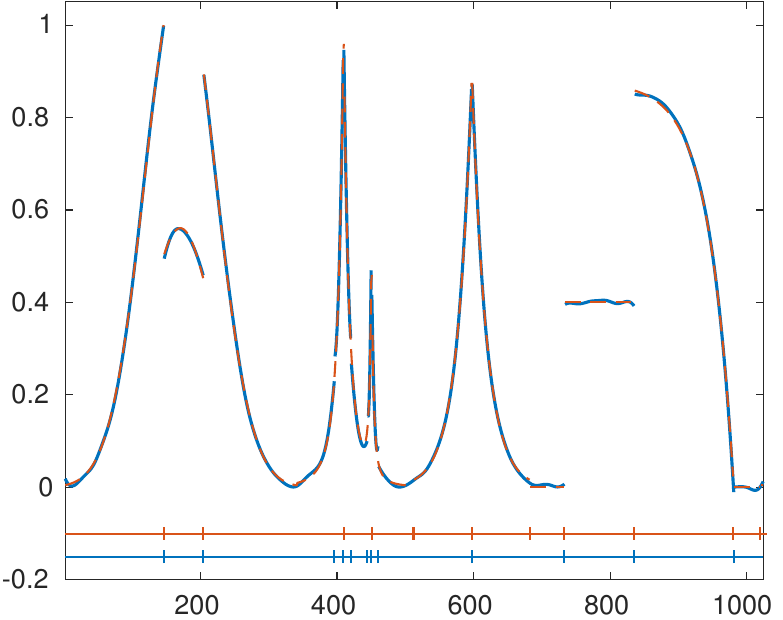}\\[0.5em]
	%\subcaption{\label{}}
\makebox[0pt][r]{\makebox[3em]{\raisebox{40pt}{\rotatebox[origin=c]{90}{$k=4$}}}}%
\includegraphics[width=\textwidth]{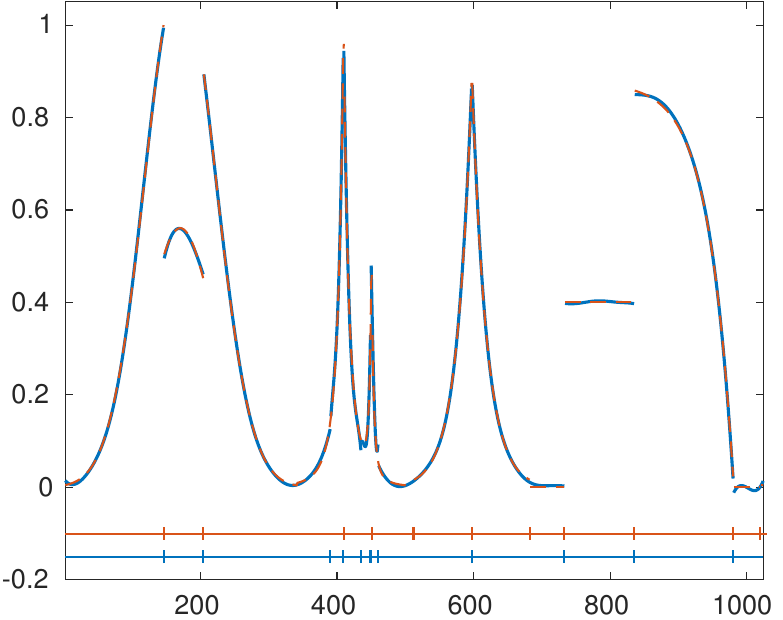}
%\subcaption{\label{}}	
\end{subfigure}
\begin{subfigure}{0.32\textwidth}
\subcaption*{$\eta=\protect\input{Experiments/Parameter_Choice/Elasticity_B/noise_lvl_2.tex}$}
\includegraphics[width=\textwidth]{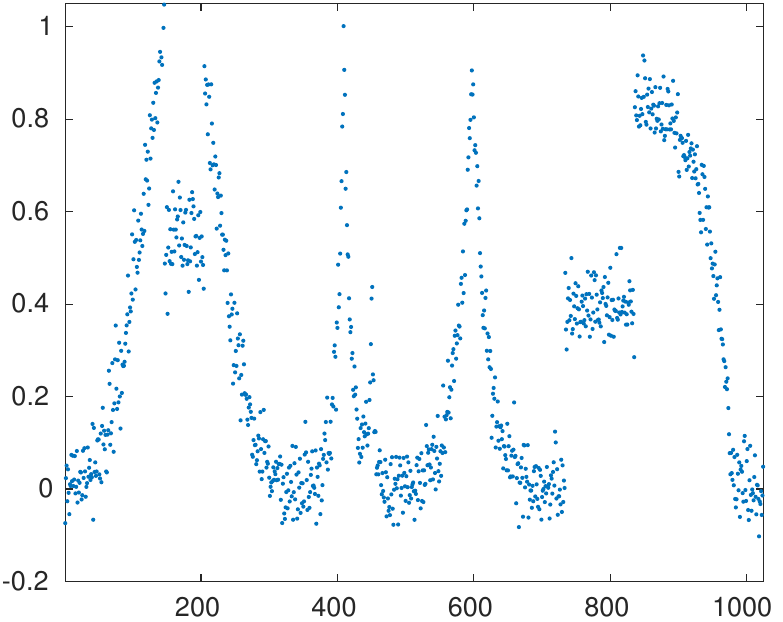}\\[0.5em]
\includegraphics[width=\textwidth]{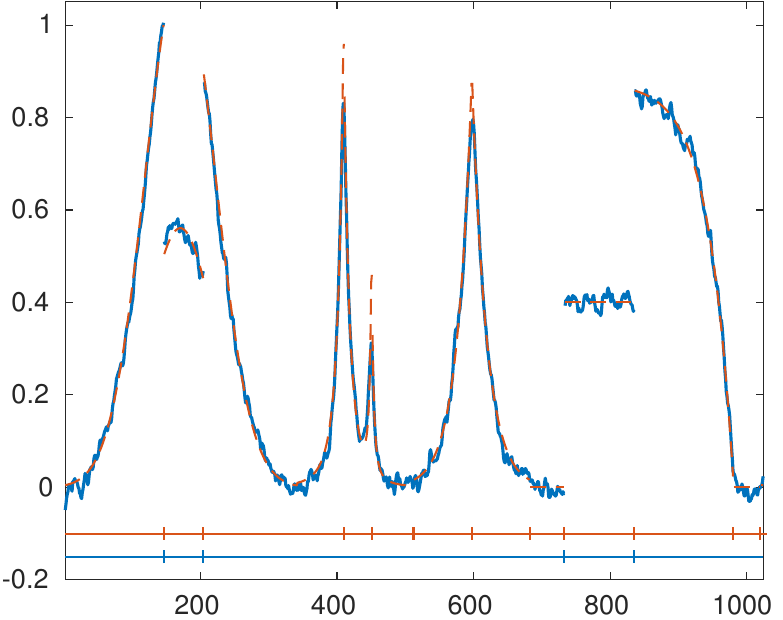}\\[0.5em]
\includegraphics[width=\textwidth]{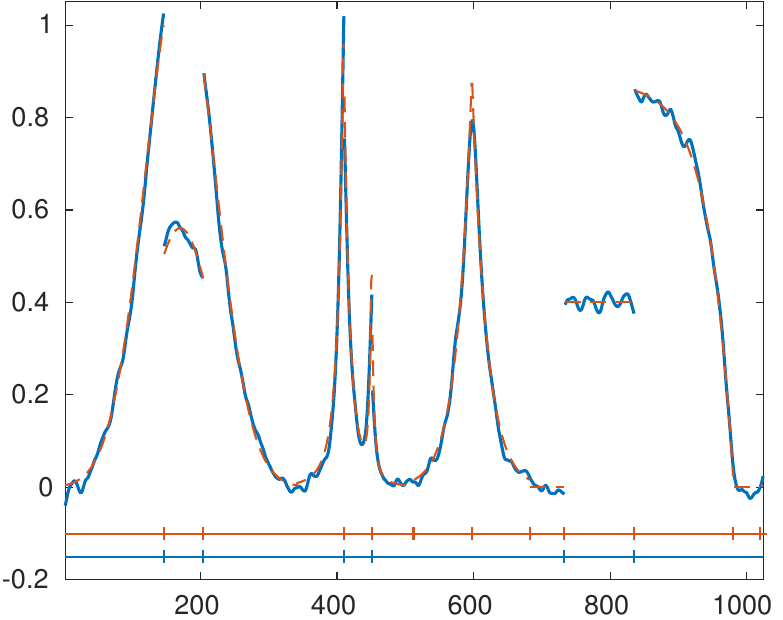}\\[0.5em]
\includegraphics[width=\textwidth]{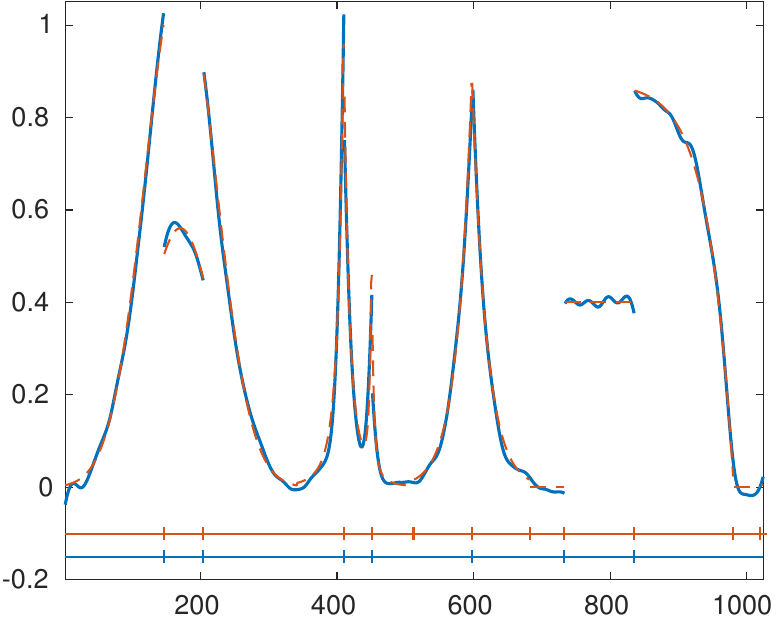}\\[0.5em]
	\includegraphics[width=\textwidth]{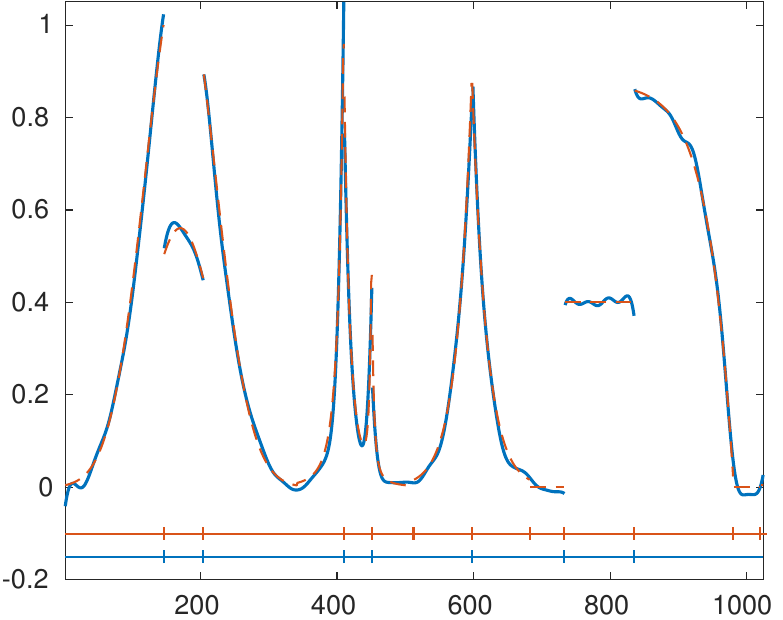}
\end{subfigure}
\begin{subfigure}{0.32\textwidth}
\subcaption*{$\eta=\protect\input{Experiments/Parameter_Choice/Elasticity_B/noise_lvl_3.tex}$}
\includegraphics[width=\textwidth]{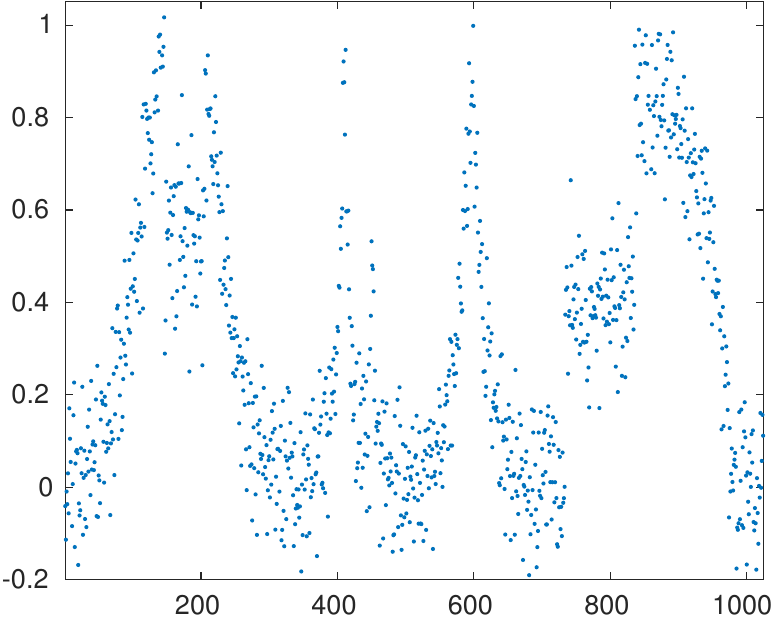}\\[0.5em]
\includegraphics[width=\textwidth]{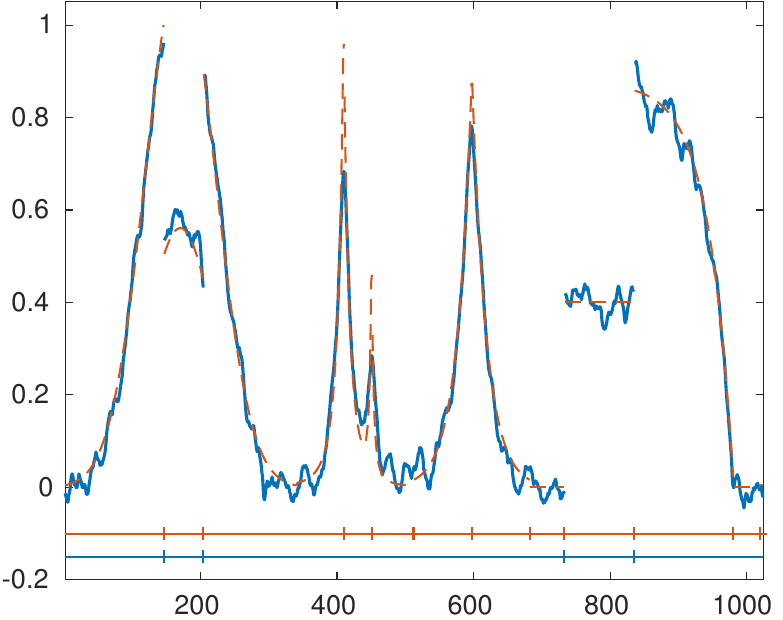}\\[0.5em]
\includegraphics[width=\textwidth]{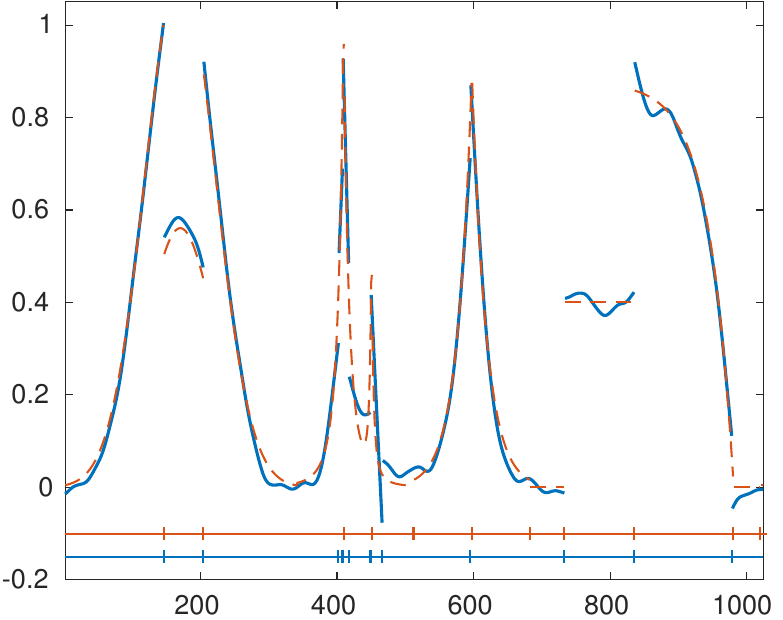}\\[0.5em]
\includegraphics[width=\textwidth]{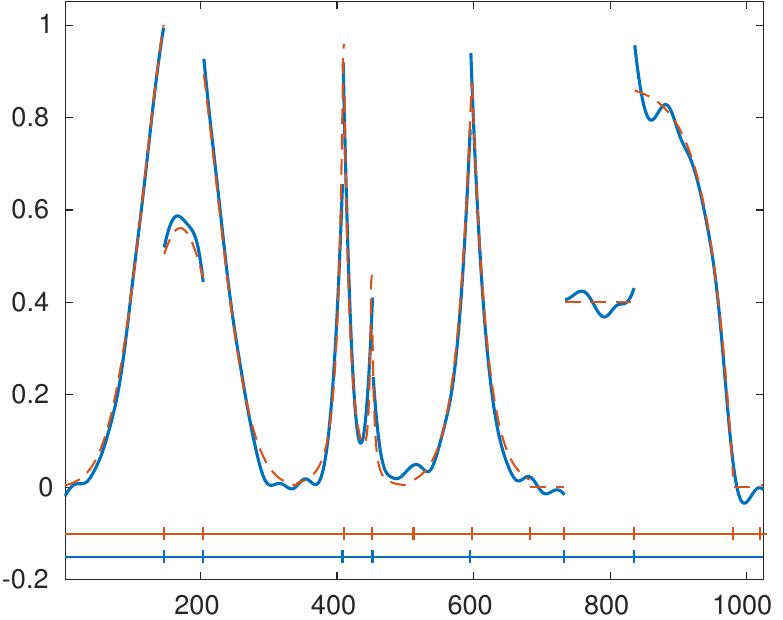}\\[0.5em]
\includegraphics[width=\textwidth]{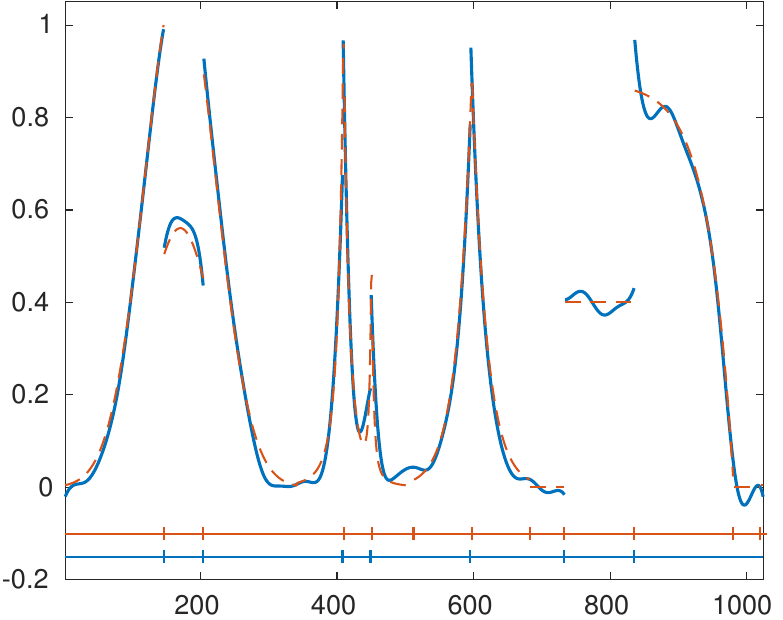}
\end{subfigure}
\caption{
\label{Fig Mallat Reconstructions}
Smoothing of piecewise defined signal 
%shown in 	\cref{Fig Mallat nlvl_1,Fig Mallat nlvl_2,Fig Mallat nlvl_3} 
of increasing noise level $\eta$ \emph{(top row)}.
We observe that the segmentation quality gets higher 
and that the noise is smoothed out better on the segments
when using higher order Mumford-Shah models.
%We observe that (\textbf{todo}) the higher order models yield better reconstruction results than the first order models. 
%Parameters were chosen empirically by means of the relative $\ell_2$-error.
%\textbf{Beobachtung: $k=5$ ist in allen noise levels das Optimum und höhere Ordnungen werden schlechter. Mit Aufnehmen? Wie visualisieren (Fehlergraph in separater Figure?)?}
%where best res the improvement saturates at around order $k=3.$
}
\end{figure}

We first investigate 
the potential of the higher order Mumford-Shah and Potts models
with respect to reconstruction quality.
We employ commonly used test signals with discontinuities; see \cite{donoho1994ideal, mallat2008wavelet}.
We corrupt the signals by additive zero mean Gaussian noise with variance $\sigma^2.$
We let the noise level $\eta$ be given by $\eta = \sigma N/ \|g\|_1,$ 
where $g$ denotes the clean signal.
To obtain a meaningful comparison
of the models' potentials,
we determined parameters $\beta$ and $\gamma$ 
such that the result $u^*$ has the best 
relative $\ell_2$-error $\varepsilon_\text{rel},$ given by
$
\varepsilon_\text{rel}=\|u^*-g \|_2 / \|g \|_2.
$
(We use a full grid search over 
$\gamma = (0,1]$
with stepsize $0.001,$ 
and $\beta  \in
(0,25]$ 
with stepsize $0.025$
and $\beta = \infty.$)
We are further interested in the quality of the computed partition $\Ic^*.$
A commonly used measure for  segmentation quality is the Rand index \cite{Rand1971}
which we briefly explain.
The Rand index $\RI$ of two partitions $\Ic, \Ic'$ is given by 
$
	\RI(\Ic, \Ic') = \binom{N}{2}\sum_{\{i,j:\, 1 \leq i < j \leq N\}} t_{ij}
$
where $t_{ij}$ is equal to one 
if there are  $I \in \Ic$ and $I' \in \Ic'$ such that $i$ and $j$ are in both $I$ and $I',$
or if $i$ is in both $I$ and $I'$ while $j$ is in neither $I$ and $I'.$ Otherwise, $t_{ij} = 0.$ Further, $N$ denotes the length of the signal.
The Rand index is bounded from above by one and a higher value means a better match.
A value of one means that $\Ic$ and $\Ic'$ agree.
Here, we report the Rand index $\RI$ of the computed segmentation and the ground truth segmentation.%
\footnote{For the numerical evaluation of the Rand index, we used the implementation of K.~Wang and D.~Corney available at the Matlab File Exchange.}
It is worth recalling that a high parameter $\beta$ 
leads to stronger smoothing on the segments, and that a high parameter $\gamma$ leads to less segments.

The first signal is a sinusoidal with two steps (Figure~\ref{Fig Heavy Sine}).
We observe that the first order model requires choosing a relatively
small $\beta$ parameter to avoid the 
gradient limit effect.
As tradeoff, the resulting signal remains visibly affected by the noise
and the second discontinuity is smoothed out.
Increasing the order $k$ to values greater than one leads to better results
with respect to the segmentation quality.
Furthermore, the relative error improves when increasing the order.
The reconstruction quality 
starts decaying from order $k=6$ on which can be attributed to overfitting.

The second example is a piecewise constant signal (Figure~\ref{Fig Blocks}).
As the signal has no variation on the segments,
the experiment confirms the intuition that large elasticity parameters 
-- mostly $\beta = \infty$ -- are preferable here. 
The best result is obtained by the first order Potts model
as its search space is restricted to piecewise constant functions
which perfectly matches the signal.
Yet, using higher order models leads to very good segmentation results up to order $k = 5$
and good reconstructions up to order $k = 3,$ as well.

\cref{Fig Mallat Reconstructions}
shows the reconstruction results for a piecewise smooth signal
for different noise levels.
We observe that the results of the first order model 
remain relatively noisy on the segments.
A reason for this is that the elasticity parameter needs to be relatively small
to prevent the gradient limit effect.
Using the second order model improves the reconstruction results significantly
but also tends to produce spurious segments, 
in particular at the parts of high curvature.
Increasing the order to $k \geq 3,$ 
leads to better segmentations and improved smoothing on the segments.

\interfootnotelinepenalty=10000
An example of the effect of higher order Mumford-Shah on real data times series is given in \cref{Fig:Windspeed}. The data are time-averaged (hourly) wind speeds at the summit of highest German mountain Zugspitze
from November to December 2016.\footnote{
The data were collected by German climate data center and are available via ftp at
	\url{
		ftp://ftp-cdc.dwd.de/pub/CDC/observations_germany/climate/hourly/wind/historical/
		}
(station id: 02115).
}
 We observe that strong changes of the windspeed result in breakpoints of the higher order Mumford-Shah estimate.
Some breakpoints can be associated with a meaning:
the break at 492 and the two breaks near 1154 and 1182
 can be linked with the days of strongest squalls in November and December 2016, respectively.%
\footnote{\emph{Monatsrückblick der Wetterwarte Garmisch-Partenkirchen/Zugspitze} at
\url{http://www.schneefernerhaus.de/fileadmin/web_data/bilder/pdf/MontasrueckblickeZG/MORZG1116.pdf} and at
\url{http://www.schneefernerhaus.de/fileadmin/web_data/bilder/pdf/MontasrueckblickeZG/MORZG1216.pdf}
}

% !TEX root = ../../higherOrderBZ.tex
\begin{figure}[!t]
	\begin{subfigure}{\textwidth}
		\includegraphics[width=\textwidth]{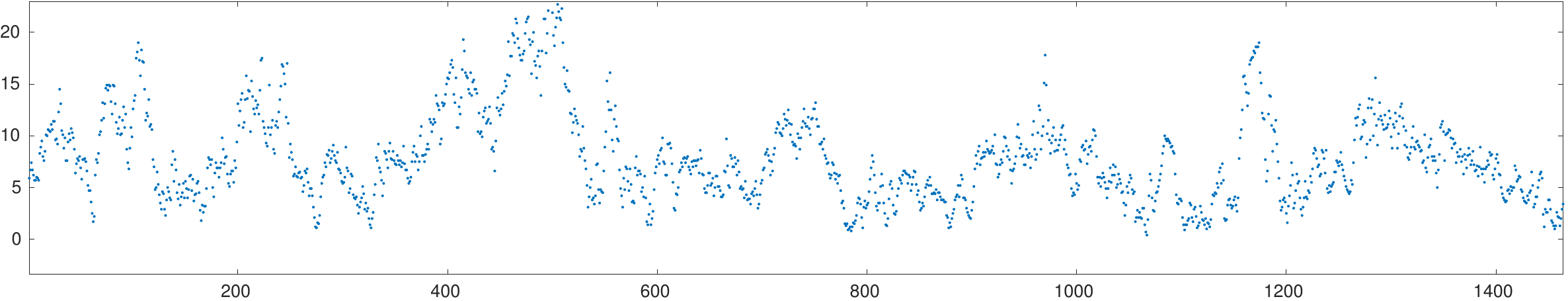}
		%\subcaption{Original data.}
	\end{subfigure}~\\[1em]
	\begin{subfigure}{\textwidth}
		\includegraphics[width=\textwidth]{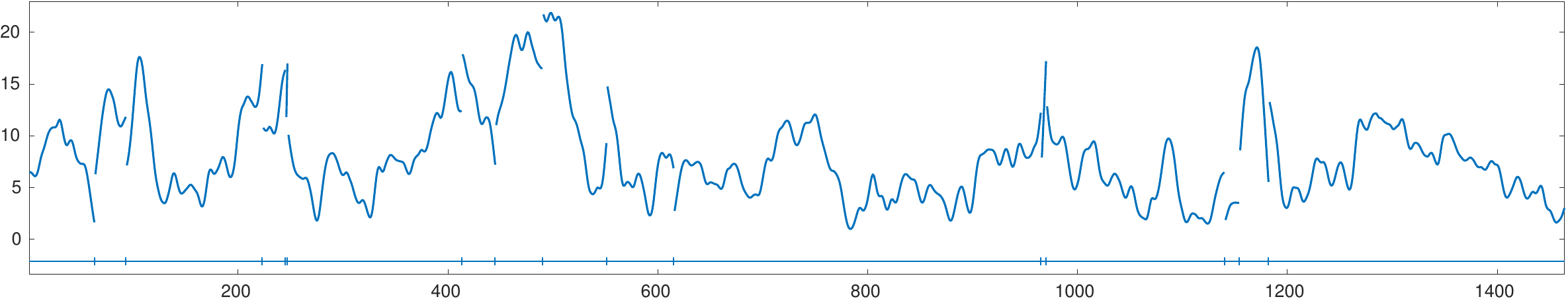}
		%\subcaption{Result of higher order Mumford-Shah model $(\Pc_{\protect\input{Experiments/Real_Data/k.tex};\protect\input{Experiments/Real_Data/beta.tex};\protect\input{Experiments/Real_Data/gamma.tex}}).$}
%$k=$ ,~
%			 $\beta=\protect\input{Experiments/Real_Data/beta.tex}$ ,~
%			$\gamma=\protect\input{Experiments/Real_Data/gamma.tex}$}
	\end{subfigure}
	%\\[1em]
%	\begin{subfigure}{\textwidth}
%		\includegraphics[width=\textwidth]{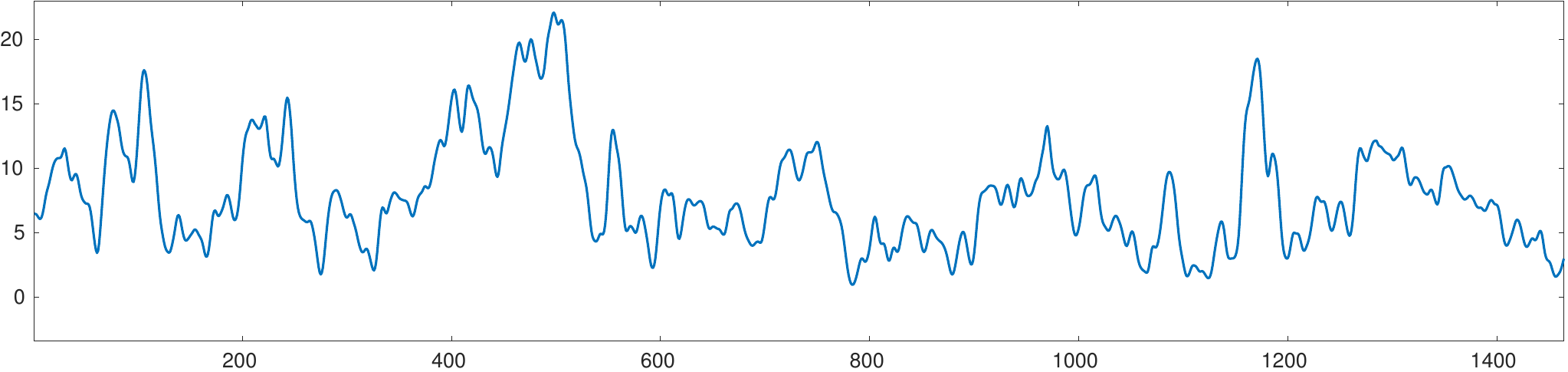}
%		%\subcaption{Second order spline, temp}
%	\end{subfigure}
\caption{\label{Fig:Windspeed}
\emph{Top:} Hourly averaged windspeeds [m/s] at the summit of the Zugspitze from November to December 2016.
\emph{Bottom:} Result of higher order Mumford-Shah model $(\Pc_{\protect\input{Experiments/Real_Data/k.tex};\protect\input{Experiments/Real_Data/beta.tex};\protect\input{Experiments/Real_Data/gamma.tex}}).$
%The piecewise smooth spline emphasizes changes within the data by imposing a changepoint, 
}
\end{figure}

\subsection{Computation time}
\label{Section Computing Time}

% !TEX root = ../../../higherOrderBZ.tex
\begin{figure}[]
	\centering
\begin{subfigure}[b]{0.49\textwidth}
\centering
	\small
	\begin{tabular}{r  r  r  r  r  }
\toprule
		\input{Experiments/Run_Time/Run_Time_B/Run_Times_BZ_k_Table.tex}
		\bottomrule
	\end{tabular}
\end{subfigure}
\begin{subfigure}[b]{0.49\textwidth}
\centering

		\small
		\begin{tabular}{ r  r  r  r  r  }
		\toprule
		\input{Experiments/Run_Time/Run_Time_B/Run_Times_Potts_k_Table.tex}
		\bottomrule
		\end{tabular}
\end{subfigure}
\\[1em]
\centering
\begin{subfigure}[b]{0.45\textwidth}
\centering
	\includegraphics[width=1\textwidth,height=0.725\textwidth]{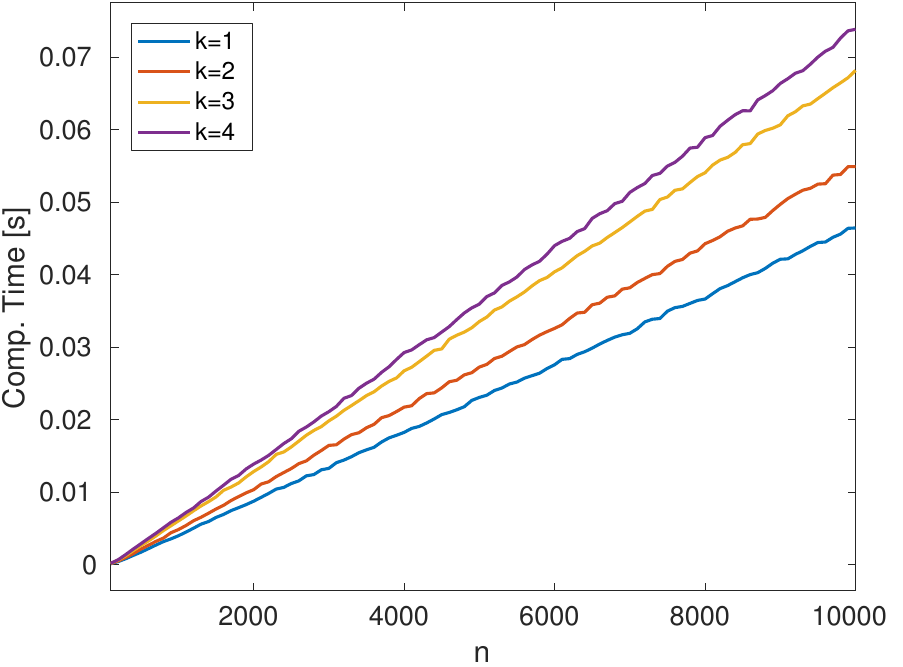}
	\includegraphics[width=1\textwidth,height=0.78\textwidth]{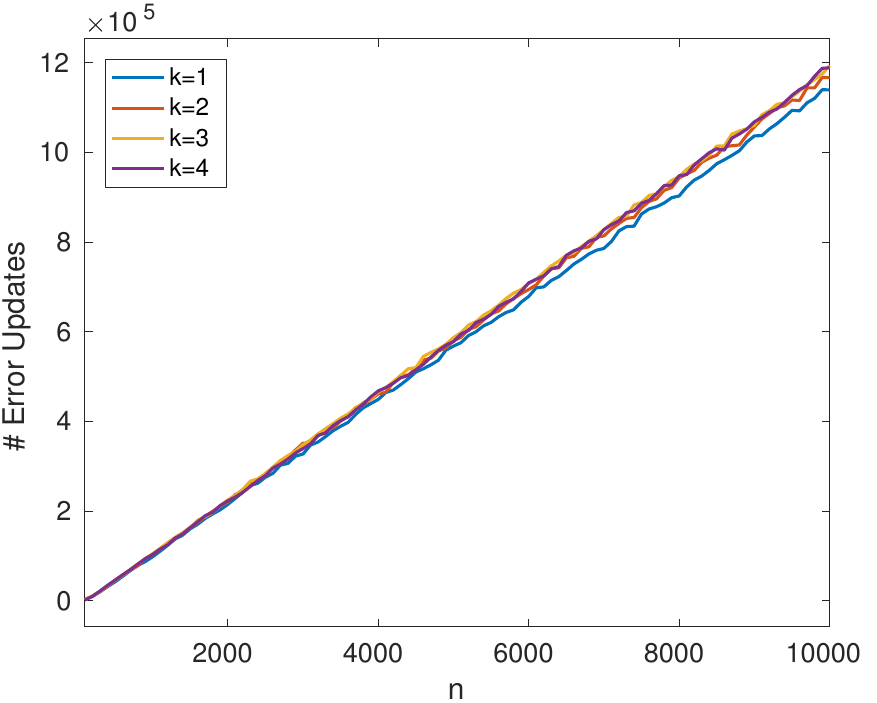}
		\subcaption{Higher order Mumford-Shah model.}	
\end{subfigure} \hspace{0.05\textwidth}
\begin{subfigure}[b]{0.45\textwidth}
\centering 
	\includegraphics[width=\textwidth]{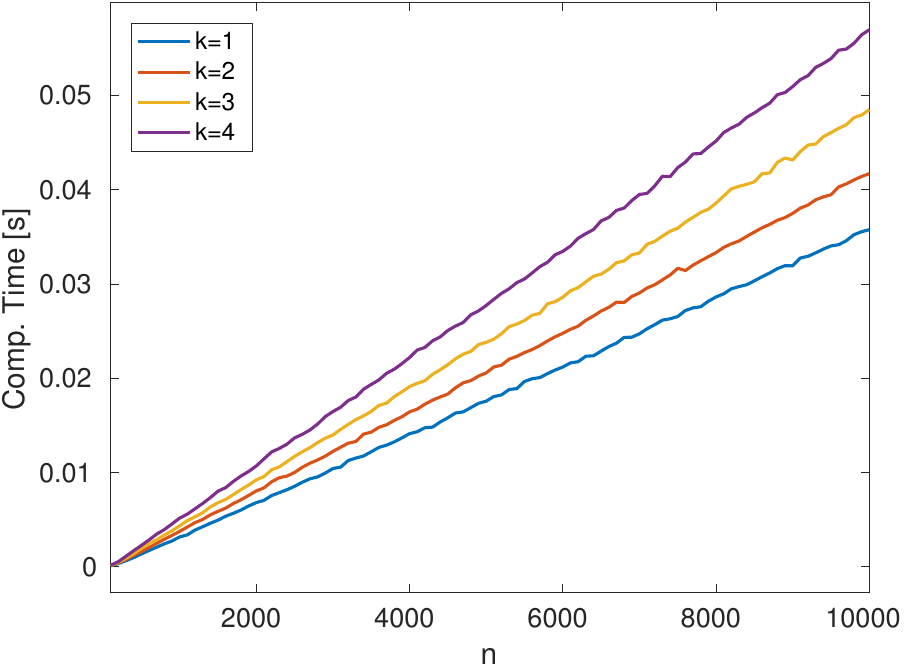}
	\includegraphics[width=1\textwidth]{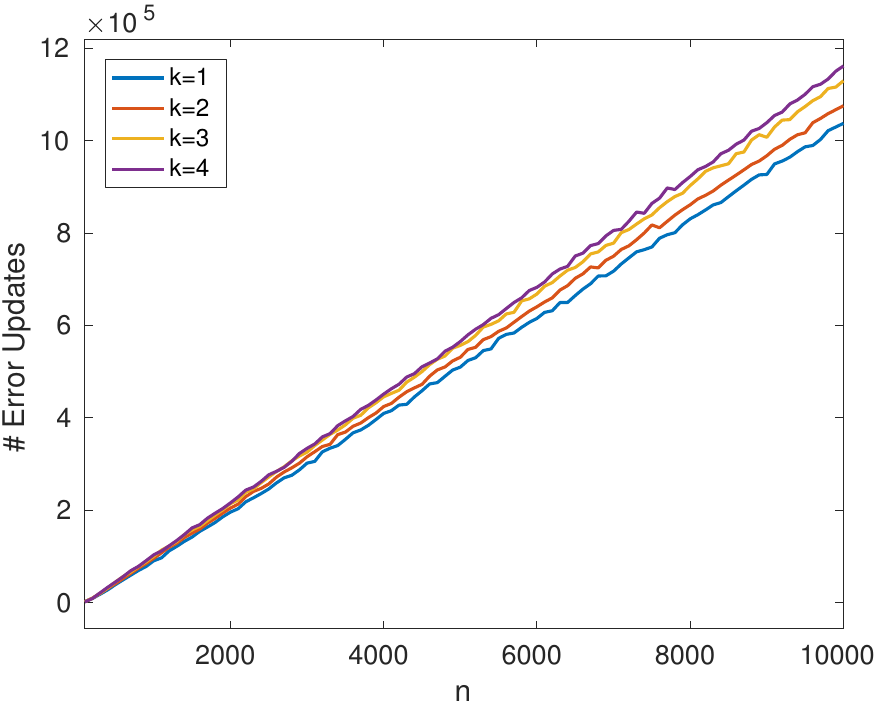}
		\subcaption{Higher order Potts model.}
\end{subfigure}~
\caption{
	\label{Figure_Experiments_RunTime_PwPoly}
	Computational costs of 
	the proposed algorithm
	for the randomly generated piecewise polynomial signals corrupted by Gaussian noise of level $\eta=\protect\input{Experiments/Run_Time/Run_Time_B/noise_lvl_Potts.tex}.$
	Computation times for selected lengths $N$ are tabulated \emph{(top)}
	and plotted for $N=100, 200, \ldots, 10000$ \emph{(center)}.
	We observe that the runtime only grows linearly in $N,$
	so much more favorable than the worst case scenario of quadratic growth.
	This means that the pruning strategies show their full effectiveness for this type of signals,
	which is reflected by the linear growth in the number of error updates \emph{(bottom).}
		}
\end{figure}

We investigate the computation time and the number of executed error updates
depending on the signal length $N.$ 
To this end, we generate two types of synthetic signals:
signals with increasing number of discontinuities 
and signals with constant number of discontinuities.

The signals are generated as follows.
For the first type we let for each $g_i, i= 1, \ldots, N,$ the probability of a jump discontinuity be
$p=
0.01$; that is, the length of each smooth segment $I$ of $g$ follows a geometric distribution with parameter $p$. Hence, the expected segment length is $1/p = 100,$ and the expected number of segments grows linearly with respect to $N$.
Within a segment $I$ the signal $g$ is polynomial of degree $k-1$ with coefficients generated by the random variables $\frac{1}{(j+1)^2} X_j$, $j=1,...,k$, where $X_j$ are i.i.d.~uniformly distributed on $[-1,1]$. For the length $h$ of $I$, the domain of $g_I$ is $[0,hp]$ sampled with step size $p$.
For spline order $k$, the degree of polynomials is set to $k-1$.
The second type of signals is created by taking $N$ equidistant samples
of the continuously defined signal shown in \cref{Fig Mallat Reconstructions}.
In all cases, the signals are corrupted by additive Gaussian noise with noise level 
$\eta=
0.1
$. For every considered $N$, we computed $
	\protect1000
	$ realizations and report the mean computation time and the mean number of performed error updates, respectively.

The results for the first type of
signals are shown in \cref{Figure_Experiments_RunTime_PwPoly}.
It is an important observation that 
the runtime and the errors updates
exhibit linear growth in the signal length.
Thus, the proposed algorithm 
does not show its worst case complexity.
That means, that the utilized pruning strategies are highly effective.
The results for the second type of
signals are shown in \cref{Table Mallat Times}.
In contrast to the first type,  
the computation time grows approximately quadratic 
in the number of elements,
which means that 
the proposed algorithm
attains its worst case complexity.
These results suggest that an increasing number of discontinuities 
is beneficial for the efficiency of 
the proposed algorithm.

% !TEX root = ../../../higherOrderBZ.tex
\begin{table}[]
\small
\begin{subfigure}{1\textwidth}
\centering
\begin{tabular}{ r r r r r r r r r r}
\toprule
	 &  &  &  &  & $N$ &  &  &  & \\ 
 $k$ & $2^{9}$& $2^{10}$& $2^{11}$& $2^{12}$& $2^{13}$& $2^{14}$& $2^{15}$& $2^{16}$& $2^{17}$\\ 
 \midrule    
 1  & 0.0025 & 0.0091 & 0.0363 & 0.1132 & 0.4230 & 1.6557 & 6.5899 & 28.8285 & 123.2784\\ 
 2  & 0.0025 & 0.0064 & 0.0210 & 0.0714 & 0.4875 & 1.9040 & 7.5987 & 33.0966 & 139.5842\\ 
 3  & 0.0030 & 0.0075 & 0.0196 & 0.0873 & 0.3435 & 2.3475 & 9.4270 & 40.4083 & 166.7910\\ 
 4  & 0.0032 & 0.0078 & 0.0569 & 0.0935 & 0.3685 & 1.6173 & 10.1401 & 44.7423 & 179.0001\\ 

\bottomrule
 \end{tabular}
 \subcaption{Runtime [s] for (higher order) Mumford-Shah solver}
\end{subfigure}
\\[1em]
\begin{subfigure}{1\textwidth}
\centering
\begin{tabular}{ r r r r r r r r r r}
\toprule 
	 &  &  &  &  & $N$ &  &  &  & \\ 
 $k$ & $2^{9}$& $2^{10}$& $2^{11}$& $2^{12}$& $2^{13}$& $2^{14}$& $2^{15}$& $2^{16}$& $2^{17}$\\ 
 \midrule    
 1  & 0.0010 & 0.0030 & 0.0055 & 0.0147 & 0.0502 & 0.1627 & 0.5713 & 2.0363 & 7.1102\\ 
 2  & 0.0016 & 0.0037 & 0.0090 & 0.0292 & 0.1036 & 0.3693 & 1.3562 & 4.7746 & 16.8562\\ 
 3  & 0.0019 & 0.0048 & 0.0116 & 0.0423 & 0.1613 & 0.6022 & 2.2298 & 8.2050 & 30.1515\\ 
 4  & 0.0024 & 0.0057 & 0.0172 & 0.0517 & 0.1977 & 0.7536 & 2.8576 & 10.8020 & 41.7345\\ 

\bottomrule
\end{tabular}
 \subcaption{Runtime [s] for (higher order) Potts solver}
\end{subfigure}
\caption{\label{Table Mallat Times}
Mean computation times of the proposed algorithm %Algorithm~\ref{alg:solverPenalized} 
(in seconds) for the signal from \cref{Fig Mallat Reconstructions} sampled on $N$ points
%on successively finer scales. 
and corrupted by Gaussian noise with noise level $\protect0.1.$
We observe that the runtime grows approximately quadratic in $N$;
that is, the worst case complexity is attained.
The relevant difference to the experiment in \cref{Figure_Experiments_RunTime_PwPoly}  is that the number 
of discontinuities does not increase with $N.$
Yet, the solver processes signals of size $2^{16}$ in less than one
minute.
%\emph{Top:} Mumford-Shah model of orders $k=1,...,4$.
%\emph{Bottom:} Potts model of orders $k=1,...,4$.
}
\end{table}

\section{Conclusion}

We have studied higher order Mumford-Shah and Potts models.
Their central advantage compared with classical first order models
is that they do not penalize polynomial trends of order $k-1$ on the segments.
This leads to improved estimation for data 
with piecewise  linear or polynomial trends.
We have shown that the defining functionals have unique minimizers 
for almost all input signals.
We have proposed a fast solver for higher order Mumford-Shah and Potts models.
We have obtain stability results.
We have shown that the worst case complexity 
of the proposed algorithm is quadratic in the length of the signal for arbitrary orders $k \geq 1.$ 
In the numerical experiments, we have further observed
that the runtime grows only linear for signals with 
linearly increasing number of discontinuities.
Further, the numerical experiments confirm the robustness and stability of the proposed method.
Our reference implementation processes even long signals in reasonable time;
for example signals of length $10,000$ need less than one second.
This way, the family of higher order Mumford-Shah and Potts models 
can serve as efficient smoothers for signals with discontinuities.

\section*{Acknowledgement}
This work was supported by the 
German Research Foundation (DFG grants STO1126/2-1 and \mbox{WE5886/4-1}).

\appendix
\section{Pseudocode of the proposed solver}
A pseudocode for the proposed
solver for \eqref{eq:penalizedProblem} is given in Algorithm~\ref{alg:solverPenalized}.

%%%%%%%%%%%%%%%%%%%%%%%%%%%%%%%%%%%%%%%%%%%%%%%%%%%%%%%%%%%%%%
\begin{algorithm}[p]
\small
\caption{Solver for the higher order Mumford-Shah problem and higher order Potts problem}
\label{alg:solverPenalized}
\SetCommentSty{footnotesize}
\DontPrintSemicolon
\BlankLine
\KwIn{Data $f \in \RN$; model parameters $k\in\N, \beta \in (0, \infty], \gamma > 0;$}
\BlankLine
\KwOut{Global minimizer $(u^*,\mathcal{I}^*)$ of
 \eqref{eq:penalizedProblem}
or \eqref{eq:penalizedProblemPoly}
}
\BlankLine
\tcc{Precomputations}
Row-wise transform the matrix
$\quad\begin{cases}
\eqref{eq:penalizedProblem}:&\text{ $A$ from \eqref{eq:AmatrixGvector}} \\
\eqref{eq:penalizedProblemPoly}:&\text{ $B$ from \eqref{eq:systemMatrixPoly}}
\end{cases}
$
\newline
to upper triangular form  using successive Givens rotations and store the rotation angles in $\Theta$.

\BlankLine
Compute $\Ec^{\dint{1}{r}}$ for all $r=1,\ldots,N$ with $\Theta$\\
\BlankLine
\tcc{Find optimal changepoints}
Initialize lists $L = [2]$, $R=[2]$, $E = [0]$\\
 $J_1 \leftarrow 0$, $P^*_1 \leftarrow 0$\\	
\For{$r\leftarrow 2,\ldots, N$}{
	\tcc{Initialization}
	$J_r \leftarrow 0$,
	$P_r^* \leftarrow \Ec^{\dint{1}{r}}$
\\
	\tcc{Find optimal $P_r^*$ using \eqref{eq:recurrencePenalized}}
	\For{$i=1,..., \text{length of }L$}
	{
		\nllabel{alg:GoToDestination}
		\While{$r_i < r$}{
			\tcc{Update approximation error
				}
			$\begin{cases}
			\eqref{eq:penalizedProblem}:
			\text{
				Compute $\Ec^{l_i:r_i+1}$ from $\Ec^{l_i:r_i}$ using the recurrence \eqref{eq:recurrence_q}-\eqref{eq:ErrorUpdate_new}
			}
			\\
			\eqref{eq:penalizedProblemPoly}:
			\text{
				Compute $\Ec^{l_i:r_i+1}$ from $\Ec^{l_i:r_i}$ using the recurrence \eqref{eq:recurrence_q_Potts}-\eqref{eq:ErrorUpdatePoly}
			}
			\end{cases}$~\\
			$E_i \leftarrow \Ec^{\dint{l_i}{r_{i}+1}}$,\\ 
			$r_i \leftarrow r_{i}+1$ \\
			\tcc{Pruning \eqref{eq:KFpruning}}
			\If{$P^*_{l_i-1}+E_i\geq P_{r_i}^*$}
			{
				Delete: $l_i$ from $L$, $r_i$ from $R$ and $E_i$ from $E$
				\\
				\textbf{go to} \ref{alg:GoToDestination}
			}
		}
		$b\leftarrow P^*_{l_i-1} + \gamma + E_i$\\
		\If{$b\leq P_r^*$}
		{
			$P_r^*\leftarrow b$, 
			$J_r \leftarrow l-1$	
		}
		\tcc{Pruning \eqref{eq:AMpruning}}
		\textbf{If }$E_i+\gamma > P^*_r$ \textbf{then break end}
	}
	\tcc{Update lists}
	Prepend: $r+1$ to $L$,
	$r+1$ to $R$, $0$ to $E$
}
\BlankLine
\tcc{Recover partition $\mathcal{I}^*$ from segment boundary locations $J$}
$r\leftarrow N$, $\mathcal{I}^* \leftarrow \emptyset$\\
\While{$r>0$}{
	$l\leftarrow J(r)+1$, 
	$\mathcal{I}^* \leftarrow \mathcal{I}^*\cup \{ (l:r)  \} $,
	$r \leftarrow l-1$
}
\BlankLine
\tcc{Reconstruction of $u^*$ by solving linear systems on segments (using QR decomposition and reusing $\Theta$ for speedup)}
\For{$I\in\mathcal{I^*}$}
{
	$\begin{cases}
	\text{
	\eqref{eq:penalizedProblem}: Solve 
	$u_I^* = \argmin\limits_{v \in \R^{|I|}} \|  v - f_I \|_2^2 +    \beta^{2k} \| \nabla^k v \|_2^2$ 
	}\\
	\text{
	\eqref{eq:penalizedProblemPoly}: Solve 
	$u_I^* = \argmin
	\|  v - f_I \|_2^2$ 
	such that $v$ is polynomial of degree $\leq k-1$ on $I$ 	}
	\end{cases}$
}
\end{algorithm}
%%%%%%%%%%%%%%%%%%%%%%%%%%%%%%%%%%%%%%%%%%%%%%%%%%%%%%%%%%%%%%

{\footnotesize
\bibliographystyle{myplainnat}
\bibliography{higherOrderBZ}
}

\end{document}